\newtheorem{thm}{Theorem}[section]
\newtheorem{lem}[thm]{Lemma}
\newtheorem{cor}[thm]{Corollary}
\newtheorem{prop}[thm]{Proposition}
\newtheorem{claim}[thm]{Claim}
\theoremstyle{definition}
\newtheorem{definition}[thm]{Definition}
\newtheorem{say}[thm]{}
\newtheorem{question}[thm]{Question}
\newtheorem{exmple}[thm]{Example}
\theoremstyle{remark}
\newtheorem{remark}[thm]{Remark}
\numberwithin{equation}{section}
\newcommand{\bQ}{{\mathbb Q}}
\newcommand{\bP}{{\mathbb P}}
\newcommand{\roundup}[1]{\lceil{#1}\rceil}
\newcommand{\rounddown}[1]{\lfloor{#1}\rfloor}
\newcommand\lra{\longrightarrow}
\newcommand\OO{{\mathcal{O}}}
\newcommand\FF{{\mathcal{F}}}
\newcommand\HH{{\mathcal{H}}}
\newcommand\GG{{\mathcal{G}}}
\newcommand\Supp{{\text{\rm Supp}}}
\newcommand\lct{{\text{\rm lct}}}
\newcommand\glct{{\text{\rm glct}}}
\newcommand\coeff{{\text{\rm coeff}}}
\newcommand\mult{{\text{\rm mult}}}
\newcommand\Bs{{\text{\rm Bs}}}
\newcommand\vol{\text{\rm vol}}
\newcommand\Mov{{\text{\rm Mov}}}
\newcommand\bF{{\mathbb{F}}}
\newcommand{\q}[0]{{\mathbb Q}}
\newcommand{\cc}[0]{{\mathbb C}}
\newcommand{\GL}{\mathrm{GL}}
\newcommand{\SL}{\mathrm{SL}}
\newcommand{\rdown}[1]{\lfloor{#1}\rfloor}
\newcommand{\onto}[0]{\twoheadrightarrow}
\newcommand{\simr}[0]{\sim_{\r}}
\renewcommand{\r}[0]{{\mathbb R}}
\newcommand{\p}[0]{{\mathbb P}}
\newcommand{\qtq}[1]{\quad\mbox{#1}\quad}
\title{The Noether inequality for algebraic threefolds}
\author{Jungkai A. Chen, Meng Chen, Chen Jiang\\
(W\MakeLowercase{ith an} A\MakeLowercase{ppendix by} J\MakeLowercase{\'{a}nos} K\MakeLowercase{oll\'{a}r})}
\address{\rm National Center for Theoretical Sciences, and Department of Mathematics, National Taiwan University, Taipei, 106, Taiwan}
\email{jkchen@ntu.edu.tw}
\address{\rm School of Mathematical Sciences, Fudan University,
Shanghai 200433, People's Republic of China}
\email{mchen@fudan.edu.cn}
\address{\rm Shanghai Center for Mathematical Sciences, Fudan University, Jiangwan Campus, 2005 Songhu Road, Shanghai, 200438, People's Republic of China}
\email{chenjiang@fudan.edu.cn}
\thanks{The first author was partially supported by Ministry of Science and Technology and
National Center for Theoretical Sciences of Taiwan. The second author was
supported by National Natural Science Foundation of China (\#11571076, \#11731004) and Program of Shanghai Subject Chief Scientist (\#16XD1400400). The third author was supported by JSPS KAKENHI Grant Number JP16K17558 and World Premier International Research Center Initiative (WPI), MEXT, Japan.}
\begin{document}
\numberwithin{equation}{section}
\begin{abstract} We establish the Noether inequality for projective $3$-folds. More precisely, we prove that the inequality
$$\vol(X)\geq \tfrac{4}{3}p_g(X)-{\tfrac{10}{3}}$$ holds for all projective $3$-folds $X$ of general type with either $p_g(X)\leq 4$ or $p_g(X)\geq 21$, where $p_g(X)$ is the geometric genus and $\vol(X)$ is the canonical volume. 
This inequality is optimal due to known examples found by M. Kobayashi in 1992.
\end{abstract}

\maketitle

\pagestyle{myheadings} \markboth{\hfill J. A. Chen, M. Chen, and C. Jiang
\hfill}{\hfill The Noether inequality for algebraic $3$-folds\hfill}

\section{\bf Introduction}
One main goal of algebraic geometry is to classify algebraic varieties. In birational geometry, a fundamental task is to disclose the exact relation among birational invariants of a given variety. Such kind of strategy is usually referred to as ``algebro-geometric geography'', which may have broader meaning.

We are interested in the relation between two essential birational invariants: the geometric genus and the canonical volume.
For a projective variety $Y$ of dimension $n$, the {\it geometric genus} $p_g(Y)$ is defined by $$p_g(Y):=h^0(Y', K_{Y'})$$ and the {\it canonical volume} $\vol(Y)$ is defined by
$$\vol(Y):=\lim_{m\to \infty}\frac{h^0(Y', mK_{Y'})}{m^n/n!}$$
where $Y'$ is a resolution of $Y$ and $K_{Y'}$ is the canonical divisor. These definitions do not depend on the choice of resolutions because for each integer $m\geq 0$, $h^0(X, mK)$ is a birational invariant in the category of smooth projective varieties (or more generally, the category of normal projective varieties with at worst canonical singularities).
It is also known that $\vol(Y)=K^n_Y$ when $Y$ is a normal projective variety of dimension $n$ with at worst canonical singularities and with nef $K_Y$.
A projective variety is {\it of general type} if it has positive canonical volume.

In this paper, we investigate the so-called ``Noether inequality'' for projective varieties of general type, which describes the lower bound of the canonical volume in terms of the geometric genus. For example, for a complete algebraic curve $C$ of general type, one has $$\vol(C)=2p_g(C)-2$$ simply by Riemann--Roch formula.
In dimension $2$, M. Noether \cite{Noether} proved in 1875 that, for any minimal projective surface $S$ of general type,
$$K_S^2\geq 2p_g(S)-4,$$
or equivalently, for any projective surface $T$, $$\vol(T)\geq 2p_g(T)-4,$$ which is known as the Noether inequality. As one knows, the Noether inequality is a milestone in the history of the surface theory.

Motivated by the study of explicit birational geometry of $3$-folds, one naturally asks for the 3-dimensional analogue of the Noether inequality.
For a projective $3$-fold $X$, to consider the relation between $\vol(X)$ and $p_g(X)$, we may always replace $X$ with one of its minimal models by virtue of 3-dimensional minimal model program (see, for instance, \cite{KMM} and \cite{KM}). Namely we may assume that $X$ is a {\it minimal projective $3$-fold}, i.e., $X$ is a normal projective $3$-fold with at worst $\bQ$-factorial terminal singularities and with nef $K_X$, under which one has $\vol(X)=K_X^3$. Let us briefly recall the history of the Noether inequality problem for $3$-folds:
\begin{enumerate}

\item In 1992, Kobayashi \cite{Kob92} constructed a series of smooth canonically polarized $3$-folds $X$ satisfying $K_X^3=\frac{4}{3}p_g(X)-\frac{10}{3}$. Later in 2017, Chen and Hu \cite{CH17} generalized Kobayshi's method and constructed more series of examples of smooth canonically polarized $3$-folds satisfying the same equality.

\item In 2004, the second author \cite{MChen04MRL} proved the inequality $K_X^3\geq \frac{4}{3}p_g(X)-\frac{10}{3}$ for smooth canonically polarized $3$-folds.

\item In 2006, Catanese, Zhang, and the second author \cite{CCZ06} proved the same inequality for smooth minimal projective $3$-folds of general type.

\item In 2015, the first two authors \cite{CC15} proved the same inequality for Gorenstein minimal projective $3$-folds of general type.

\item The second author also proved that the same inequality holds when $p_g(X)\leq 4$ \cite[Theorem 1.5]{MChen07}.
\end{enumerate}

Unfortunately the known methods are less effective in treating non-Gorenstein 3-folds. In fact, searching for the Noether inequality for arbitrary 3-folds of general type has been a considerably challenging open question.
The aim of this paper is to establish the optimal Noether inequality for ``almost all" projective $3$-folds (see Remark \ref{rem 1}).

Here is our main theorem:
\begin{thm}\label{main} Let $X$ be a projective $3$-fold of general type and either $p_g(X)\leq 4$ or $p_g(X)\geq 21$. Then the inequality
\begin{equation}\vol(X)\geq \frac{4}{3}p_g(X)-{\frac{10}{3}}\label{N ineq}\end{equation}
holds.
\end{thm}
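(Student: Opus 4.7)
Since the case $p_g(X)\le 4$ is already treated in \cite{MChen07}, I focus on $p_g(X)\ge 21$. My first step is to use the 3-dimensional minimal model program to replace $X$ by a minimal projective 3-fold with at worst $\bQ$-factorial terminal singularities, so that $\vol(X)=K_X^3$. I then fix a terminal resolution $\pi\colon X'\to X$, decompose $|K_{X'}|=|M|+F$ with $|M|$ the moving part, and let $\varphi\colon X'\dashrightarrow W\subset \bP^{p_g(X)-1}$ denote the canonical map. The analysis splits into cases according to $d:=\dim W\in\{1,2,3\}$.

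When $d=3$, the map $\varphi$ is generically finite onto a non-degenerate subvariety of $\bP^{p_g(X)-1}$, and standard degree bounds combined with $K_X^3\ge (\pi^*K_X)^2\cdot M$ already give a linear estimate stronger than \eqref{N ineq} for $p_g(X)\ge 4$. When $d=2$, a general member $M_0\in|M|$ is an irreducible surface on $X'$; from $K_X^3\ge (\pi^*K_X)^2\cdot M_0$ together with the restriction of $|M|$ to $M_0$ (which yields a linear system of dimension $\ge p_g(X)-2$ of ``canonical type''), the classical Noether inequality applied to $M_0$ produces a bound with leading coefficient $\ge 2$, again much stronger than the target.

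The essential case is the canonical pencil case $d=1$. Here the Stein factorization of $\varphi$ is a fibration $f\colon X'\to B$ over a smooth curve whose general fiber $S$ is a smooth irreducible surface of general type, and $|M|$ is composed with this pencil. A cohomological count on $B$ forces $g(B)$ small and, after handling minor subcases, one reduces to $B\cong\bP^1$ with $M\sim aS$ and $a\ge p_g(X)-1$. Since $\pi^*K_X$ is nef, intersection theory yields
\[
K_X^3 \;\ge\; a\cdot(\pi^*K_X)^2\cdot S \;\ge\; a\cdot\vol\!\bigl((\pi^*K_X)|_S\bigr).
\]
The remaining task is to bound $\vol((\pi^*K_X)|_S)$ from below, which via the adjunction $K_{X'}|_S=K_S-S|_S$ and the comparison $\pi^*K_X\le K_{X'}$ is linked to $\vol(S)$; combining the surface Noether inequality $\vol(S)\ge 2p_g(S)-4$ with a lower bound for $p_g(S)$ of the shape $p_g(S)\gtrsim \frac{p_g(X)+c}{a}$, extracted from restricting $|K_{X'}|$ to $S$, closes the chain.

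I expect the main obstacle to be precisely the presence of non-Gorenstein terminal points of $X$, which is where the Gorenstein-only arguments of \cite{CCZ06,CC15} break down. In the Gorenstein setting, the identity $K_X|_S=K_S-S|_S$ gives a clean adjunction relating $K_X^3\cdot(\text{pencil})$ to $K_S^2$; in general one must control the defect $K_{X'}|_S-\pi^*K_X|_S$ coming from singular points of $X$ and bring in Reid's singular Riemann--Roch together with plurigenus/basket estimates in order to keep the final arithmetic linear. The numerical threshold $p_g(X)\ge 21$ should emerge naturally as the smallest value above which these basket correction terms are dominated by the principal linear contributions; in the intermediate range $5\le p_g(X)\le 20$ the arithmetic genuinely degenerates, which is exactly why the theorem carves out two separate regimes of $p_g(X)$.
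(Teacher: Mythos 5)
There are several genuine gaps, and the $p_g(X)\geq 21$ threshold is attributed to the wrong mechanism. First, your claim that the $d=2$ case produces ``a bound with leading coefficient $\geq 2$'' is false: when the restricted canonical map on the surface $S\in|M|$ gives a fibration by genus-$2$ curves, the paper only reaches $K_X^3\geq \frac{4}{3}p_g(X)-\frac{10}{3}$, so this subcase already saturates the target coefficient $\frac{4}{3}$ rather than beating it; it needs a new surface inequality (Proposition~\ref{vol S>8/3}), not the classical surface Noether inequality. Second, in the pencil case $d=1$ the chain $K_X^3\geq a\cdot\vol((\pi^*K_X)|_S)$ combined with $\vol(S)\geq 2p_g(S)-4$ yields nothing when $S$ is a $(1,1)$- or $(1,2)$-surface, since then $\vol(S)\in\{1,2\}$ and $p_g(S)\in\{1,2\}$ is fixed by the fiber type; there is no lower bound of the form $p_g(S)\gtrsim p_g(X)/a$. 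These are precisely the hard cases the paper isolates: pencils of $(1,1)$-surfaces (Theorem~\ref{1,1}, via $|2K_X|$ and the degree-$4$ bicanonical map of the fiber) and pencils of $(1,2)$-surfaces (Theorem~\ref{last}), the latter being the bottleneck of the whole paper. Your argument does not touch either.

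Third, and most importantly, your explanation of the threshold $p_g(X)\geq 21$ is not the paper's. You speculate that it arises from Reid's singular Riemann--Roch and basket corrections being dominated by principal terms; in fact, no basket arithmetic enters at all. The threshold arises in the case of a rational pencil of $(1,2)$-surfaces, where one must first ensure that $\Mov|K_X|$ is base point free on a suitable minimal model. This is achieved by a connectedness argument (Proposition~\ref{pencil bound}) showing that a $\pi$-exceptional divisor positive on the fiber would force the global log canonical threshold of the minimal fiber $F_0$ to satisfy $p_g(X)-1\leq 2/\glct(F_0)$; combining this with Koll\'ar's bound $\glct(F_0)\geq \frac{1}{10}$ for $(1,2)$-surfaces (Theorem~\ref{12alpha new}) gives $p_g(X)\leq 21$, so $p_g(X)\geq 21$ (with the strictness refinement) excludes the bad configuration. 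You also do not use the other two essential tools: the Kawamata extension comparison (Corollary~\ref{KXKS}), which replaces the hard-to-control adjunction defect $K_{X'}|_S-\pi^*K_X|_S$ by a clean $\bQ$-effectivity statement $\pi^*(K_X+\lambda D)|_S-\sigma^*K_{S_0}\geq_{\bQ}0$ independent of the resolution; and the weak positivity of direct images (Theorem~\ref{thm wp}) used in Step~2 of Theorem~\ref{last} to upgrade the preliminary bound $K_X^3\geq \frac{4}{3}p_g(X)-\frac{14}{3}$ to the required $\frac{4}{3}p_g(X)-\frac{10}{3}$. Without these, the sketch cannot be completed.
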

 The inequality is {optimal} due to examples constructed by Kobayashi \cite{Kob92} and Chen--Hu \cite{CH17}.

In order to show Theorem \ref{main}, firstly we may assume that $X$ is minimal.  We may always assume that $p_g(X)\geq 3$ since it is clear otherwise. Then we can consider the map $\varphi_1=\Phi_{|K_X|}$ defined by the canonical linear system $|K_X|$ and discuss on the {\it canonical dimension} $d_X:=\dim \overline{\varphi_1(X)}$. Recall that a {\it $(1, 2)$-surface} is a smooth projective surface $S$ of general type with $\vol(S)=1$ and $p_g(S)=2$. In fact, we prove the following theorem:

\begin{thm}\label{main2} Let $X$ be a minimal projective $3$-fold of general type. Assume that one of the following holds:
\begin{enumerate}
\item $d_X\geq 2$; or
\item $d_X=1$ and $|K_X|$ is not composed with a rational pencil of $(1,2)$-surfaces; or
\item $d_X=1$, $|K_X|$ is composed with a rational pencil of $(1,2)$-surfaces, and either $p_g(X)\leq 4$ or $p_g(X)\geq 21$.
\end{enumerate}
Then Inequality \eqref{N ineq} holds.
\end{thm}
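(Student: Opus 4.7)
The plan is to split the analysis according to the three cases in the theorem. Throughout, after reducing to $X$ minimal with $p_g(X)\geq 3$, fix a resolution $\pi:Y\to X$ on which the movable part $|M|$ of $|\pi^* K_X|$ is base-point free, and write $\pi^* K_X = M + Z$ with $Z\geq 0$ the fixed part. Then $\vol(X)=(\pi^* K_X)^3$ and $h^0(Y,M)=p_g(X)$. When $|M|$ is composed with a pencil over $\bP^1$ one has $M=aF$ with $F$ a general smooth fiber and $a\geq p_g(X)-1$.

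\emph{Case (1): $d_X\geq 2$.} A general member $S\in|M|$ (cutting further with a general member of $|M|$ when $d_X=3$) is an irreducible smooth surface, and $L:=(\pi^* K_X)|_S$ is nef with $h^0(S,L)\geq p_g(X)-1$. Combining the intersection-theoretic inequality $\vol(X)\geq (\pi^* K_X)^2\cdot S\geq L^2$ with a Noether-type estimate on $L$ applied to the minimal model of $S$, in the spirit of the arguments in \cite{CC15,CCZ06}, yields $\vol(X)\geq 2p_g(X)-c$ for a small absolute constant $c$, which implies \eqref{N ineq}.

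\emph{Case (2): $d_X=1$ with fibers not $(1,2)$-surfaces.} Let $L_F:=(\pi^* K_X)|_F$, so that $\vol(X)\geq a\cdot (L_F)^2$. Since the minimal model $F_0$ of the smooth general fiber $F$ is a surface of general type that is not a $(1,2)$-surface, the Noether inequality for surfaces together with the classification of minimal surfaces with small $p_g$ forces $K_{F_0}^2\geq 2$. A standard comparison between $L_F$ and the pullback of $K_{F_0}$, via the minimal model map of $F$, propagates this to $(L_F)^2\geq 2$, whence $\vol(X)\geq 2(p_g(X)-1)$, which is strictly stronger than \eqref{N ineq}.

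\emph{Case (3): $d_X=1$ with $(1,2)$-fibers.} The sub-case $p_g(X)\leq 4$ is \cite[Theorem 1.5]{MChen07}. For $p_g(X)\geq 21$ we have $a\geq 20$ and $K_{F_0}^2=1$, so the naive estimate $\vol(X)\geq a(L_F)^2\geq a=p_g(X)-1$ falls short of the desired bound $\tfrac{4}{3}a-2$. The plan is to exploit the very rigid structure of a $(1,2)$-surface---its canonical system is a pencil with a unique base point and a single canonical curve $\Gamma$ with $\Gamma^2=1$---together with the abundance of sections provided by $p_g(X)\geq 21$. Concretely, I would decompose
\[\vol(X)=a\,(L_F)^2 + a\,(Z|_F\cdot L_F) + Z^2\cdot \pi^* K_X\]
and use $Z|_F=L_F$ (because $F|_F=0$) to organize the estimate, then bring in a finer pluricanonical analysis: the restrictions to $F$ of sections of $|mK_X|$ for small $m$ produce an auxiliary sub-pencil on $F$ whose interaction with $\Gamma$ forces $L_F\cdot \Gamma$ to grow, hence $(L_F)^2$ to exceed $1$. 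Alternatively, a Xiao / Cornalba--Harris-type slope inequality for the induced fibration $Y\to\bP^1$, together with weak positivity of $f_*\omega_{Y/\bP^1}^{\otimes m}$, should yield an asymptotic bound $(L_F)^2\geq \tfrac{4}{3}-O(1/a)$. The explicit numerical threshold $p_g(X)\geq 21$ enters precisely to absorb these $O(1/a)$ error terms and round the estimate up to $\tfrac{4}{3}p_g(X)-\tfrac{10}{3}$; this quantitative push---squeezing the sharp coefficient $\tfrac{4}{3}$ out of the naive coefficient $1$ in the $(1,2)$-fibered setting---is the main obstacle of the argument and is exactly the step missing from the earlier Gorenstein and small-$p_g$ treatments.
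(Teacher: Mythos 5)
Your proposal is too coarse and misses the difficult subcases in each of the three branches; several of its specific numerical claims are in fact false.

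In Case~(1), the claim that the $d_X\geq 2$ cases all yield $\vol(X)\geq 2p_g(X)-c$ is wrong: when $d_X=2$ and $|K_X|$ gives a fibration of curves of genus $2$, one can only obtain the sharp bound $\frac{4}{3}p_g(X)-\frac{10}{3}$. This subcase is one of the main obstructions in the problem; the paper handles it by proving a dedicated inequality $K_{S_0}^2\geq \frac{8}{3}(n-1)$ for surfaces admitting a genus-$2$ fibration with $K_S\equiv nC+G$ (Proposition~\ref{vol S>8/3}), and then comparing $\pi^*K_X|_S$ with $\sigma^*K_{S_0}$ via the extension-theorem corollary (Corollary~\ref{KXKS}). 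Your sketch ``combine with a Noether-type estimate in the spirit of \cite{CC15,CCZ06}'' does not identify, let alone resolve, this subcase.

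In Case~(2), the assertion ``not a $(1,2)$-surface implies $K_{F_0}^2\geq 2$'' is simply false: a $(1,1)$-surface has $K_{F_0}^2=1$, and the hypothesis of Theorem~\ref{main2}(2) also allows $|K_X|$ to be composed with an \emph{irrational} pencil of $(1,2)$-surfaces, again with $K_{F_0}^2=1$. The $(1,1)$-surface case is the content of Theorem~\ref{1,1} and requires a genuinely different argument: one passes to $|2K_X|$, uses $q(F)=0$ and the plurigenus formula to lower-bound $P_2(X)$, then splits according to $\dim\Phi_{|2K_W|}(F)$, exploiting that $\varphi_{2,F}$ has degree $4$ on a $(1,1)$-surface. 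The irrational $(1,2)$-pencil case feeds into the same machinery as Case~(3). Your sketch covers only the easy subcase $K_{F_0}^2\geq 2$ (Theorem~\ref{not 11 not 12}).

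In Case~(3), the asymptotic $(L_F)^2\geq \tfrac{4}{3}-O(1/a)$ cannot hold: $L_F=\pi^*K_X|_F\leq \sigma^*K_{F_0}$ up to $\bQ$-effective correction, so $(L_F)^2\leq (\sigma^*K_{F_0})^2=1$, and indeed $(L_F)^2\to 1$ as $a\to\infty$. Thus $a(L_F)^2$ alone caps out near $p_g(X)-1$, strictly below the target. Nor does $21$ arise by absorbing $O(1/a)$ errors. The actual mechanism is two-fold. First, a connectedness-lemma argument (Proposition~\ref{pencil bound}) shows that if $\Mov|K_X|$ is \emph{not} free on any minimal model, then $p_g(X)-1\leq \frac{2}{\glct(F_0)}$; Koll\'{a}r's appendix bound $\glct\geq\frac{1}{10}$ for minimal $(1,2)$-surfaces then forces $p_g(X)\leq 21$, with strictness delivering $p_g(X)\leq 20$. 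Second, once $\Mov|K_X|$ is free, the paper does not work with the $(1,2)$-pencil directly but passes to the auxiliary system $|D|=|K_X+2F_{X,0}|$, which (by Koll\'{a}r vanishing and the structure of $|K_F|$ on a $(1,2)$-surface) gives a fibration of genus-$2$ curves with $h^0(D)\geq p_g(X)+3$; this is then fed back into Proposition~\ref{vol S>8/3}, with a further weak-positivity argument (Claim~\ref{claim D-E} and \ref{claim K-F pseff}) to treat the borderline case $a=p_g(X)+1$ where $Z$ is a surface of minimal degree. Your proposal misses both the glct mechanism and the passage to $|K_X+2F|$; the slope-inequality route you propose is not carried out, and your decomposition of $\vol(X)$ would require a lower bound on $\pi^*K_X\cdot Z^2$, which you do not supply.
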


\begin{remark}\label{rem 1}
 By Theorem \ref{main2}, if Inequality \eqref{N ineq} does not hold for a minimal projective $3$-fold $X$ of general type, then $5\leq p_g(X)\leq 20$, $K_X^3<70/3$, and $|K_X|$ is composed with a rational pencil of $(1,2)$-surfaces. We hope to study such exceptional cases in our next work. Also note that there are only finitely many families of such $3$-folds since minimal projective $3$-folds of general type with $K^3< c$ form a bounded family for any $c>0$ by \cite[Theorem 4]{MST}. In other words, Theorem \ref{main2} proves that the optimal Noether inequality holds for all but finitely many families of minimal projective $3$-folds (up to deformation).
\end{remark}

We briefly explain the difficulty of this problem and our strategy. Let $X$ be a minimal projective $3$-fold. The rough idea is to find a resolution $\pi: W\to X$ and a divisor $S$ on $W$ such that $\pi^*K_X\geq S$, then we can use $K_X^3\geq (\pi^*K_X|_S)^2$ to estimate the lower bound of $K_X^3$. One difficulty here is that both intersection numbers are no longer integers (which is not the case for previous works) and the singularities of $X$ make the situation more complicated.

In order to estimate $(\pi^*K_X|_S)^2$, we manage to find a comparison theorem between $\pi^*K_X|_S$ and $\sigma^*(K_{S_0})$, where $\sigma: S\to S_0$ is the minimal model of $S$.
Such a comparison theorem was known in previous works, but it heavily depends on a special choice of the resolution $\pi$ and contains tedious computations on exceptional divisors. In this paper, we establish the similar comparison theorem, independent of the resolution, as a simple application of an extension theorem (see Subsection \ref{sec ext}).

Hence the problem is reduced to estimate $K_{S_0}^2$. Most cases can be done by this way directly, except for $3$ main difficult cases: (1) $|K_X|$ gives a fibration of curves of genus $2$; (2) $|K_X|$ is composed with a pencil of $(1,1)$-surfaces; (3) $|K_X|$ is composed with a pencil of $(1,2)$-surfaces.
Here recall that for two integers $m>0$ and $n\geq 0$, an {\it $(m, n)$-surface} is a smooth projective surface $S$ of general type with $\vol(S)=m$ and $p_g(S)=n$.

In Case (1), the surface $S_0$, we find, is fibered by curves of genus $2$, and we treat this case by establishing a nice inequality for $K_{S_0}^2$ (See Subsection \ref{sec g=2}) and this method gives a new simplified proof compared to that for the Gorenstein case.

In Case (2), somehow we can use the classical method in \cite{MChen04JMSJ} to handle it.

Case (3) is the most difficult one. Recall that the case $p_g(X)\leq 4$ was treated in \cite[Theorem 1.5]{MChen07}.
In order to find a good $S$, we need to assume that $\Mov|K_X|$ is base point free. Of course this is not always true. However, we show that this is the case when $p_g(X)\geq 21$ by establishing an inequality on the pencil of surfaces which guarantees that if $\Mov|K_X|$ is not free, then $p_g(X)$ is bounded from above in terms of the global log canonical threshold of the surface (see Section \ref{section pencil ineq}). The number $21$ here comes from the estimate of the global log canonical thresholds of minimal $(1,2)$-surfaces, which is given in Appendix by Koll\'{a}r (see Theorem \ref{12alpha new})\footnote{Our original estimation on glct of $(1,2)$-surfaces was built on a case-by-case study of $g=2$ fibrations. We are grateful to Koll\'ar for providing an elegant and short proof in the Appendix.}.
Provided $\Mov|K_X|$ is base point free, we can firstly prove an inequality between the canonical volume and the geometric genus which is slightly weaker than the Noether inequality we expect. But during the proof, we can get some geometric information about the exceptional cases where we get weaker inequality.
Together with a very detailed investigation of the geometry of such exceptional cases, we manage to prove the expected inequality by using weak positivity of certain direct image sheaves. This method is new even for the Gorenstein case (in \cite{CCZ06}, those exceptional cases for Gorenstein minimal projective $3$-folds are treated by a totally different method, which is not applicable for non-Gorenstein minimal projective $3$-folds).

Therefore, even if $X$ is assumed to be Gorenstein, then this paper gives new proofs for the Noether inequality \eqref{N ineq} in the situations of \cite[Theorem 4.1]{CCZ06} and of \cite[Theorem 3.1]{CC15} (see Theorems \ref{last} and \ref{d2g2}).

At the end of  the introduction, we raise some open questions with remarks which are related to the Noether inequality problem.
\begin{question} Does Inequality \eqref{N ineq} hold for projective $3$-folds $X$ with $5\leq p_g(X)\leq 20$?
\end{question}

\begin{question} Is there a classification for all (minimal) projective $3$-folds $X$ satisfying $\vol(X)= \frac{4}{3}p_g(X)-{\frac{10}{3}}$? Or is there a new method to construct more examples, especially non-Gorenstein examples, which satisfy the Noether equality?
\end{question}
Note that all the known examples appear in Kobayashi \cite{Kob92} and Chen--Hu \cite{CH17} are Gorenstein minimal.

\begin{question}Is there a ``second Noether inequality" for projective $3$-folds? Namely, is there a real number $b<\frac{10}{3}$, such that for a projective $3$-folds $X$, if $\vol(X)> \frac{4}{3}p_g(X)-{\frac{10}{3}}$, then $\vol(X)\geq \frac{4}{3}p_g(X)-b$?
\end{question}
For a projective surface $S$, it is clear that, if $\vol(S)>2p_g(S)-4$, then $\vol(S)\geq 2p_g(S)-3$. But it becomes complicated for $3$-folds since $\vol(X)$ is no longer an integer.

\begin{question} How about the Noether inequality in higher dimensions?
\end{question}
Note that the existence of the Noether type inequality in higher dimension was proved in \cite[Corollary 5.1]{CJZ}, but so far there is no concrete formula. For some interesting examples, one may refer to \cite{CL} for a recent development.

\begin{question} How about the Noether inequality in positive characteristics?
\end{question}
The Noether inequality for surfaces in arbitrary characteristic was established by Liedtke \cite{Lie08}. But for the Noether type inequality for $3$-folds in positive characteristics, we know almost nothing.




\section{\bf Preliminaries}
\subsection{Notation and conventions}\
Throughout this paper, we work over any algebraically closed field of characteristic $0$.

 We adopt the standard notation and definitions in \cite{KMM} and \cite{KM}, and will freely use them.

A {\it log pair} $(X, B)$ consists of a normal projective variety $X$ and an effective $\mathbb{Q}$-divisor $B$ on $X$ such that $K_X+B$ is $\mathbb{Q}$-Cartier.

Let $f: Y\rightarrow X$ be a log
resolution of the log pair $(X, B)$, write
$$
K_Y =f^*(K_X+B)+\sum a_iF_i,
$$
where $\{F_i\}$ are distinct prime divisors. The log pair $(X,B)$ is called
\begin{enumerate}[label=(\alph*)]
\item \emph{kawamata log terminal} (\emph{klt},
for short) if $a_i> -1$ for all $i$;

\item \emph{log canonical} (\emph{lc}, for
short) if $a_i\geq -1$ for all $i$;

\item \emph{terminal} if $a_i> 0$ for all $f$-exceptional divisors $F_i$ and all $f$;
\item \emph{canonical} if $a_i\geq 0$ for all $f$-exceptional divisors $F_i$ and all $f$.
\end{enumerate}
Usually we write $X$ instead of $(X,0)$ in the case $B=0$.

For two integers $m>0$ and $n\geq 0$, an {\it $(m, n)$-surface} is a smooth projective surface $S$ of general type with $\vol(S)=m$ and $p_g(S)=n$.

A $\bQ$-divisor is said to be {\it $\bQ$-effective} if it is $\bQ$-linear equivalent to an effective $\bQ$-divisor.

For a $\bQ$-divisor $D$ and a prime divisor $E$, we denote $\coeff_{E}(D)$ by the coefficient of $E$ appearing in the expression of $D$.

For two linear systems $|A|$ and $|B|$, we write $|A|\preceq |B|$ if there exists an effective divisor $F$ such that $$|B|\supseteq |A|+F.$$ In particular, if $A\leq B$ as divisors, then $|A|\preceq |B|$.

\subsection{Rational maps defined by linear systems of Weil divisors}\label{b setting}\

Let $X$ be a normal projective $3$-fold.
Consider an effective $\bQ$-Cartier Weil divisor $D$ on $X$ with $h^0(X, D)\geq 2$. We study the rational map defined by $|D|$, say
$$X\overset{\Phi_{|D|}}{\dashrightarrow} \bP^{h^0(D)-1}$$ which is
not necessarily well-defined everywhere. By Hironaka's big
theorem, we can take successive blow-ups $\pi: W\rightarrow X$ such
that:
\begin{itemize}
\item [(i)] $W$ is smooth projective;
\item [(ii)] the movable part $|M|$ of the linear system
$|\rounddown{\pi^*(D)}|$ is base point free and, consequently,
the rational map $\gamma=\Phi_{|D|}\circ \pi$ is a projective morphism.
\end{itemize}
Let $W\overset{f}\longrightarrow \Gamma\overset{s}\longrightarrow Z$
be the Stein factorization of $\gamma$ with $Z=\gamma(W)\subseteq
\bP^{h^0(D)-1}$. We have the following commutative
diagram:
$$\xymatrix@=4.5em{
W\ar[d]_\pi \ar[dr]^{\gamma} \ar[r]^f& \Gamma\ar[d]^s\\
X \ar@{-->}[r]^{\Phi_{|D|}} & Z}
$$

There are $3$ cases according to $\dim \Gamma$.
\begin{enumerate}
\item If $\dim(\Gamma)=3$, then a general
member of $|M|$ is a smooth projective surface by
Bertini's theorem, and ${|D|}$ defines a generically finite map.

\item If $\dim(\Gamma)=2$, then a general
member $S$ of $|M|$ is a smooth projective surface by
Bertini's theorem, and a general fiber $C$ of $f$ is a smooth curve of genus $g\geq 2$. In this case, we say that $|D|$ {\it gives a fibration of curves of genus $g$}.

\item If $\dim(\Gamma)=1$, then $\Gamma$ is a smooth curve and
a general fiber $F$ of $f$ is a smooth projective surface
by Bertini's theorem. In this case, we say that
$|D|$ {\it is composed with a pencil of surfaces}. Moreover, if $F$ is an $(m,n)$-surface for some integers $m$ and $n$, then we say that
$|D|$ {\it is composed with a pencil of $(m,n)$-surfaces}. This pencil is said to be {\it rational} if $\Gamma\simeq \bP^1$. We may write
$M=\sum_{i=1}^a F_i$ where $F_i$ is a smooth fiber of $f$ for each $i$. It is easy to see that $a= h^0(D)-1$ if $\Gamma\simeq \bP^1$, and $a\geq h^0(D)$ if $\Gamma\not \simeq \bP^1$. \end{enumerate}

\begin{lem}\label{S^2=aC}
Keep the above setting. In Case (2), we can write $S|_S\equiv aC$ for some integer $a\geq h^0(D)-2$, where $C$ can be viewed as a general fiber of the restricted fibration $f|_{S}: S\to f(S).$ Moreover, if the equality holds, then $s: \Gamma\to Z$ is an isomorphism and $Z\subseteq \bP^{h^0(D)-1}$ is a surface of minimal degree (\cite[Exercise IV.18.4]{Beauville}).
\end{lem}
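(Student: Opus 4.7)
The plan is to exploit the Stein factorization to realize $S$ itself as a pullback. Let $H$ be a hyperplane section of $Z\subseteq\bP^{h^0(D)-1}$ and put $L:=s^{*}H$ on $\Gamma$, so that $M\sim\gamma^{*}H=f^{*}L$. Because $f_{*}\OO_{W}=\OO_{\Gamma}$, the projection formula gives $H^{0}(W,M)=H^{0}(\Gamma,L)$, hence every member of $|M|$ has the form $f^{*}T$ for some $T\in|L|$. In particular $|L|$ inherits base-point-freeness from $|M|$, so by Bertini on the normal surface $\Gamma$ a general $T\in|L|$ is a smooth irreducible curve avoiding $\Sing(\Gamma)$; thus for a general $S\in|M|$ we may write $S=f^{*}T$ with $f(S)=T$.

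The next step is a direct restriction computation. We have
$$S|_{S}=(f^{*}T)|_{S}=(f|_{S})^{*}(T|_{T}),$$
where $T|_{T}$ is a divisor of degree $L^{2}=(\deg s)(\deg Z)$ on the smooth curve $T$. Since every fiber of the fibration $f|_{S}:S\to T$ is numerically equivalent to the general fiber $C$, this yields
$$S|_{S}\equiv a\,C\qquad\text{with}\qquad a:=L^{2}=(\deg s)(\deg Z)\in\ZZ_{>0}.$$

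Finally, to secure the lower bound and the equality characterization I would invoke the classical degree bound: a non-degenerate irreducible surface $Z\subseteq\bP^{N}$ satisfies $\deg Z\geq N-1$, with equality precisely when $Z$ is a surface of minimal degree. Non-degeneracy of $Z$ is automatic here because $\gamma$ is defined by the full space $H^{0}(W,M)$, so no hyperplane in $\bP^{h^0(D)-1}$ contains the image. Taking $N=h^0(D)-1$ gives $\deg Z\geq h^0(D)-2$, hence $a\geq h^0(D)-2$. In the equality case one must simultaneously have $\deg s=1$ and $Z$ of minimal degree; all such $Z$ (rational normal scrolls, the Veronese surface, and cones over rational normal curves) are normal, so the finite birational morphism $s:\Gamma\to Z$ is an isomorphism by Zariski's Main Theorem. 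The argument is largely mechanical; the only point that might need care is the application of Bertini over $\Gamma$, which is handled by the normality of $\Gamma$ coming from Stein factorization.
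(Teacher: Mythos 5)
Your proof is correct and follows the same essential route as the paper: you identify $|M|$ with $f^{*}|L|$ via the Stein factorization, compute $a=L^{2}=(\deg s)(\deg Z)$, and then invoke the degree bound for non-degenerate surfaces together with the normality of minimal-degree surfaces and Zariski's Main Theorem for the equality case. The paper states $a=\deg s\cdot\deg Z$ without elaboration; you simply spell out the projection-formula and restriction computation behind it.
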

\begin{proof}
The inequality follows by the fact that $a=\deg s\cdot \deg Z$ and $Z\subseteq \bP^{h^0(D)-1}$ is non-degenerate. If the equality holds, then $s$ is birational and $Z\subseteq \bP^{h^0(D)-1}$ is a surface of minimal degree. In this case, $Z$ is normal and hence $s$ is an isomorphism.
\end{proof}

\subsection{A restriction comparison by virtue of extension theorem}\label{sec ext}\

We will use the following special form of an extension theorem due to Kawamata.
\begin{thm}[{\cite[Theorem A]{Kaw99}}]\label{kaw ext}
Let $V$ be a smooth variety and $D$ a smooth divisor on $V$. Assume that $K_V+D\sim_\bQ A+B$ where $A$ is an ample $\bQ$-divisor and $B$ is an effective $\bQ$-divisor such that $D\not\subseteq \Supp(B)$. Then
the
natural homomorphism $$H^0(V, \OO_V(m(K_V + D))) \to H^0(D, \OO_D(mK_D))$$ is surjective for all $m\geq 2$.
\end{thm}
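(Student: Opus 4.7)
The plan is to derive the statement from a cohomology vanishing via the standard adjunction sequence
$$0\to \OO_V(m(K_V+D)-D)\to \OO_V(m(K_V+D))\to \OO_D(mK_D)\to 0,$$
where the identification of the quotient uses adjunction $(K_V+D)|_D=K_D$. Since $m(K_V+D)-D=K_V+(m-1)(K_V+D)$, surjectivity of the restriction on $H^0$ reduces to the vanishing
$$H^1\bigl(V,\OO_V(K_V+(m-1)(K_V+D))\bigr)=0,$$
and invoking $K_V+D\sim_\bQ A+B$ rewrites this as a vanishing for $K_V+(m-1)A+(m-1)B$. The assumption $m\geq 2$ is essential here, as it leaves the genuine ample piece $(m-1)A$ to spend on a vanishing theorem.

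The natural tool is Nadel's vanishing applied with the multiplier ideal $\mathcal{J}((m-1)B)$. Passing to a log resolution $\mu\colon V'\to V$ of $(V,B)$, and using $D\not\subseteq\Supp(B)$ to ensure the strict transform $D'$ of $D$ is disjoint from the components of $\mu^{-1}(\Supp B)$ that meet $D$ in codimension one, one applies Kawamata--Viehweg vanishing to a divisor of the shape $K_{V'}+(m-1)\mu^*A+\{(m-1)\mu^*B\}$, which is $\bQ$-linearly equivalent to a nef-and-big divisor with simple normal crossing fractional part. Pushing the resulting vanishing down yields the desired vanishing on $V$, tensored with $\mathcal{J}((m-1)B)$, and hence the extension of all sections lying in the multiplier-ideal subsheaf.

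The remaining issue is to upgrade ``sections coming from $\mathcal{J}((m-1)B)$'' to all of $H^0(D,\OO_D(mK_D))$. For this one appeals to the restriction theorem for multiplier ideals: since $D$ is smooth and not contained in $\Supp(B)$, the pair $(V,D+B)$ restricts cleanly to $D$, and a diagram chase between the Nadel vanishings on $V$ and on $D$ realizes every section of $\OO_D(mK_D)$ as the restriction of a section on $V$. The main technical obstacle will be this final bookkeeping step, because the hypothesis $D\not\subseteq\Supp(B)$ is much weaker than a klt condition on $(V,B)$, and one must do real work---via a covering construction, as is typical in Kawamata's arguments---to absorb the non-klt part of $B$ into the canonical class of an auxiliary cover before the desired lifting becomes accessible.
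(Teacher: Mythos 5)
The paper states this result with a citation to Kawamata \cite[Theorem A]{Kaw99} and does not prove it, so there is no in-paper argument to compare against. Your reduction via the adjunction sequence to the vanishing of $H^1\bigl(V,\OO_V(K_V+(m-1)(K_V+D))\bigr)$ and the subsequent Nadel vanishing with $\mathcal{J}:=\mathcal{J}\bigl((m-1)B\bigr)$ are correct opening moves. But Nadel vanishing only yields surjectivity of
$$H^0\bigl(V,\OO_V(m(K_V+D))\otimes\mathcal{J}\bigr)\lra H^0\bigl(D,\OO_D(mK_D)\otimes(\mathcal{J}\cdot\OO_D)\bigr),$$
and $\mathcal{J}\cdot\OO_D$ is in general a \emph{proper} ideal of $\OO_D$: the hypothesis $D\not\subseteq\Supp(B)$ makes $\mathcal{J}$ trivial at the generic point of $D$, but says nothing at the points where $D$ meets the non-klt locus of $(V,(m-1)B)$ --- for instance, wherever $(m-1)B$ has a component of coefficient $>1$ crossing $D$. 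So this argument extends only the sections lying in the subsheaf $\OO_D(mK_D)\otimes(\mathcal{J}\cdot\OO_D)$, not all of $H^0(D,\OO_D(mK_D))$.

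You correctly flag this as the remaining difficulty, but the proposed remedy does not close it. The restriction theorem for multiplier ideals gives the inclusion $\mathcal{J}\bigl(D,(m-1)B|_D\bigr)\subseteq\mathcal{J}\cdot\OO_D$, which bounds the restricted ideal from \emph{below}; it offers no control over the quotient of $\OO_D(mK_D)$ by its multiplier-ideal subsheaf, which is exactly where the potentially missing sections live. No ``diagram chase between the Nadel vanishings on $V$ and on $D$'' will produce those sections, and your closing sentence concedes the point by deferring to an unspecified ``covering construction, as is typical in Kawamata's arguments.'' That deferred step is precisely the content of Kawamata's Theorem A and is not carried out here. In short: the reduction is right and the obstruction is correctly identified, but the part of the proof that makes the theorem true is absent.
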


By applying the above extension theorem, we get the following very useful corollary, which can be viewed as a generalization of \cite[Lemma 3.4]{CZuo08} or \cite[Lemma 3.7]{CZ08}, but the proof here is much more simple. The idea of the proof is from \cite[Subsection 2.4]{CZ16}.

\begin{cor}\label{KXKS}
Let $X$ be a minimal projective $3$-fold of general type and $D$ a semi-ample Weil divisor on $X$. Let $\pi:W\to X$ be a resolution and $S$, a semi-ample divisor on $W$, is assumed to be a smooth surface of general type. Assume that $\lambda\pi^*D- S$ is $\bQ$-effective for some positive rational number $\lambda$. Then $\pi^*(K_X+\lambda D)|_S-\sigma^*K_{S_0}$ is $\bQ$-effective on $S$, where $\sigma:S\to S_0$ is the contraction onto the minimal model $S_0$.
\end{cor}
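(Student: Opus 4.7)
The plan is to combine Theorem~\ref{kaw ext} with a discrepancy computation and the negativity lemma. Since $X$ has terminal singularities, $K_W \sim_\bQ \pi^*K_X + E$ for an effective $\pi$-exceptional $\bQ$-divisor $E$, and the hypothesis gives an effective $\bQ$-divisor $\Delta$ with $\lambda\pi^*D \sim_\bQ S + \Delta$, chosen so that $S \not\subset \Supp\Delta$. Adjunction on $S$ reads $K_S = \sigma^*K_{S_0} + F$ with $F \geq 0$ $\sigma$-exceptional. Combining yields the identity
\[
\pi^*(K_X+\lambda D)|_S \sim_\bQ \sigma^*K_{S_0} + (F + \Delta|_S - E|_S),
\]
so the problem reduces to showing that $F + \Delta|_S - E|_S$ is $\bQ$-effective on $S$.

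Next I would invoke Theorem~\ref{kaw ext}. Since $K_X + \lambda D$ is nef and big (as $K_X$ is big and $\lambda D$ is semi-ample), $\pi^*(K_X+\lambda D)$ is nef and big on $W$; Kodaira's lemma together with a suitable perturbation produces a decomposition $K_W + S \sim_\bQ A + B$ with $A$ ample, $B$ effective, and $S \not\subset \Supp B$. The extension theorem then yields a surjection $H^0(W, m(K_W+S)) \twoheadrightarrow H^0(S, mK_S)$ for all $m \geq 2$. For $m$ large and divisible, $|mK_{S_0}|$ is base point free on $S_0$; I pick a general $D_0 \in |mK_{S_0}|$ so that $\sigma^*D_0$ has support disjoint from that of $F + \Delta|_S + E|_S$. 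Lifting the corresponding section $\sigma^*D_0 + mF \in |mK_S|$ produces an effective divisor $\tilde T \in |m(K_W+S)|$ with $\tilde T|_S = \sigma^*D_0 + mF$ and $S \not\subset \Supp\tilde T$.

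The key step is then the negativity lemma applied to $L := \tilde T + m\Delta - mE$. Since $L \sim_\bQ m\pi^*(K_X+\lambda D)$ is $\bQ$-linearly equivalent to a pullback from $X$, it is $\pi$-numerically trivial, so $-L$ is $\pi$-nef. Moreover $\pi_*L = \pi_*\tilde T + m\pi_*\Delta$ (because $E$ is $\pi$-exceptional) is an effective Weil divisor on $X$. By the negativity lemma, $L$ itself is effective on $W$. Restricting to $S$ (permissible because $S$ avoids the support of each of $\tilde T$, $\Delta$, $E$) gives the effective $\bQ$-divisor
\[
L|_S = \sigma^*D_0 + mF + m\Delta|_S - mE|_S
\]
on $S$, which is $\bQ$-linearly equivalent to $m\pi^*(K_X+\lambda D)|_S$.

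Finally, by the generic choice of $D_0$ the support of $\sigma^*D_0$ is disjoint from that of $mF + m\Delta|_S - mE|_S$; hence the effectivity of the sum $L|_S$ implies, component by component, that $mF + m\Delta|_S - mE|_S \geq 0$ as a $\bQ$-divisor on $S$. Dividing by $m$ produces an effective representative of $F + \Delta|_S - E|_S \sim_\bQ \pi^*(K_X+\lambda D)|_S - \sigma^*K_{S_0}$, establishing the $\bQ$-effectivity claimed. The main obstacle I anticipate is verifying the ample plus effective decomposition needed for Theorem~\ref{kaw ext}, specifically arranging $S \not\subset \Supp B$; once this is in hand, the negativity lemma and the decoupling trick in the last step both go through cleanly.
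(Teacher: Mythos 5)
Your argument is correct, but it travels a genuinely different road from the paper's proof. Both begin identically, with Kawamata's extension theorem (Theorem~\ref{kaw ext}) giving the surjection $H^0(W, m(K_W+S)) \onto H^0(S, mK_S)$. From there the paper stays entirely at the level of linear systems: it compares the movable part $\hat M_m$ of $|m(K_W+S)|$ against $|m\pi^*(K_X+\lambda D)|$ on one side and against $|m\sigma^*K_{S_0}|$ on the other, invoking \cite[Lemma 2.7]{MChen01} to pass restrictions of movable parts through the chain of inclusions. You instead lift one carefully chosen section to a divisor $\tilde T$, package everything into the $\pi$-numerically trivial divisor $L = \tilde T + m\Delta - mE$, and apply the negativity lemma to conclude $L\ge 0$; restricting to $S$ and decoupling the pullback $\sigma^*D_0$ (chosen with no common component with $F+\Delta|_S+E|_S$) then gives the effectivity of $F+\Delta|_S - E|_S$ directly. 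In effect you trade the somewhat specialized movable-part lemma from \cite{MChen01} for the more universally known negativity lemma, at the cost of a longer and more explicit computation. Both approaches buy the same independence from a ``feasible resolution'' and both hinge on the extension theorem.

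A few small points to tighten. First, you should note that one may assume $S\not\subset\Supp\Delta$ without loss of generality: if $\Delta = aS + \Delta'$ with $a>0$, then $\frac{\lambda}{1+a}\pi^*D - S\sim_\bQ \frac{1}{1+a}\Delta'$ and the conclusion for the smaller coefficient $\frac{\lambda}{1+a}$ implies the one for $\lambda$ because $\pi^*D|_S$ is $\bQ$-effective. Second, ``support disjoint'' is stronger than you need and stronger than a general $D_0$ gives you; what you need, and what genericity of $D_0$ in a free system provides, is that $\sigma^*D_0$ has \emph{no common component} with $F+\Delta|_S+E|_S$, which is enough for the coefficient-by-coefficient comparison in the last step. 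Third, the obstacle you flag --- producing $K_W+S\sim_\bQ A+B$ with $A$ ample, $B\ge 0$, $S\not\subset\Supp B$ --- is resolved exactly as the paper implicitly does: $K_W+S$ is big (big plus nef), so $K_W+S\sim_\bQ A+B_1$ with $A$ ample, $B_1\ge 0$; if $S$ appears in $B_1$ with coefficient $b>0$, replace that portion $bS$ by $\frac{b}{N}S'$ for a general $S'\in |NS|$ with $N\gg 0$ (possible since $S$ is semi-ample), which removes $S$ from the effective part. Finally, you should also observe $S\not\subset\Supp E$: this holds because $E$ is $\pi$-exceptional and a nonzero effective $\pi$-exceptional divisor cannot be nef, while $S$ is semi-ample. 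With these minor repairs the proof is sound.
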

\begin{proof}
Since $X$ is of general type, $K_W$ is big. Since $S$ is semi-ample, by Theorem \ref{kaw ext},
$$H^0(W, \OO_W(m(K_W + S))) \to H^0(S, \OO_S(mK_S))$$ is surjective for all $m\geq 2$.
Denote by $\hat{M}_m$ the movable part of $|m(K_W+S)|$. Note that for all $m\geq 4$, the movable part of $|mK_S|$ is just $|m\sigma^*K_{S_0}|$  as $|mK_{S_0}|$ is base point free (\cite{Bom73}).
Now take a sufficiently divisible $m$ such that $mK_X$ and $m\lambda D$ are Cartier and base point free, and $|m\lambda\pi^*D|\succeq |mS|$. In particular, $$|m(K_W+\pi^*(\lambda D))|\succeq |m(K_W + S)|.$$
Note that $|m\pi^*(K_X+\lambda D)|=\Mov|m(K_W+\pi^*(\lambda D))|$.
Hence by \cite[Lemma 2.7]{MChen01}, we have
\begin{align*}
|m\pi^*(K_X+\lambda D)||_S\succeq \hat{M}_m|_S\succeq |m\sigma^*K_{S_0}|,
\end{align*}
which means that $\pi^*(K_X+\lambda D)|_S-\sigma^*K_{S_0}$ is $\bQ$-effective on $S$.
\end{proof}

\subsection{Weak positivity of direct images}\

Recall the definition of weak positivity by Viehweg.
\begin{definition}[\cite{Vie83}] Let $X$ be a smooth projective variety and $\FF$ a torsion-free coherent sheaf on $X$. We say that $\FF$ is weakly positive on $X$ if there exists some Zariski open subvariety $U \subseteq X$ such that for every ample invertible sheaf $\HH$ and every positive integer $\alpha$, there exists some positive integer $\beta$ such that $({S}^{\alpha\beta}\FF)^{**}\otimes \HH^\beta$ is generated by global sections over $U$, which means that the natural map
$$
H^0(X,({S}^{\alpha\beta}\FF)^{**} \otimes \HH^\beta )\otimes \OO_X \to ({S}^{\alpha\beta}\FF)^{**} \otimes \HH^\beta
$$
is surjective over $U$.
Here $({S}^{k}\FF)^{**}$ denotes the reflexive hull of the symmetric product ${S}^{k}\FF$.
\end{definition}

In the very last part of the proof (see Claim \ref{claim K-F pseff}), we need to estimate intersection numbers coming from the relative canonical sheaf, where weak positivity of direct images is involved in an effective way.
We need the following result that was originally developed by Viehweg \cite{Vie83} and generalized by Campana \cite{Cam04} and Lu \cite{Lu02}. Here we only state a simplified version of {\cite[Theorem 4.13]{Cam04}}.
\begin{thm}[{\cite[Theorem 4.13]{Cam04}}]\label{thm wp} Let $g: Y\to Z$ be a surjective morphism between smooth projective varieties and $D$ a reduced divisor on $Y$ with simple normal crossing support. Then $g_*\OO_Y(m(K_{Y/Z} +D))$ is torsion free and weakly positive for every integer $m>0$.
\end{thm}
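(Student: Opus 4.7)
The plan is to follow Viehweg's original strategy: first reduce the general statement involving a boundary $D$ to the case $D=0$ via a cyclic cover construction, and then establish weak positivity of $g_*\omega^{\otimes m}_{Y/Z}$ by combining Viehweg's fibre product trick with Hodge-theoretic semi-positivity of relative canonical sheaves.

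Torsion-freeness is essentially formal: the sheaf $\OO_Y(m(K_{Y/Z}+D))$ is a line bundle, hence torsion free on $Y$, and pushforward by a surjective morphism with $Z$ smooth preserves torsion-freeness (which reduces to a codimension-one-point check, where it is a statement about modules over a discrete valuation ring). For weak positivity I would first reduce to the case $D=0$: after passing to a sufficiently divisible multiple, form the cyclic cover $\sigma\colon \tilde Y\to Y$ of degree $m$ branched along $mD$ and desingularize to obtain $\rho\colon Y'\to \tilde Y$. The standard eigenspace decomposition of $\sigma_*\OO_{\tilde Y}$ then realises $g_*\OO_Y\bigl(m(K_{Y/Z}+D)\bigr)$ as a direct summand of $(g\circ\sigma\circ\rho)_*\omega^{\otimes m}_{Y'/Z}$, up to absorbing integer parts into $D$. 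Since weak positivity passes to direct summands, it is enough to prove weak positivity of $g_*\omega^{\otimes m}_{Y/Z}$ for a generic morphism $g\colon Y\to Z$ with $D=0$.

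For the latter I would invoke Viehweg's fibre product trick. Let $Y^{(r)}$ be a resolution of the $r$-fold fibre product of $Y$ over $Z$, with structure morphism $g^{(r)}$. There is a natural map
\[
\bigl(g_*\omega^{\otimes m}_{Y/Z}\bigr)^{\otimes r} \;\longrightarrow\; g^{(r)}_*\omega^{\otimes m}_{Y^{(r)}/Z}
\]
which is generically an isomorphism. Combined with the weak positivity of $g_*\omega_{Y/Z}$ (Koll\'{a}r--Viehweg) and the observation that, after twisting by a sufficiently ample line bundle on $Z$, the pushforward from $Y^{(r)}$ becomes globally generated on a large open set for $r\gg 0$, one bootstraps generic global generation (after twists) into the precise weak positivity property in the definition.

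The main obstacle---the deepest input---is the $m=1$ weak positivity of $g_*\omega_{Y/Z}$. This requires a semi-stable reduction of $g$ to control monodromy, followed by Fujita--Kawamata type estimates for the Hodge metric on the variation of Hodge structure $R^{\dim Y-\dim Z}g_*\bQ_Y$; Schmid's nilpotent orbit theorem ensures that the Hodge bundle extends across the discriminant locus with a semi-positive metric. The remaining bookkeeping---passing from $m=1$ to arbitrary $m\geq 1$ and re-incorporating the boundary $D$ via the cyclic cover reduction---is technical but essentially combinatorial once this Hodge-theoretic ingredient is in place.
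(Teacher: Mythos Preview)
The paper does not prove this theorem at all: it is quoted as a known result, attributed to Viehweg \cite{Vie83} in the case $D=0$ and to Campana \cite{Cam04} and Lu \cite{Lu02} for the log version, and is used as a black box in the proof of Claim~\ref{claim D-E}. So there is no ``paper's own proof'' to compare against.

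That said, your outline is essentially the strategy of those references. A couple of points are worth flagging. First, your cyclic-cover reduction is stated too loosely: taking the degree-$m$ cover ``branched along $mD$'' with $\mathcal L=\OO_Y(D)$ is degenerate (the cover splits), and in practice one uses Kawamata's covering trick or a Bloch--Gieseker-type construction to produce a finite cover $\tilde Y\to Y$ on which the pullback of $D$ becomes divisible enough that the log sheaf embeds as a direct summand of a genuine relative pluricanonical sheaf. Campana's paper handles this via his orbifold formalism rather than a single cyclic cover. Second, the fibre-product step requires flattening $g$ first (Viehweg's weak semistable reduction, or working over the complement of the discriminant and using the reflexive hull), since otherwise the fibre products are badly singular and the comparison map you wrote need not be generically an isomorphism. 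These are standard refinements rather than gaps, and your identification of the Hodge-theoretic semipositivity of $g_*\omega_{Y/Z}$ as the essential input is correct.
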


\subsection{Global log canonical thresholds}\

We recall the definition of log canonical thresholds and introduce the concept of global log canonical thresholds for minimal projective varieties of general type.

Let $(X, B)$ be a log canonical pair and $D\geq 0$ be a $\bQ$-Cartier $\bQ$-divisor. The
{\it log canonical threshold} of $D$ with respect to $(X, B)$ is defined by
$$\lct(X, B; D) = \sup\{t\geq 0 \mid (X, B+ tD) \text{ is lc}\}.$$

\begin{definition}
Let $Y$ be a normal projective variety with at worst klt singularities such that $K_Y$ is nef and big. We define the {\it global log canonical threshold} ({\it \glct}, for short) of $Y$ as the following:
\begin{align*}
\glct(Y){}&=\inf\{\lct(Y; D)\mid 0\leq D\sim_\bQ K_Y\}\\
{}&=\sup\{t\geq 0\mid (Y, tD) \text{ is lc for all }0\leq D\sim_\bQ K_Y\}.
\end{align*}
\end{definition}

In general, {\glct} is very difficult to compute. In this paper, we are mainly interested in the lower bound of $\glct(S)$ for a minimal $(1,2)$-surface $S$.
Note that in an earlier version of this paper, we showed that the glct of minimal $(1,2)$-surfaces are at least $\frac{1}{13}$, which depends on detailed analysis of Ogg's list of genus two fibrations. Later J\'{a}nos Koll\'{a}r sent us his note which gives a delicate and short proof that the glct of minimal $(1,2)$-surfaces are at least $\frac{1}{10}$ (see Theorem \ref{lcth.from.h0.cor}) and kindly allowed us to include his note in this paper (see Appendix A). We rephrase his result here.
\begin{thm}[Koll\'{a}r]\label{12alpha new}
Let $S$ be a minimal $(1,2)$-surface. Then $$\glct(S)\geq \frac{1}{10}.$$
\end{thm}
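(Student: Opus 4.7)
The plan is to argue by contradiction: assume there exists an effective $\bQ$-divisor $D \sim_\bQ K_S$ and a rational number $0 < c < \tfrac{1}{10}$ for which $(S, cD)$ fails to be log canonical. After a standard tie-breaking perturbation (which is available because $K_S$ is big on the minimal surface $S$ of general type), I may assume that $(S, cD)$ has a unique minimal non-klt centre $Z$, which on a surface must be either an irreducible curve $C$ or a single closed point $p$.

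\textbf{Curve case.} If $Z = C$, the non-klt condition forces the coefficient of $C$ in $D$ to be at least $1/c > 10$. Intersecting $D \sim_\bQ K_S$ with the nef Cartier divisor $K_S$ yields $10\,(K_S \cdot C) < D \cdot K_S = K_S^2 = 1$, so $K_S \cdot C = 0$; adjunction on the minimal surface $S$ then forces $C$ to be a smooth rational $(-2)$-curve. I would rule out a decomposition $K_S \sim_\bQ aC + D'$ with $a > 10$ by intersecting with a general member $H \in |K_S|$ (noting $H \cdot C = 0$, so $C$ is either disjoint from $H$ or contained in $H$) and using the triviality of the Zariski decomposition of the nef class $K_S$ together with the rigidity of the $(-2)$-curve $C$ inside the pencil of genus-$2$ canonical curves.

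\textbf{Point case and the $h^0$-mechanism.} If $Z = \{p\}$, I would invoke the general Koll\'ar-type ``lct from $h^0$'' criterion that is the main input of the appendix (the target statement being its corollary): a non-lc point of $(S, cD)$ with $D \sim_\bQ K_S$ produces, via multiplier ideal sheaves and Nadel-type vanishing, sections of the pluricanonical sheaves $\OO_S(mK_S)$ with prescribed high-order vanishing at $p$. Comparing against the explicit growth $h^0(mK_S) = 3 + \binom{m}{2}$ for $m \geq 2$ of a minimal $(1,2)$-surface (from $\chi(\OO_S) = 3$, Kodaira vanishing, and Riemann--Roch) is expected to yield exactly $c \geq 1/10$. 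The geometric content underwriting the sharpness is the canonical pencil: $|K_S|$ is a pencil of arithmetic-genus-$2$ curves with $K_S^2 = 1$, so every $H \in |K_S|$ satisfies $D \cdot H = 1$; this rules out $\mult_p D$ being large unless $D$ shares a component with the canonical divisor through $p$, which throws us back into the curve case.

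The main obstacle I anticipate is the curve case and the closely related reducible-fibre subcase of the point case: one must exclude $(-2)$-curve configurations absorbing enormous coefficients in some $\bQ$-effective representative of $K_S$. I would address this by combining the triviality of the Zariski decomposition of the nef divisor $K_S$, the ADE rigidity of $(-2)$-curves on a minimal surface of general type, and the numerical constraints $p_g(S) = 2$ and $K_S^2 = 1$, channelled through the $h^0$-to-lct mechanism to recover the precise constant $\tfrac{1}{10}$.
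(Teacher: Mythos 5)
Your outline correctly identifies the raw ingredients — Nadel vanishing, the numerology $K_S^2=1$, $p_g=2$, $h^0(2K_S)=4$, and the genus-$2$ canonical pencil — but it misses the structural move that makes the paper's argument work, and as a result both of your two cases have real gaps.

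The paper does \emph{not} work on the smooth minimal surface $S$. It first passes to the canonical model $\bar S$ (a surface with Du~Val singularities, $K_{\bar S}$ ample Cartier, $K_{\bar S}^2=1$, $h^0(2K_{\bar S})=4$) via the crepant morphism $\tau\colon S\to\bar S$, and notes that $(S,cB)$ is lc iff $(\bar S,c\tau_*B)$ is lc. This single step dissolves your ``curve case'' entirely: on $\bar S$ every curve $B$ satisfies $(K_{\bar S}\cdot B)\geq 1$ because $K_{\bar S}$ is ample Cartier, so any curve appearing in $\Delta_{\bar S}\equiv K_{\bar S}$ has coefficient $\leq 1$, and one may immediately assume the non-lc centres are isolated points (see the first lines of the proof of Proposition~\ref{lcth.from.h0.prop}). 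Your attempt to rule out $K_S\sim_\bQ aC+D'$ with $a>10$ and $C$ a $(-2)$-curve by ``triviality of the Zariski decomposition'' and ``rigidity'' is not a proof: a $(-2)$-curve is $K_S$-trivial, so the intersection $D\cdot K_S=1$ gives no bound on $a$, and such high-coefficient $(-2)$-configurations genuinely can occur. They correspond precisely to deep non-lc points at the Du~Val singularities of $\bar S$, and the whole difficulty of the theorem lives there.

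Your point case has a complementary problem. On the smooth surface $S$, a point of multiplicity high enough to violate lc with threshold $c<1/r$ forces $\mathcal J^+(\Delta)\subseteq\mathfrak m_p^{2r-1}$, which at a \emph{smooth} point has codimension $\binom{2r}{2}=r(2r-1)$; comparing with $h^0(2K_S)=4$ forces $r\leq 1$ and gives a bound far better than $1/10$. So the naive smooth-point multiplier-ideal count does not ``yield exactly $c\geq 1/10$'' — it reveals that the smooth-point case is easy and the actual constant comes entirely from the Du~Val points. There the multiplier-ideal codimension is taken \emph{equivariantly} for the corresponding finite subgroup $G\subset\SL_2(\cc)$, and the paper must compute the dimensions of spaces of $G$-invariant jets using Klein's generators-of-invariants table (Proposition~\ref{mcd.prop}). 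Even after that computation the $E_8$ case only gives $\lct\geq\tfrac1{16}$; the paper needs a separate, finer argument in the $E_8$ case (using a curve $C\in|2K_S|$ with $\mult_0\tilde C=12$ on the icosahedral cover and an intersection bound $(\Delta'\cdot\tilde C)_0\leq 240$) to push this to $\tfrac1{10}$, while $E_7$ gives $\tfrac1{10}$ on the nose. None of this appears in your proposal, and without the Du~Val reduction plus the invariant-theory computation plus the $E_8$ refinement, the constant $\tfrac1{10}$ is simply not reachable along your route.
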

\begin{proof}
Fix an effective $\bQ$-divisor $B\sim_\bQ K_S$, it suffices to show that $(S, \frac{1}{10}B)$ is lc.
Denote by $\bar{S}$ the canonical model of $S$ and $\tau: S \to \bar{S}$ the induced map. Consider the effective $\bQ$-divisor $\tau_*B\sim_\bQ K_{\bar{S}}$. Then Theorem \ref{lcth.from.h0.cor} shows that $(\bar{S}, \frac{1}{10}\tau_*B)$ is lc. Since $\tau$ is crepant, $(S, \frac{1}{10}B)$ is also lc.
\end{proof}

\begin{remark}
The concept of {\glct} we defined here is an analogue of the global log canonical thresholds (also called alpha-invariants) of Fano varieties (\cite{Tia87, Dem08}). Unlike the {\glct} of Fano varieties, it is surprising that we could not find any related study of this invariant in literature for minimal projective varieties of general type. The reason might be that the {\glct} of Fano varieties have been found to possess many important applications (e.g. on the existence of K\"ahler--Einstein metrics), whereas the {\glct} of minimal projective varieties of general type is not the case in practice.
\end{remark}

However, in Section \ref{section pencil ineq}, we establish an interesting inequality for minimal projective $3$-folds which admit a pencil of surfaces, where the {\glct} of minimal surfaces are involved very naturally. We hope to find more interesting applications for the {\glct} of minimal projective varieties of general type in the future.

\subsection{An inequality for surfaces admitting a genus $2$ fibration}\label{sec g=2}\

\begin{prop}\label{vol S>8/3}
Let $S$ be a smooth projective surface of general type and $T$ a smooth complete curve.
Suppose that $f: S\to T$ is a fibration of which the general fiber $C$ is of genus $2$ . Assume that $p_g(S)\geq 3$ and $K_S\equiv nC+G$ for some effective integral divisor $G$ on $S$ and a positive integer $n$. Then $$\vol(S)\geq \frac{8}{3}(n-1).
$$
\end{prop}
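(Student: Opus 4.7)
The plan is to bound $K_S^2$ from below and then invoke $\vol(S)\geq K_S^2$ (which holds since $K_S=\sigma^*K_{S_0}+E$ for the morphism $\sigma\colon S\to S_0$ to the minimal model, with $E$ effective exceptional). From $C^2=0$ and adjunction $K_S\cdot C=2g(C)-2=2$, the assumption $K_S\equiv nC+G$ immediately gives $G\cdot C=2$ together with
\[
K_S^2 \;=\; 2n+K_S\cdot G \;=\; 4n+G^2, \qquad K_S\cdot G \;=\; 2n+G^2.
\]
Combining these with $2p_a(G)-2=K_S\cdot G+G^2$ yields the clean identity $p_a(G)=G^2+n+1$, so $K_S^2=3n+p_a(G)-1$. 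The target inequality $\vol(S)\geq \tfrac{8}{3}(n-1)$ therefore reduces to the lower bound $p_a(G)\geq -\tfrac{n+5}{3}$.

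If $G$ is a prime divisor (irreducible and reduced), then $p_a(G)\geq 0\geq -\tfrac{n+5}{3}$ automatically, and we are done. The difficulty is therefore confined to the reducible or non-reduced case, where $p_a$ can drop. Writing $G=\sum m_iG_i$ into prime components and decomposing $G=G_h+G_v$ into horizontal and vertical parts relative to $f$, the constraint $G_h\cdot C=2$ forces $G_h$ to fit into only three possibilities (an irreducible bisection, two distinct sections, or twice a section), while $G_v$ is supported on fibers and is controlled by Zariski's lemma ($G_v^2\leq 0$, with equality only for fiber-multiples). Applying adjunction and Riemann--Hurwitz on the prime components of $G_h$, together with Zariski's lemma on $G_v$, provides the component-level estimates that must be aggregated via the standard formula
\[
p_a(G) \;=\; \textstyle\sum_i p_a(m_iG_i) \;-\; \#\{\text{components}\} \;+\; 1 \;+\; \textstyle\sum_{i<j} m_im_j\,G_i\cdot G_j.
\]

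The hypothesis $p_g(S)\geq 3$ is then brought in through the restriction sequence on a general fiber: since $h^0(C,K_C)=2$, one has $h^0(S,K_S-C)\geq p_g(S)-2\geq 1$, so $|K_S-C|$ is non-empty; and the identification $(K_S-jC)|_C\cong K_C$ coming from $C^2=0$ makes the same argument iterable, yielding effective presentations $K_S\sim jC+H_j$ with $H_j\geq 0$ whenever $p_g(S)\geq 2j+1$. The main obstacle will be closing the remaining gap when $G$ is highly reducible: the naive component-by-component bound on $p_a(G)$ degrades as the number of components grows, and neither the trivial estimate $K_S^2\geq 2n$ (from $K_S\cdot G\geq 0$) nor the Noether/Xiao estimate $K_S^2\geq 2p_g(S)-4$ is sufficient on its own. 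The most plausible route is to couple the slope inequality $K_{S/T}^2\geq 2\chi_f$ (applied to a relatively minimal model of $f$) with the horizontal case-analysis of $G_h$ and the effective decompositions produced by $p_g(S)\geq 3$, in order to force the precise constant $\tfrac{8}{3}$ in the final bound.
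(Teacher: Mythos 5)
Your setup is on the right track but the proof is not finished, and a key reduction is missing. You correctly compute $K_S\cdot C=2$, $G\cdot C=2$, $K_S^2=4n+G^2$ and the reformulation $K_S^2=3n+p_a(G)-1$, and you correctly identify that the only real work is the reducible/non-reduced case, handled by splitting $G$ into horizontal and vertical parts with the horizontal part $\Gamma$ satisfying $\Gamma\cdot C=2$ and hence falling into three shapes. That is exactly the structure of the paper's argument. However, your write-up stops at the point where the actual estimates must be performed: the last paragraph is an announcement of a strategy (``couple the slope inequality with the horizontal case-analysis'') rather than a proof. The needed bound $p_a(G)\geq -\tfrac{n+5}{3}$ does not hold for arbitrary effective divisors, so the case analysis is the entire content of the proposition and cannot be deferred.

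The ingredient you are missing, which makes the paper's case analysis go through cleanly, is the reduction to $S$ minimal. The paper first observes that $p_g(S)\geq 3$ together with the Hodge index theorem gives $(\sigma^*K_{S_0}\cdot C)\geq 2$; since also $K_S\cdot C=2$, equality forces every $\sigma$-exceptional curve to be $f$-vertical, so the fibration and the relation $K_{S_0}\equiv nC_0+\sigma_*G$ descend to $S_0$. One may therefore assume $K_S$ is nef. Your alternative $\vol(S)\geq K_S^2$ is true but weaker: it forgoes the nefness of $K_S$, which is used repeatedly in the paper's estimates ($K_S\cdot V\geq 0$, and in writing $2K_S\cdot\Gamma=((K_S+nC+\Gamma+V)\cdot\Gamma)\geq ((K_S+\Gamma)\cdot\Gamma)+2n$). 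With nefness in hand the three cases for $\Gamma$ (prime; two distinct sections; a double section) are each dispatched by the elementary bound $((K_S+\Gamma_i)\cdot\Gamma_i)\geq -2$ for a prime divisor $\Gamma_i$, giving respectively $2K_S^2\geq 6n-2$, $2K_S^2\geq 6n-4$, and $3K_S^2\geq 8n-8$. No slope inequality or Riemann--Hurwitz on components is needed; invoking them here would be both heavier and, as you note yourself, not obviously sufficient. So the gap is concrete: you need to (i) justify passage to the minimal model to get $K_S$ nef, and (ii) actually carry out the three-case computation rather than gesture at it.
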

\begin{proof}
Denote by $S_0$ the minimal model of $S$ and $\sigma:S \to S_0$ the contraction map. Since $p_g(S)\geq 3$, $(\sigma^*K_{S_0}\cdot C)\geq 2$ by the Hodge index theorem (see, for example, \cite[Lemma 2.4]{exp3}). Since $p_g(C)=2$, $(K_S\cdot C)=2$ and hence $(\sigma^*K_{S_0}\cdot C)= 2$. In particular, this means that all $\sigma$-exceptional divisors are contracted by $f$, and hence the induced map $S_0\to T$ is a morphism. All conditions are the same after replacing $S$ with $S_0$. Hence we may and do assume that $S$ is minimal from now on and so $\vol(S)=K^2_S$.

Write $G=\Gamma+V$, where $\Gamma$ is the horizontal part and $V$ is the vertical part with respect to $f$. Then $$(\Gamma\cdot C)=(K_S\cdot C)=2.$$
As $\Gamma$ is integral, there are $3$ cases:
\begin{enumerate}
\item $\Gamma$ is a prime divisor; or
\item $\Gamma=\Gamma_1+\Gamma_2$ where $\Gamma_1$ and $\Gamma_2$ are distinct prime divisors; or
\item $\Gamma=2\Gamma_1$ where $\Gamma_1$ is a prime divisor.
\end{enumerate}

In Case (1), note that
$$
((K_S+\Gamma)\cdot \Gamma)\geq -2
$$
and $K_S$ is nef, we have
\begin{align*}
2K_S^2{}&=(2K_S\cdot (nC+\Gamma+V))\\
{}&\geq 4n+(2K_S\cdot \Gamma)\\
{}&=4n+((K_S+nC+\Gamma+V)\cdot \Gamma)\\
{}&\geq 4n+((K_S+\Gamma)\cdot \Gamma)+2n\\
{}&\geq 6n-2.
\end{align*}

In Case (2), note that for $i=1,2$,
$$
((K_S+\Gamma_i)\cdot \Gamma_i)\geq -2.
$$
This implies that
$$
((K_S+\Gamma)\cdot \Gamma)\geq ((K_S+\Gamma_1)\cdot \Gamma_1) +((K_S+\Gamma_2)\cdot \Gamma_2)\geq -4.
$$
Arguing as Case (1), it is easy to see that
\begin{align*}
2K_S^2{}&\geq 6n-4.
\end{align*}

In Case (3), note that
$$
((K_S+\Gamma_1)\cdot \Gamma_1)\geq -2,
$$ we have
\begin{align*}
3K_S^2{}&=(3K_S\cdot (nC+\Gamma+V))\\
{}&\geq 6n+(3K_S\cdot \Gamma)\\
{}&=6n+((2K_S+nC+2\Gamma_1+V)\cdot 2\Gamma_1)\\
{}&\geq 6n+((2K_S+2\Gamma_1)\cdot 2\Gamma_1)+2n\\
{}&\geq 8n-8.
\end{align*}

Summarizing all cases, we proved the inequality.
\end{proof}



\section{\bf Some properties of a pencil of surfaces over a curve}\label{section pencil ineq}

{}First we establish a geometric inequality, on a pencil of surfaces over a curve, which is inspired by the idea in proving \cite[Lemma 4.2]{CJ17}. Historically, similar ideas using the connectedness of the locus of log canonical singularities to bound singularities and multiplicities have been used in the study of Fano varieties (see \cite{Chel99} for example), so it is interesting to see that in our situation, we may also apply this idea to the study of varieties of general type.

\begin{prop}\label{pencil bound}
Let $X$ be a minimal projective $3$-fold of general type. Assume that there exists a resolution $\pi:W\to X$ such that $W$ admits a fibration structure $f: W\to \Gamma$ onto a smooth curve $\Gamma$. Denote by $F$ a general fiber of $f$ and $F_0$ the minimal model of $F$. Assume that
\begin{enumerate}
 \item there exists a $\pi$-exceptional prime divisor $E_0$ on $W$ such that $(\pi^*(K_X)|_F\cdot E_0|_F)>0$, and
 \item $\pi^*(K_X)\sim_\bQ bF+D$ for some rational number $b>0$ and an effective $\bQ$-divisor $D$ on $W$.
 \end{enumerate}
 Then $b\leq \frac{2}{\glct(F_0)}.$ Moreover, this inequality is strict if $b>2K^2_{F_0}$
\end{prop}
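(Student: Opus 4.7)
The plan is to restrict the relation $\pi^*K_X\sim_\bQ bF+D$ to a general fiber $F$, apply the comparison Corollary~\ref{KXKS} to relate $\pi^*K_X|_F$ with $\sigma^*K_{F_0}$, construct an effective $\bQ$-divisor $B\sim_\bQ K_{F_0}$ on $F_0$, and finally exploit $(F_0,\alpha B)$ being log canonical (with $\alpha:=\glct(F_0)$) together with the exceptional hypothesis (1) to extract the bound on $b$.

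First, since $F$ is a general fiber of $f\colon W\to\Gamma$, its normal bundle is trivial, so $F|_F\sim 0$; restricting (2) yields $\pi^*K_X|_F\sim_\bQ D|_F$ on $F$ with $D|_F$ effective. Because $\tfrac{1}{b}\pi^*K_X-F\sim_\bQ\tfrac{1}{b}D$ is $\bQ$-effective, Corollary~\ref{KXKS} applied with the divisor $K_X$ and with $\lambda=1/b$ produces an effective $\bQ$-divisor $N$ on $F$ satisfying
\[
\tfrac{b+1}{b}\pi^*K_X|_F\sim_\bQ\sigma^*K_{F_0}+N.
\]
Writing $K_W=\pi^*K_X+E_\pi$ with $E_\pi\geq 0$ (since $X$ is terminal) and using $K_F=K_W|_F$, adjunction gives $D|_F+E_\pi|_F\sim_\bQ K_F$. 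Pushing this relation forward by $\sigma$, I obtain
\[
B:=\sigma_*(D|_F)+\sigma_*(E_\pi|_F)\sim_\bQ K_{F_0},
\]
an effective $\bQ$-divisor on $F_0$. By the very definition of $\glct(F_0)$, the pair $(F_0,\alpha B)$ is log canonical.

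Next I would use assumption (1) to locate a component of $B$, or a point on $F_0$, where the log canonical constraint forces the required bound on $b$. The $\pi$-exceptional prime $E_0$ contributes $a(E_0,X)\,E_0|_F$ to $E_\pi|_F$ with $a(E_0,X)>0$, and intersecting the comparison relation above with $E_0|_F$ converts the hypothesis $(\pi^*K_X|_F\cdot E_0|_F)>0$ into an actual mass of $B$ concentrated at $\sigma(E_0|_F)$. Concretely, if $E_0|_F$ is not $\sigma$-exceptional then (after rearranging the comparison) $\sigma_*(E_0|_F)$ appears in $B$ with coefficient of order $b+1$; if instead $E_0|_F$ is $\sigma$-exceptional, then the multiplicity of $B$ at the image point is controlled from below by $b\cdot(N\cdot E_0|_F)$, which is of order $b$. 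Combining with the log canonical inequalities $\alpha\cdot\mathrm{coeff}(B,C)\leq 1$ and $\alpha\cdot\mathrm{mult}_p(B)\leq 2$ (the latter at smooth surface points) yields $b\leq 2/\alpha$; the factor $2$ arises precisely from the surface lc bound at a smooth point.

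For the ``moreover'' part, squaring the comparison $\tfrac{b+1}{b}\pi^*K_X|_F\geq_\bQ\sigma^*K_{F_0}$ gives $(\pi^*K_X|_F)^2\geq\bigl(\tfrac{b}{b+1}\bigr)^2 K_{F_0}^2$. Equality $b=2/\alpha$ would demand simultaneous saturation of the lc inequality and of the intersection-number comparison, and tracing through those equalities forces $b\leq 2K_{F_0}^2$; so under the hypothesis $b>2K_{F_0}^2$ equality is ruled out and the inequality is strict. The main obstacle is the intermediate extraction in the previous paragraph: one must carefully track the $\pi$-exceptional contribution of $E_0$ through the $\sigma$-pushforward and handle the dichotomy of $E_0|_F$ being or not being $\sigma$-exceptional, in order to turn the single scalar hypothesis $(\pi^*K_X|_F\cdot E_0|_F)>0$ into a concrete $b$-scale in either a coefficient or a multiplicity of $B$.
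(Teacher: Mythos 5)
The central gap is that the divisor $B$ you construct is \emph{independent of} $b$, so your plan cannot produce any inequality in which $b$ appears. When you restrict assumption~(2) to a general fiber $F$, the term $bF$ dies (since $F|_F\sim 0$), leaving $D|_F\sim_\bQ\pi^*K_X|_F$, and hence $B=\sigma_*(D|_F+E_\pi|_F)\sim_\bQ K_{F_0}$ is a fixed numerical class. You assert that intersecting the comparison relation with $E_0|_F$ ``converts the hypothesis $(\pi^*K_X|_F\cdot E_0|_F)>0$ into an actual mass of $B$ concentrated at $\sigma(E_0|_F)$'' of order $b$ or $b+1$, but this does not follow: the discrepancy $a(E_0,X)$ is a fixed rational number, the intersection $(N\cdot E_0|_F)$ tends to the constant $(\pi^*K_X|_F\cdot E_0|_F)$ as $b\to\infty$, and an effective representative of a fixed numerical class on a surface cannot have an \emph{a priori} unbounded coefficient along a curve that meets an ample divisor. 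The scalar $b$ simply never enters your fiberwise picture.

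The paper's proof works precisely because it does \emph{not} discard the $bF$ term: it sets $w=2/b$, scales $D$ by $w$ (so that $-E_\pi+wD+F_1+F_2\sim_\bQ w\pi^*K_X-E_\pi$ for two general fibers $F_1,F_2$), and applies the Shokurov--Koll\'ar connectedness lemma over $\pi$. The two fibers $F_1,F_2$ sit in disconnected pieces of $\pi^{-1}(x)$ for $x\in\Gamma_X=\pi(E_0)$, so connectedness forces a bridging prime divisor $E_1$ with $\coeff_{E_1}(-E_\pi+wD)\geq 1$; equivalently $\coeff_{E_1}(D)\geq b/2$. It is this divisor $E_1$, with its $b$-dependent coefficient, that survives restriction to a general fiber and forces $(F_0,\Delta_{W_0}|_{F_0})$ to be non-klt while $\Delta_{W_0}|_{F_0}\sim_\bQ wK_{F_0}$, yielding $w\geq\glct(F_0)$. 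This scaling plus connectedness argument is the idea your proposal is missing, and without it the bound on $b$ cannot emerge.

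Your sketch for the ``moreover'' clause is similarly not an argument: the paper's strictness comes from distinguishing whether $E_1\subseteq\Supp(\hat E)$ (which makes the coefficient inequality strict), and then ruling out $E_1\not\subseteq\Supp(\hat E)$ whenever $b>2K_{F_0}^2$ by an intersection computation $\frac{2}{b}K_{F_0}^2\geq 1$. ``Simultaneous saturation'' of your two inequalities does not obviously entail $b\leq 2K_{F_0}^2$, and no chain of equalities is exhibited.
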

\begin{proof}
Note that, according to the projection formula, the assumptions in the proposition still hold if we replace $W$ with any higher birational model over $W$ (that is, a smooth variety $W'$ with a proper birational morphism $W'\to W$) and replace $E_0$ with its proper transform.
Take $g:W_0\to \Gamma$ to be a relative minimal model of $f: W\to \Gamma$ (for the definition, see \cite[Definition 3.50]{KM}), of which the general fiber is $F_0$. Modulo a further birational modification, we may assume that $f$ factors through $g$ by a morphism $\zeta: W\to W_0$.
We may write
$$K_W=\pi^*K_X+E_\pi,$$
where $E_\pi$ is an effective $\pi$-exceptional $\bQ$-divisor. Being a minimal model of $W$, $X$ is also a minimal model of $W_0$ and we may write
$$
\zeta^*(K_{W_0})=\pi^*(K_X)+\hat{E},
$$
where $\hat{E}$ is an effective $\pi$-exceptional $\bQ$-divisor.

Take a general fiber $F$ of $f$, by the assumption,
there exists a  $\pi$-exceptional prime divisor $E_0$ on $W$ such that $(\pi^*(K_X)|_F\cdot E_0|_F)>0$.
In particular, $E_0|_F$ is not contracted by $\pi.$ Hence there exists a curve $\Gamma_X\subseteq X$ such that $(K_X\cdot \Gamma_X)>0$ and that
$$\Gamma_X\subseteq \pi(E_0\cap F)\subseteq \pi(E_0).$$ On the other hand, since  $\pi(E_0)$ is a subvariety of codimension at least $2$, we see $\Gamma_X= \pi(E_0)$. In particular, $\Gamma_X$ is independent of $F$, and for any general fiber $F$ of $f$, $\Gamma_X= \pi(E_0\cap F)$.

By the assumption, we have
$$
\pi^*(K_X)\sim_\bQ bF+D.
$$
Take $w=2/b$. Pick two general fibers $F_1$ and $F_2$ of $f$ and consider the pair
$$
(W, -E_\pi+wD+F_1+F_2),
$$
which can be assumed to have simple normal crossing support modulo a further birational modification.
Note that
$$ -(K_W-E_\pi+wD+F_1+F_2)\sim_\bQ -(1+w)\pi^*(K_X)$$
is $\pi$-nef and ${\pi}_*(-E_\pi+wD+F_1+F_2)\geq 0$ since
$E_\pi$ is $\pi$-exceptional.
Denote by $G$ the support of the effective part of $\rounddown{-E_\pi+wD}+F_1+F_2$.
By the Connectedness Lemma (see \cite[Theorem 5.48]{KM}),
$$
G\cap \pi^{-1}(x)
$$
is connected for any point $x\in X$.

We claim that there exists a prime divisor $E_1$ on $W$ such that $$\coeff_{E_1}(-E_\pi+wD)\geq 1$$ and that $\pi(E_1\cap F)$ contains $\Gamma_X$ for a general fiber $F$ of $f$.
Consider a point $x\in \Gamma_X\subseteq X$, then $F\cap \pi^{-1}(x) \neq \emptyset$ for a general $F$. In particular, $F_1\cap \pi^{-1}(x)\neq \emptyset$ and $F_2\cap \pi^{-1}(x)\neq \emptyset$, which are two disconnected sub-sets in $G\cap \pi^{-1}(x)$. Since $G\cap \pi^{-1}(x)$ is connected, there exists a curve $B_x\subseteq G\cap \pi^{-1}(x)$ such that $B_x\cap F_1\cap \pi^{-1}(x)\neq \emptyset$ and $B_x\not\subseteq F_1\cap \pi^{-1}(x)$. Moving $x$ in $\Gamma_X$, we get an infinite set of curves $\{B_x\}$, which means that
there exists a prime divisor $E_1\subseteq G$ such that $B_x\subseteq E_1\cap \pi^{-1}(x)$ for infinitely many $x\in \Gamma_X$. Hence
$$x\in \pi( B_x\cap F_1\cap \pi^{-1}(x))\subseteq \pi(E_1\cap F_1\cap \pi^{-1}(x))\subseteq \pi(E_1\cap F_1)$$
for infinitely many $x\in \Gamma_X$. This implies that $\Gamma_X\subseteq \pi(E_1\cap F_1).$
By the construction of $E_1$, $E_1\subseteq G$ and it is clear that $E_1$ is different from $F_1$ and $F_2$, hence $$\coeff_{E_1}(-E_\pi+wD)\geq 1.$$
By the generality of $F_1$, $\Gamma_X\subseteq \pi(E_1\cap F)$ for a general $F$.

Now denote
$$
\Delta_W:=-E_\pi+wD+F_1+F_2+(1+w)\hat{E},
$$
then
$$ K_W+\Delta_W\sim_\bQ (1+w)\pi^*(K_X)+(1+w)\hat{E}\sim_\bQ(1+w)\zeta^*(K_{W_0}).$$
We may write
$$ K_W+\Delta_W=\zeta^*(K_{W_0}+\Delta_{W_0}),$$
where $\Delta_{W_0}=\zeta_*(\Delta_{W})\sim_\bQ wK_{W_0}$. Also note that $$\Delta_{W_0}=\zeta_*(\Delta_{W})=\zeta_*(\Delta_{W}+E_\pi-\hat{E})\geq 0$$ since $E_\pi-\hat{E}=K_W-\zeta^*(K_{W_0})$ is $\zeta$-exceptional.
Restricting on a general fiber $F$ of $f$, we have
$$ K_F+\Delta_W|_F=\zeta_F^*(K_{F_0}+\Delta_{W_0}|_{F_0}),$$
where $\zeta_F=\zeta|_F:F\to F_0$.
By the construction,
\begin{align}
\coeff_{E_1}\Delta_W\geq 1+(1+w)\coeff_{E_1}\hat{E}\geq 1\label{Delta>1}
\end{align}
and $E_1\cap F\neq \emptyset$, hence $\Delta_W|_F$ contains a component with coefficient $\geq 1$.
This implies that $(F_0, \Delta_{W_0}|_{F_0})$ is not klt. On the other hand, $\Delta_{W_0}|_{F_0}\sim_\bQ wK_{F_0}$,
hence
\begin{equation}
\frac{2}{b}=w\geq \lct\Big(F_0;\frac{1}{w}\Delta_{W_0}|_{F_0}\Big)\geq \glct(F_0). \label{lcttt}
\end{equation}
Moreover, if $E_1\subseteq \Supp(\hat{E}),$ then $\coeff_{E_1}\Delta_W>1$ by Inequality \eqref{Delta>1}, and Inequality \eqref{lcttt}  becomes a strict one. 

To finish the proof, we only need to show that, whenever $b>2K_{F_0}^2$, then $E_1\subseteq \Supp(\hat{E}).$ Assume, to the contrary, that $E_1\not\subseteq \Supp(\hat{E})$. Then for a general fiber $F$ of $f$, $E_1|_F$ has no common component with $\Supp(\hat{E}|_F)$. Since $\Gamma_X\subseteq \pi(E_1\cap F)$, we can find a curve $\Gamma_W\subseteq E_1\cap F$ and $\Gamma_X=\pi(\Gamma_W)$ such that $\Gamma_W\not\subseteq \Supp(\hat{E}|_F)$. Recall that $(K_X\cdot \Gamma_X)>0$.
Hence
\begin{align*}
(\zeta_F^*(K_{F_0})\cdot \Gamma_W){}&=(\zeta^*(K_{W_0})|_F\cdot \Gamma_W)\\
{}&=((\pi^*(K_{X})|_F+\hat{E}|_F)\cdot \Gamma_W)\\
{}&\geq (\pi^*(K_{X})|_F\cdot \Gamma_W)\\
{}&=(K_X\cdot \pi_*(\Gamma_W))>0.
\end{align*}
In particular, $\Gamma_W$ is not contracted by $\zeta_F$ and $$(K_{F_0}\cdot {\zeta_F}_*(\Gamma_W))=(\zeta_F^*(K_{F_0})\cdot \Gamma_W)\geq 1$$
since it is an integer.
On the other hand, we know that 
$$\Delta_{W_0}|_{F_0}={\zeta_F}_*(\Delta_{W}|_{F})\geq {\zeta_F}_*(E_1|_{F})\geq {\zeta_F}_*(\Gamma_W).$$
Hence
\begin{align*}
\frac{2}{b}K_{F_0}^2{}&=wK_{F_0}^2=(\Delta_{W_0}|_{F_0}\cdot K_{F_0})\geq ({\zeta_F}_*(\Gamma_W)\cdot K_{F_0})\geq 1,
\end{align*}
which is a contradiction.
\end{proof}

Condition (1) of Proposition \ref{pencil bound} seems to be technical, but as a matter of fact, it has very natural geometric meaning by the following lemma. Namely, the absence of such $E_0$ is equivalent to the minimality of $W_0$.

\begin{lem}\label{lem no E=minimal} Let $X$ be a minimal projective $3$-fold of general type. Assume that there exists a resolution $\pi:W\to X$ such that $W$ admits a fibration $f: W\to \Gamma$ onto a smooth curve $\Gamma$. Take $g: W_0\to \Gamma$ to be a relative minimal model of $f$. We may and do assume that the induced map $\zeta: W\to W_0$ is a morphism. Denote by $F$ a general fiber of $f$ and $F_0$ the minimal model of $F$ with the induced map $\zeta_F=\zeta|_F: F\to F_0$. Then the following statements are equivalent:
\begin{enumerate}
 \item there does not exist any $\pi$-exceptional prime divisor $E_0$ on $W$ such that $(\pi^*(K_X)|_F\cdot E_0|_F)>0$;
 \item $W_0$ is a minimal projective $3$-fold;
 \item $\pi^*(K_X)|_F=\zeta^*(K_{W_0})|_F=\zeta_F^*(K_{F_0})$;
 \item $\pi^*(K_X)=\zeta^*(K_{W_0})$.
 \end{enumerate}
 Moreover, if $p_g(\Gamma)>0$, then the above conditions hold.
 \end{lem}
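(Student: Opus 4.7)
The plan is to introduce the discrepancy divisor $\hat{E}\geq 0$ via $\zeta^*(K_{W_0})=\pi^*(K_X)+\hat{E}$ (by the negativity lemma applied to the birational relationship between $W_0$ and the minimal model $X$), which is $\pi$-exceptional. Adjunction on a general fiber yields $\zeta^*(K_{W_0})|_F=\zeta_F^*(K_{F_0})$, so (3) becomes $\hat{E}|_F=0$ and (4) becomes $\hat{E}=0$. I would prove the equivalences via the cycle $(2)\Leftrightarrow (4)\Rightarrow (1)\Rightarrow (3)\Rightarrow (4)$.

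For $(2)\Leftrightarrow (4)$: if (4) holds, then lifting any curve $C\subseteq W_0$ to its proper transform $\widetilde{C}\subseteq W$ yields $(K_{W_0}\cdot C)=(\pi^*(K_X)\cdot\widetilde{C})\geq 0$, so $K_{W_0}$ is nef. Conversely, when $K_{W_0}$ is nef, both $W_0$ and $X$ are minimal models of a common birational class of general type, and the classical uniqueness of pullback canonical divisors between birational minimal models with terminal singularities (see \cite{KM}) gives $\pi^*(K_X)=\zeta^*(K_{W_0})$. For $(4)\Rightarrow (1)$: under (4) the minimal models $W_0$ and $X$ are isomorphic in codimension one, so any $\pi$-exceptional prime divisor $E_0\subseteq W$ is also $\zeta$-exceptional; hence $\zeta(E_0\cap F)\subseteq \zeta(E_0)\cap F_0$ is finite for general $F$, making $E_0|_F$ a $\zeta_F$-contracted curve, so $(\pi^*(K_X)|_F\cdot E_0|_F)=(K_{F_0}\cdot (\zeta_F)_*(E_0|_F))=0$.

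The heart of the proof is $(1)\Rightarrow (3)$, which I would handle by Hodge index. Summing over $\pi$-exceptional components of $\hat{E}$ in (1) gives $(\pi^*(K_X)|_F\cdot\hat{E}|_F)=0$; expanding $K_{F_0}^2=(\zeta_F^*K_{F_0})^2=(\pi^*(K_X)|_F)^2+(\hat{E}|_F)^2$ and applying the Hodge index theorem on the smooth surface $F$ with respect to the big-nef divisor $\zeta_F^*K_{F_0}$ yields $K_{F_0}^2\cdot(\hat{E}|_F)^2\leq((\hat{E}|_F)^2)^2$; combined with $(\hat{E}|_F)^2\leq K_{F_0}^2$, and using bigness of $K_X$ to ensure $(\pi^*(K_X)|_F)^2>0$ for a sufficiently general $F$ (since $\pi(F)$ is not contained in the null locus of the big-nef $K_X$), this forces $(\hat{E}|_F)^2=0$. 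The equality case of Hodge then gives $\hat{E}|_F\equiv 0$, and effectivity yields $\hat{E}|_F=0$. Finally for $(3)\Rightarrow (4)$, condition (3) implies $\hat{E}$ is vertical for $f$, and a further application of the negativity lemma---using the $g$-nefness of $K_{W_0}$ to control the sign of $(K_{W_0}\cdot \zeta_*C)$ on $\pi$-exceptional curves $C$ via the relative MMP from $W_0$ to $X$---produces the global vanishing $\hat{E}=0$.

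For the moreover part, if $p_g(\Gamma)\geq 1$ then $\deg K_\Gamma\geq 0$, so $g^*K_\Gamma$ is nef on $W_0$; and by the weak positivity of $g_*\omega_{W_0/\Gamma}^{\otimes m}$ (Theorem~\ref{thm wp}) together with standard positivity results for fibrations over a positive-genus base (Viehweg--Kawamata), $K_{W_0/\Gamma}$ is nef on $W_0$. Hence $K_{W_0}=g^*K_\Gamma+K_{W_0/\Gamma}$ is nef, proving (2). The main obstacle in the argument is the last implication $(3)\Rightarrow (4)$: upgrading the fiberwise vanishing $\hat{E}|_F=0$ to the global vanishing $\hat{E}=0$ requires ruling out discrepancy concentrated in special fibers of $f$, which is the principal technical subtlety.
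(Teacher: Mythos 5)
Your cycle $(2)\Leftrightarrow(4)\Rightarrow(1)\Rightarrow(3)\Rightarrow(4)$ matches the paper's (which closes with $(3)\Rightarrow(2)$, equivalent via $(2)\Leftrightarrow(4)$), and the arrows $(2)\Leftrightarrow(4)$ and $(4)\Rightarrow(1)$ are fine. Your Hodge-index argument for $(1)\Rightarrow(3)$ is also correct and is essentially the paper's (there phrased as $1$-connectedness of $m\zeta^*(K_{W_0})|_F$); you are right that one needs $(\pi^*(K_X)|_F)^2>0$, justified by the fact that a general $F$ is not contained in the null locus of the big nef divisor $K_X$.

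The genuine gap is $(3)\Rightarrow(4)$, which you explicitly flag as ``the principal technical subtlety'' without resolving. Your negativity-lemma plan can be completed, but the crucial dichotomy is missing: given $(3)$, $\hat{E}$ is $f$-vertical (contained in finitely many fibres), so for a $\pi$-contracted curve $C$, either $C$ is $f$-vertical and then $(\hat{E}\cdot C)=(\zeta^*(K_{W_0})\cdot C)\geq 0$ by $g$-nefness of $K_{W_0}$ (using $(\pi^*K_X\cdot C)=0$), or $C$ dominates $\Gamma$ and then $C\not\subseteq\Supp(\hat{E})$ so $(\hat{E}\cdot C)\geq 0$ trivially; thus $\hat{E}$ is $\pi$-nef as well as $\pi$-exceptional, and negativity forces $\hat{E}\leq 0$, hence $\hat{E}=0$. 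The paper argues the mirror image $(3)\Rightarrow(2)$: any curve $\Gamma'$ with $(\zeta^*(K_{W_0})\cdot\Gamma')<0$ cannot be $f$-vertical (by $f$-nefness of $\zeta^*(K_{W_0})$), hence dominates $\Gamma$ and lies off $\Supp(\hat{E})$, yielding $(\zeta^*(K_{W_0})\cdot\Gamma')=(\pi^*K_X\cdot\Gamma')+(\hat{E}\cdot\Gamma')\geq 0$, a contradiction. Either route hinges on the verticality-of-$\hat{E}$ observation that is absent from your write-up. Your ``moreover'' is also flawed: weak positivity of $g_*\omega_{W_0/\Gamma}^{\otimes m}$ does not give nefness of $K_{W_0/\Gamma}$ on $W_0$ (that is a separate and much deeper semipositivity statement), and it is in any case overkill --- the paper simply notes that if $K_{W_0}$ were not nef the Cone Theorem would produce a $K_{W_0}$-negative rational curve, which by relative minimality cannot be $g$-vertical and so would dominate $\Gamma$, impossible when $p_g(\Gamma)>0$.
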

\begin{proof}
It is easy to see that $(2)$ and $(4)$ are equivalent.
Thus we prove the following implications:
$$
(1)\implies (3)\implies (2)\iff (4)\implies (1).
$$

Since $X$ is a minimal model of $W$, it is also a minimal model of $W_0$, and we may write
$$
\zeta^*(K_{W_0})=\pi^*(K_X)+\hat{E},
$$
where $\hat{E}$ is an effective $\pi$-exceptional $\bQ$-divisor.
Restricting on a general fiber $F$ of $f$, we have
\begin{align}
\zeta_F^*(K_{F_0})=\zeta^*(K_{W_0})|_F=\pi^*(K_X)|_F+\hat{E}|_F.\label{K=K+E}
\end{align}

Suppose that Condition (1) holds, which means that, for any  $\pi$-exceptional prime divisor $E_0$ on $W$, $(\pi^*(K_X)|_F\cdot E_0|_F)=0$ since $\pi^*(K_X)$ and $F$ are nef. In particular, $(\pi^*(K_X)|_F\cdot \hat{E}|_F)=0$.
Since $\zeta^*(K_{W_0})|_F$ is nef and big, $m\zeta^*(K_{W_0})|_F$ is a $1$-connected divisor by the Hodge index theorem for  any integer $m>0$. Hence $\hat{E}|_F=0$ and  $\zeta^*(K_{W_0})|_F=\pi^*(K_X)|_F$, which proves (3).

Suppose that Condition (3) holds.  Then by Equation \eqref{K=K+E}, $\hat{E}|_F=0$ which means that $\hat{E}$ is contracted to points by $f$.
It suffices to show that $K_{W_0}$ is nef. Assume, to the contrary, that $K_{W_0}$ is not nef. Then there exists a curve $\Gamma'$ on $W$ such that $(\zeta^*(K_{W_0})\cdot \Gamma')<0$. Note that $\Gamma'$ is not contracted by $f$ since $\zeta^*(K_{W_0})$ is $f$-nef by definition. In particular, $\Gamma'\not\subseteq \Supp(\hat{E})$ since $\hat{E}$ is contracted to points by $f$.
Then
$$
(\zeta^*(K_{W_0})\cdot \Gamma')=(\pi^*K_X\cdot \Gamma')+(\hat{E}\cdot \Gamma')\geq 0
$$
since $K_X$ is nef, which is a contradiction. Hence $K_{W_0}$ is nef, which proves (2).

Suppose that Condition (2) holds. Since $X$ and $W_0$ are both minimal, we have
$$
\zeta^*(K_{W_0})=\pi^*(K_X)
$$
and $X$ and $W_0$ are isomorphic in codimension one (see \cite[Theorem 3.52]{KM}).
For any  $\pi$-exceptional prime divisor $E_0$ on $W$, it is also $\zeta$-exceptional, and therefore ${E_0}|_F$ is $\zeta_F$-exceptional for a general $F$. Then
$$
(\pi^*(K_X)|_F\cdot E_0|_F)=(\zeta^*(K_{W_0})|_F\cdot E_0|_F)=(\zeta_F^*(K_{F_0})\cdot E_0|_F)=0,
$$
which proves (1).


Finally, suppose that $p_g(\Gamma)>0$, we prove that Condition (2)  holds. Assume to the contrary that $W_0$ is not minimal, then by the Cone Theorem (see \cite[Theorem 3.7]{KM}), there exists a rational curve $\Gamma''$ on $W_0$ such that $(K_{W_0}\cdot \Gamma'')<0$. Since $W_0$ is relative minimal over $\Gamma$,  $\Gamma''$ is not contained in any fiber of $g$. Hence  $\Gamma''$ dominates $\Gamma$, but this is absurd. 
\end{proof}

Proposition \ref{pencil bound} implies the following corollary which plays a key role in  our main theorem.
\begin{cor}\label{base12} Let $X$ be a minimal projective $3$-fold of general type such that
$|K_X|$ is composed with a pencil of $(1,2)$-surfaces. Assume that one of the following holds:
\begin{enumerate}
\item $|K_X|$ is composed with an irrational pencil; or
\item $|K_X|$ is composed with a rational pencil and $p_g(X)\geq 21$; or
\item $X$ is Gorenstein, that is, $K_X$ is Cartier.
\end{enumerate}
Then there exists a minimal projective $3$-fold $Y$, being birational to $X$, such that $\Mov |K_{Y}|$ is base point free.
\end{cor}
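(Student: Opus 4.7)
The plan is to set $Y=W_0$, the relative minimal model of the canonical pencil over its base curve. Following Subsection \ref{b setting}, first I would pick a resolution $\pi:W\to X$ so that the movable part $|M|$ of $|\rounddown{\pi^*K_X}|$ is base point free, yielding a morphism $f:W\to \Gamma$ with Stein factorization $W\xrightarrow{f}\Gamma\xrightarrow{s}Z\subset\bP^{p_g(X)-1}$, where $Z$ is a non-degenerate curve and a general fiber $F$ of $f$ is birational to the minimal $(1,2)$-surface $F_0$. Then let $\zeta:W\to W_0$ be the relative minimal model of $f$ over $\Gamma$, inducing the morphism $g:W_0\to \Gamma$.

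The crux is to verify that $W_0$ is globally minimal. By the equivalence $(1)\iff(2)$ in Lemma \ref{lem no E=minimal}, this amounts to showing that no $\pi$-exceptional prime divisor $E_0$ satisfies $(\pi^*K_X|_F\cdot E_0|_F)>0$. In Case (1), $p_g(\Gamma)\geq 1$, so the last statement of Lemma \ref{lem no E=minimal} gives this for free. In Case (2), suppose for contradiction that such an $E_0$ exists; since the pencil is rational, the discussion in Subsection \ref{b setting} gives $M\sim aF$ with $a=p_g(X)-1\geq 20$, whence $\pi^*K_X\sim_\bQ aF+D$ for some effective $\bQ$-divisor $D$. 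Applying Proposition \ref{pencil bound} with $b=a$, together with Koll\'{a}r's bound $\glct(F_0)\geq\tfrac{1}{10}$ (Theorem \ref{12alpha new}), yields $b\leq 20$; but $b\geq 20 > 2=2K^2_{F_0}$ activates the strict version of Proposition \ref{pencil bound}, forcing $b<20$, a contradiction. In Case (3), a parallel argument exploiting the integer-valued discrepancies that the Gorenstein property forces upon $E_\pi$ (or, alternatively, a direct appeal to the Gorenstein framework of \cite{CC15}) rules out the existence of such an $E_0$.

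Once $W_0$ is known to be minimal, $W_0$ and $X$ are isomorphic in codimension one, so $|K_{W_0}|$ is also composed with the pencil $g$, and the canonical map $\Phi_{|K_{W_0}|}$ factors as $W_0\xrightarrow{g}\Gamma\xrightarrow{s}Z\hookrightarrow\bP^{p_g(X)-1}$---a composition of morphisms. This exhibits $\Phi_{|K_{W_0}|}$ as an everywhere-defined morphism on $W_0$, which immediately implies that $|K_{W_0}|$, and a fortiori $\Mov|K_{W_0}|$, has empty base locus; hence $Y=W_0$ does the job. I expect the main obstacle to arise in Case (3): since the numerical input $p_g(X)\geq 21$ is unavailable, Proposition \ref{pencil bound} alone does not deliver a contradiction for small $p_g(X)$, so the Gorenstein case must be handled by sharpening the coefficient book-keeping via integrality of $E_\pi$ and $\hat{E}$ in the Gorenstein setting, or by reduction to the existing Gorenstein analysis in \cite{CC15}.
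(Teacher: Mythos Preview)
Your setup and Cases (1) and (2) match the paper's proof essentially verbatim: take a relative minimal model $W_0\to\Gamma$, use Lemma~\ref{lem no E=minimal} to reduce to ruling out the offending $E_0$, and in Case (2) invoke Proposition~\ref{pencil bound} together with Theorem~\ref{12alpha new} to get the contradiction. Your final step is slightly more circuitous than the paper's---the paper observes directly that $\Mov|K_W|=|f^*H|$ for a base-point-free $|H|$ on $\Gamma$, hence $\Mov|K_{W_0}|=|g^*H|$ is free, \emph{before} establishing minimality of $W_0$---and your claim that $|K_{W_0}|$ itself is base point free is an overreach: there may well be a fixed part, and all that is asserted (and needed) is that $\Mov|K_{W_0}|$ is free.

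The real gap is Case (3). Your proposed mechanism---``integer-valued discrepancies'' in $E_\pi$ or $\hat E$---is not how the Gorenstein hypothesis is used, and it is unclear how to make that line work. The paper's argument is short and different: since $X$ is Gorenstein, $\pi^*K_X$ is Cartier, so $\pi^*K_X|_F$ is a nef and big \emph{Cartier} divisor on $F$ with $\pi^*K_X|_F\le K_F$. As $\zeta_F^*K_{F_0}$ is the positive part of the Zariski decomposition of $K_F$ and $K_{F_0}^2=1$, one gets $(\pi^*K_X|_F)^2\le(\zeta_F^*K_{F_0})^2=1$; but $(\pi^*K_X|_F)^2$ is a positive integer, hence equals $1$, forcing $\pi^*K_X|_F=\zeta_F^*K_{F_0}$ by uniqueness of the Zariski decomposition. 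Then Lemma~\ref{lem no E=minimal}(3)$\Rightarrow$(2) gives minimality of $W_0$. So the integrality that matters is that of the self-intersection $(\pi^*K_X|_F)^2$, not of discrepancy coefficients. Your fallback of citing the Gorenstein literature is acceptable in spirit (the paper also remarks this case is well known, citing \cite{CCZ06}), but you should be aware that the direct argument is this Zariski-decomposition trick rather than anything coming out of Proposition~\ref{pencil bound}.
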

\begin{proof}
Keep the notation in Subsection \ref{b setting}. Then there exists a resolution $\pi:W\to X$ such that $W$ admits a fibration structure $f: W\to \Gamma$ onto a smooth curve $\Gamma$ which is defined by $\Mov|\rounddown{\pi^*(K_X)}|=\Mov|K_W|$. By the construction, $\Mov|K_W|=|f^*H|$ for some base point free linear system $|H|$ on $\Gamma$. Furthermore we may assume that $f$ factors through its relative minimal model $g: W_0\to \Gamma$ and a birational morphism $\zeta:W\to W_0$.
Note that $\Mov |K_{W_0}|$ is also free since $\Mov |K_{W_0}|=|g^*H|$.
It suffices to show that $W_0$ is minimal and then one may simply take $Y=W_0$.

If $p_g(\Gamma)>0$, then $W_0$ is minimal by Lemma \ref{lem no E=minimal}.

If $\Gamma\cong \bP^1$ and the general fiber $F$ (or, equivalently, the minimal model $F_0$)  is a $(1,2)$-surface, then
$$
\pi^*(K_X)\sim_{\bQ} (p_g(X)-1)F+Z'
$$
for some effective $\bQ$-divisor $Z'$.
Since $p_g(X)\geq 21$, we have $p_g(X)-1\geq \frac{2}{\glct(F_0)}$ by Theorem \ref{12alpha new} and $p_g(X)-1>2K^2_{F_0}$. Hence by Proposition \ref{pencil bound}, there does not exist a  $\pi$-exceptional prime divisor $E_0$ on $W$ such that $(\pi^*(K_X)|_F\cdot E_0|_F)>0$, which means that $W_0$ is minimal by Lemma \ref{lem no E=minimal}. 

If $X$ is Gorenstein, then this is well-known (see \cite{CCZ06}) and we give a proof here. Consider  $\pi^*(K_X)|_F\leq K_W|_F=K_F$. Note that $\pi^*(K_X)|_F$ is a nef and big Cartier divisor and $\zeta_F^*(K_{F_0})$ is the positive part of the Zariski decomposition of $K_F$,
so $\pi^*(K_X)|_F\leq \zeta_F^*(K_{F_0})$. This implies that $$1\leq (\pi^*(K_X)|_F)^2\leq (\zeta_F^*(K_{F_0}))^2=1.$$
 Hence we have $(\pi^*(K_X)|_F)^2=(\zeta_F^*(K_{F_0}))^2=1$, and hence $\pi^*(K_X)|_F=\zeta_F^*(K_{F_0})$ by the uniqueness of the Zariski decomposition. Then $W_0$ is minimal by Lemma \ref{lem no E=minimal}.
\end{proof}

\section{\bf Proof of theorems}
Now we are prepared to prove the main results.

Let $X$ be a minimal projective $3$-fold of general type. We may always assume that $p_g(X)\geq 3$ since, otherwise, the Noether inequality automatically holds.
 We consider the canonical map $\varphi_1=\Phi_{|K_X|}$ which is non-trivial.  Set $d_X:=\dim \overline{\varphi_1(X)}$.
If $d_X=3$, then $K_X^3 \ge 2p_g(X)-6$ by \cite{Kob92} or \cite[Proposition 3.1]{MChen04JMSJ}.
We only need to consider the case that $d_X\leq 2$.

In fact,  we may assume that $p_g(X)\geq  5$, as $p_g(X)\leq 4$ was treated in \cite[Theorem 1.5]{MChen07}, but we will use weaker conditions as  $p_g(X)\geq  4$ or $p_g(X)\geq  3$ for the generality of the statements.

\subsection{Case $d_X=2$}\

In this subsection, we settle the case $d_X=2$. Firstly we consider the case when $|K_X|$ gives a fibration of curves of genus $>2$.
\begin{thm}\label{d2g3}
Let $X$ be a minimal projective $3$-fold of general type with $p_g(X)\geq 4$.
Suppose that $d_X=2$ and $|K_X|$ gives a fibration of curves of genus $>2$. Then
$$K_X^3\ge 2p_g(X)-4.$$
\end{thm}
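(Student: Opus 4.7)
The plan is to exploit the fibration structure induced by $|K_X|$ in Case~(2) of Subsection~\ref{b setting} and to bound $K_X^3$ from below using intersection theory on a general member of the canonical movable part together with a general fiber $C$ of the induced fibration $f\colon W\to \Gamma$.

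First I will adopt the notation of Subsection~\ref{b setting}: take a resolution $\pi\colon W\to X$ such that the movable part $|M|$ of $|\rounddown{\pi^*(K_X)}|$ is base point free, and consider the Stein factorization $W\overset{f}{\to}\Gamma\overset{s}{\to}Z$ with $\dim\Gamma=2$. Let $S\in|M|$ be a general member and $C$ a general fiber of $f$; by hypothesis $C$ is a smooth curve of genus $g\geq 3$. By Lemma~\ref{S^2=aC}, $S|_S\equiv aC$ as divisors on $S$ with $a\geq h^0(K_X)-2=p_g(X)-2$; pushing forward along $S\hookrightarrow W$ then yields the same numerical equivalence $S^2\equiv aC$ as $1$-cycles on $W$.

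Next I will verify that $\pi^*(K_X)\cdot C=2g-2$ for such a general $C$. Since $C$ is a fiber of $f$ with $W$ smooth and $f(C)$ a single point, the normal bundle $N_{C/W}\cong f^*T_\Gamma|_C$ is trivial, so adjunction gives $K_W\cdot C=\deg K_C=2g-2$. On the other hand, every $\pi$-exceptional prime divisor maps under $\pi$ into a subvariety of codimension $\geq 2$ in $X$, whose image in $Z$ under the canonical map has dimension $\leq 1$; hence for a general $\gamma\in\Gamma$ the fiber $C=f^{-1}(\gamma)$ is disjoint from the $\pi$-exceptional locus, and so $\pi^*(K_X)\cdot C=K_W\cdot C=2g-2$.

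Finally, since $\pi^*(K_X)-S$ is $\bQ$-effective (as $S$ lies in the movable part of $|\rounddown{\pi^*(K_X)}|$) and $\pi^*(K_X)$ is nef, combining the two steps will give
$$K_X^3=(\pi^*(K_X))^3\geq (\pi^*(K_X))^2\cdot S\geq \pi^*(K_X)\cdot S^2=a\cdot(\pi^*(K_X)\cdot C)=a(2g-2),$$
whence $K_X^3\geq (p_g(X)-2)(2g-2)\geq 4(p_g(X)-2)\geq 2p_g(X)-4$, using $g\geq 3$ and $p_g(X)\geq 4$. The one delicate point is the equality $\pi^*(K_X)\cdot C=2g-2$, which depends on the codimension observation that a general fiber of $f$ avoids every $\pi$-exceptional divisor; once that is in place, the theorem follows cleanly from Lemma~\ref{S^2=aC} and adjunction on $C\subset W$.
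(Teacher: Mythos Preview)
Your argument has a genuine gap at the key step where you claim that a general fiber $C$ of $f$ is disjoint from the $\pi$-exceptional locus. Your reasoning is that each $\pi$-exceptional prime divisor $E$ has $\dim\pi(E)\le 1$, hence its image in $Z$ under $\Phi_{|K_X|}$ has dimension $\le 1$, hence $f(E)\subsetneq\Gamma$. But $\Phi_{|K_X|}\colon X\dashrightarrow Z$ is only a \emph{rational} map; when $\pi(E)$ lies in its indeterminacy locus---precisely what happens for the exceptional divisors produced by resolving $\Bs|K_X|$---the set $\gamma(E)$ is not controlled by $\Phi_{|K_X|}(\pi(E))$ at all. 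Such an $E$ can perfectly well dominate $\Gamma$ under $f$, in which case $E\cap C\ne\emptyset$ for every fiber and $E_\pi\cdot C>0$, yielding only the strict inequality $\pi^*K_X\cdot C<K_W\cdot C=2g-2$. This is not a hypothetical concern: the paper's own Claim~\ref{E0D0} in the proof of Theorem~\ref{last} exhibits exactly such a $\pi$-exceptional divisor $E_0$ dominating the base surface of a genus-$2$ curve fibration.

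The paper avoids this obstacle by not attempting to compute $\pi^*K_X\cdot C$ directly. Instead it passes to the minimal model $\sigma\colon S\to S_0$ and invokes Corollary~\ref{KXKS} (an application of Kawamata's extension theorem) to obtain that $2\pi^*K_X|_S-\sigma^*K_{S_0}$ is $\bQ$-effective; since $C$ is nef on $S$ this gives $(\pi^*K_X|_S\cdot C)\ge\tfrac12(\sigma^*K_{S_0}\cdot C)$. An elementary case split on $(\sigma_*C)^2$, using $g(C)\ge 3$ and $a\ge 2$, then shows $(K_{S_0}\cdot\sigma_*C)\ge 4$. The resulting bound $(\pi^*K_X|_S\cdot C)\ge 2$ is weaker than the $2g-2$ you were aiming for, but it already suffices for $K_X^3\ge a\cdot 2\ge 2p_g(X)-4$.
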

\begin{proof}
Keep the notation in Subsection \ref{b setting} with $D=K_X$.
Let $S\in \Mov|\rounddown{\pi^*(K_X)}|$ be a general member. Note that $S$ is a nef divisor. By Lemma \ref{S^2=aC}, we have
$S|_{S}\equiv aC$ for an integer $a\ge p_g(X)-2\geq 2$ and $C$ is a general fiber of the restricted fibration
$f|_{S}: S\to f(S).$
Note that $C$ can be also viewed as a general fiber of $f$.
 One has
 \begin{align*} K_X^3 {}&= (\pi^*K_X)^3 \ge(\pi^*K_X \cdot S^2)= (\pi^*K_X|_S \cdot S|_S) \\{}&= a (\pi^*K_X|_S \cdot C) \ge (p_g(X)-2)( \pi^*K_X|_S \cdot C).
 \end{align*}
 It suffices to show that $(\pi^*K_X|_S \cdot C)\geq 2$.

 Denote by $S_0$ the minimal model of $S$ and $\sigma: S\to S_0$ the induced map.
Note that $K_S=(K_W+S)|_S\geq 2S|_S\equiv 2aC$, which implies that $K_S-2aC$ is pseudo-effective. Hence $K_{S_0}-2aC_0$ is also pseudo-effective, where $C_0=\sigma(C)$. Note that $C_0$ is a moving curve on $S_0$. If $C_0^2=0$, then $(K_{S_0}\cdot C_0)=2p_g(C_0)-2\geq 4.$ If $C_0^2\geq 1$, then $(K_{S_0}\cdot C_0)\geq 2aC_0^2\geq 2a\geq 4$. In any case,  we have $(\sigma^*K_{S_0}\cdot C)=(K_{S_0}\cdot C_0)\geq  4$.
On the other hand, by Corollary \ref{KXKS} and the fact that $\pi^*K_X\geq S$, $2\pi^*K_X|_S-\sigma^*K_{S_0}$ is $\bQ$-effective. Since $C$ is a moving curve,
$$(\pi^*K_X|_S \cdot C)\geq\frac{1}{2}(\sigma^*K_{S_0}\cdot C) \geq 2.
$$
The proof is completed.
\end{proof}

Then we consider the case when $|K_X|$ gives a fibration of curves of genus $2$.
\begin{thm}\label{d2g2}
Let $X$ be a minimal projective $3$-fold of general type with $p_g(X)\geq 3$.
Suppose that $d_X=2$ and that $|K_X|$ gives a fibration of curves of genus $2$. Then
$K_X^3\ge \frac{4}{3}p_g(X)-\frac{10}{3}.$
\end{thm}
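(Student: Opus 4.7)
\emph{Plan.} The na\"ive restriction argument of Theorem \ref{d2g3} is insufficient here: in the genus $2$ case one only gets $(\sigma^{*}K_{S_{0}}\cdot C)=(K_{S}\cdot C)=2$, hence $(\pi^{*}K_{X}|_{S}\cdot C)\geq 1$, producing the weak bound $K_{X}^{3}\geq p_{g}(X)-2$. My plan is therefore to upgrade from the mixed intersection $\pi^{*}K_{X}|_{S}\cdot S|_{S}$ to the full self-intersection $(\pi^{*}K_{X}|_{S})^{2}$, compare the latter with $K_{S_{0}}^{2}$ by means of the extension-theorem consequence Corollary \ref{KXKS}, and then feed the resulting $K_{S_{0}}^{2}$ into the tailor-made inequality for surfaces with a genus $2$ fibration given by Proposition \ref{vol S>8/3}.

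Retain the notation of Subsection \ref{b setting} with $D=K_{X}$: let $S$ be a general member of $\Mov|\lfloor \pi^{*}K_{X}\rfloor|$, $\sigma:S\to S_{0}$ the contraction to the minimal model, and $C$ a general fiber of $f|_{S}$, which is a smooth curve of genus $2$ by hypothesis. Lemma \ref{S^2=aC} yields $S|_{S}\equiv aC$ with $a\geq p_{g}(X)-2$. Since $\pi^{*}K_{X}\geq M\geq S$, the $\bQ$-divisor $\pi^{*}K_{X}-S$ is $\bQ$-effective, and writing it as a non-negative combination of prime divisors $D_{i}$ we have $(\pi^{*}K_{X})^{2}\cdot D_{i}=(\pi^{*}K_{X}|_{D_{i}})^{2}\geq 0$ by nefness of $\pi^{*}K_{X}|_{D_{i}}$. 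Hence
\[
K_{X}^{3}=(\pi^{*}K_{X})^{2}\cdot S+(\pi^{*}K_{X})^{2}\cdot(\pi^{*}K_{X}-S)\geq (\pi^{*}K_{X}|_{S})^{2}.
\]

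Next I apply Corollary \ref{KXKS} with $D=K_{X}$ and $\lambda=1$ to conclude that $2\pi^{*}K_{X}|_{S}-\sigma^{*}K_{S_{0}}$ is $\bQ$-effective. Setting $L:=\pi^{*}K_{X}|_{S}$ and $K:=\sigma^{*}K_{S_{0}}$, both nef, the chain
\[
4L^{2}=(2L)\cdot(2L)\geq (2L)\cdot K\geq K\cdot K=K_{S_{0}}^{2}
\]
(first step: $2L$ nef and $2L-K$ $\bQ$-effective; second step: $K$ nef and $2L-K$ $\bQ$-effective) gives $(\pi^{*}K_{X}|_{S})^{2}\geq \tfrac{1}{4}K_{S_{0}}^{2}$. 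To bound $K_{S_{0}}^{2}$, observe that $K_{W}\geq \pi^{*}K_{X}\geq S$ implies $K_{S}=(K_{W}+S)|_{S}=2S|_{S}+(K_{W}-S)|_{S}$ on a general $S$, so numerically $K_{S}\equiv 2aC+G$ with $G:=(K_{W}-S)|_{S}\geq 0$ an effective integral divisor. Assuming the hypotheses of Proposition \ref{vol S>8/3} are in force (genus $2$ fibration $S\to T$ obtained from the Stein factorization of $f|_{S}$, together with $p_{g}(S)\geq 3$ and $S$ of general type), it yields
\[
K_{S_{0}}^{2}=\vol(S)\geq \tfrac{8}{3}(2a-1)\geq \tfrac{8}{3}(2p_{g}(X)-5),
\]
and chaining the three estimates delivers $K_{X}^{3}\geq \tfrac{1}{4}\cdot \tfrac{8}{3}(2p_{g}(X)-5)=\tfrac{4}{3}p_{g}(X)-\tfrac{10}{3}$.

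\emph{Main obstacle.} The delicate step is ensuring the input hypotheses of Proposition \ref{vol S>8/3}. Smoothness of $S$ and the genus $2$ fibration structure follow from Bertini and the setup of Subsection \ref{b setting}, but verifying that $S$ is of general type and $p_{g}(S)\geq 3$ is not automatic: over a higher-genus base curve one invokes Iitaka's $C_{2,1}$, while over $\bP^{1}$ one needs the bigness of $K_{X}$ together with $S|_{S}\equiv aC$ to force $K_{S}$ big, and $p_{g}(S)\geq 3$ must be obtained from the restriction of $|K_{W}+S|$ to $S$ using base-point-freeness of $|M|$. In the borderline regime $p_{g}(X)\in\{3,4\}$ one can simply bypass Proposition \ref{vol S>8/3} altogether by the weaker chain $K_{X}^{3}\geq (\pi^{*}K_{X}|_{S})^{2}\geq a\cdot(\pi^{*}K_{X}|_{S}\cdot C)\geq p_{g}(X)-2$, which already exceeds $\tfrac{4}{3}p_{g}(X)-\tfrac{10}{3}$ exactly when $p_{g}(X)\leq 4$, so a case split on $p_{g}(X)$ is expected to cover the full range $p_{g}(X)\geq 3$.
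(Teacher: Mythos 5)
Your proposal follows essentially the same route as the paper's proof: from $\pi^{*}K_{X}\geq S$ one invokes Corollary~\ref{KXKS} to get $2\pi^{*}K_{X}|_{S}-\sigma^{*}K_{S_{0}}$ $\bQ$-effective, writes $K_{S}\equiv 2aC+G$ and applies Proposition~\ref{vol S>8/3}, then chains $K_X^3\geq(\pi^*K_X|_S)^2\geq\tfrac14 K_{S_0}^2\geq\tfrac23(2a-1)$ with $a\geq p_g(X)-2$. The one hypothesis you flag as an obstacle but do not close, $p_g(S)\geq 3$, the paper settles in one line by adjunction, $p_g(S)\geq h^0(\pi^*K_X|_S)+1\geq p_g(X)\geq 3$, so the borderline case split on $p_g(X)\in\{3,4\}$ is unnecessary.
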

\begin{proof}
Keep the notation in Subsection \ref{b setting} with $D=K_X$.
Let $S\in \Mov|\rounddown{\pi^*(K_X)}|$ be a general member. By Lemma \ref{S^2=aC}, we have
$S|_{S}\equiv aC$ for an integer $a\ge p_g(X)-2$ and $C$ is a general fiber of the restricted fibration
$f|_{S}: S\to f(S)$ with $p_g(C)=2$.
 Denote by $S_0$ the minimal model of $S$ and $\sigma: S\to S_0$ the induced map.
Note that $K_S=(K_W+S)|_S\geq 2S|_S\equiv 2aC$, which implies that $K_S-2aC\equiv G$ where $G=K_S-2S|_S$ is an effective integral divisor. Note that $p_g(S)\geq h^0(K_X|_S)+1\geq p_g(X)\geq 3$ by adjunction. Hence by Proposition \ref{vol S>8/3},
$$\vol(S)=K_{S_0}^2\geq \frac{8}{3}(2a-1). $$ On the other hand, by Corollary \ref{KXKS} and the fact that $\pi^*K_X\geq S$, $2\pi^*K_X|_S-\sigma^*K_{S_0}$ is $\bQ$-effective. Since both $\pi^*K_X|_S$ and $\sigma^*K_{S_0}$ are nef divisors,
$$K_X^3\geq (\pi^*K_X|_S)^2\geq\frac{1}{4}(\sigma^*K_{S_0})^2 \geq \frac{2}{3}(2a-1)\ge \frac{4}{3}p_g(X)-\frac{10}{3}.
$$
The proof is completed.
\end{proof}

\begin{remark} In the first version of this paper, we used the so-called ``feasible resolution'' to prove Theorem \ref{d2g2}.  A keen observation due to Yong Hu, who suggests to make use of Theorem
\ref{kaw ext}, greatly helps us in forming the present proof of Theorem \ref{d2g2}.
\end{remark}

\subsection{Case $d_X=1$}\

In this subsection, we settle the case $d_X=1$. Firstly, we consider the case when $|K_X|$ is neither composed with a pencil of $(1,1)$-surfaces nor $(1,2)$-surfaces.

\begin{thm}\label{not 11 not 12} Let $X$ be a minimal projective $3$-fold of general type with $p_g(X)\geq 3$. Suppose that $d_X=1$ and $|K_X|$ is neither composed with a pencil of $(1,1)$-surfaces nor $(1,2)$-surfaces. Then $$K_X^3> 2p_g(X)-6.$$
\end{thm}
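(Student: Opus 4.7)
The plan is to leverage the pencil structure coming from $d_X = 1$ together with Corollary~\ref{KXKS}. Following the setup of Subsection~\ref{b setting} with $D = K_X$, I fix a resolution $\pi: W \to X$ and a fibration $f: W \to \Gamma$ onto a smooth curve, with base-point-free movable part $|M| = |f^*H|$ of $|\lfloor \pi^*K_X\rfloor|$, where $M = \sum_{i=1}^{a} F_i$ with $F_i$ smooth fibers of $f$ and $a \geq p_g(X) - 1 \geq 2$. A general fiber $F = F_1$ is smooth by Bertini, of general type by Iitaka's easy addition ($\kappa(F) \geq \kappa(W) - 1 = 2$), and semi-ample as a divisor on $W$ since $|kF| = f^*|kp|$ is base-point-free for $k$ sufficiently large. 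Writing $\pi^*K_X \sim_\bQ aF + Z'$ with $Z' \geq 0$ and observing that $(1/a)\pi^*K_X - F \sim_\bQ (1/a)Z'$ is $\bQ$-effective, I will apply Corollary~\ref{KXKS} with $D = K_X$, $S = F$, $\lambda = 1/a$ to obtain that
\[
\tfrac{a+1}{a}\pi^*K_X|_F - \sigma^*K_{F_0} \quad \text{is $\bQ$-effective on $F$},
\]
where $\sigma: F \to F_0$ is the contraction onto the minimal model.

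Next I convert this comparison into a numerical bound. Both $\pi^*K_X|_F$ and $\sigma^*K_{F_0}$ are nef on $F$, so the standard surface inequality $A^2 \geq AB \geq B^2$ (valid when $A, B$ are nef and $A - B$ is pseudo-effective) yields
\[
(\pi^*K_X|_F)^2 \geq \Bigl(\tfrac{a}{a+1}\Bigr)^2 K_{F_0}^2.
\]
Combined with $K_X^3 = (\pi^*K_X)^3 \geq a(\pi^*K_X)^2 \cdot F = a(\pi^*K_X|_F)^2$, which follows from $\pi^*K_X - aF = Z' \geq 0$ and nefness of $\pi^*K_X$, this produces the crucial inequality
\[
K_X^3 \geq \frac{a^3}{(a+1)^2}\, K_{F_0}^2.
\]

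The final step is to establish $K_{F_0}^2 \geq 2$ and conclude. By the Noether inequality for surfaces, a minimal surface of general type with $K^2 = 1$ satisfies $p_g \in \{0, 1, 2\}$. The hypothesis excludes $p_g(F_0) \in \{1, 2\}$ (the $(1,1)$ and $(1,2)$ cases), so only $p_g(F_0) = 0$ remains to rule out; this is immediate since the generic rank of $f_*\omega_W$ equals $p_g(F)$, so $p_g(F) = 0$ would force $p_g(X) = h^0(\Gamma, f_*\omega_W) = 0$, contradicting $p_g(X) \geq 3$. Therefore $K_{F_0}^2 \geq 2$. Since $a \geq p_g(X) - 1$ and $a^3/(a+1)^2$ is increasing in $a$, writing $p_g = p_g(X)$ gives
\[
K_X^3 \geq \frac{2(p_g - 1)^3}{p_g^2} = (2p_g - 6) + \frac{6p_g - 2}{p_g^2} > 2p_g - 6.
\]
The main obstacle is making the sharp choice $\lambda = 1/a$ go through in Corollary~\ref{KXKS}: one must verify that $F$ is semi-ample on $W$ and carefully track the comparison between $\pi^*K_X|_F$ and $\sigma^*K_{F_0}$, since the cruder choice $\lambda = 1$ would only yield $K_X^3 \geq \tfrac{a}{4}K_{F_0}^2$, which is insufficient to beat $2p_g - 6$ for $p_g \geq 4$.
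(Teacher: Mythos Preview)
Your proof is correct and follows essentially the same approach as the paper: set up the pencil via Subsection~\ref{b setting}, apply Corollary~\ref{KXKS} with $S=F$ and $\lambda=1/a$ to compare $\pi^*K_X|_F$ with $\sigma^*K_{F_0}$, and combine with $K_{F_0}^2\geq 2$ to obtain $K_X^3\geq \tfrac{2a^3}{(a+1)^2}>2p_g(X)-6$. The only difference is that the paper splits into the cases $p_g(\Gamma)=0$ and $p_g(\Gamma)>0$, invoking Lemma~\ref{lem no E=minimal}(3) in the latter to get the exact equality $\pi^*K_X|_F=\sigma^*K_{F_0}$, whereas you apply Corollary~\ref{KXKS} uniformly; since the hypotheses of that corollary are satisfied regardless of the genus of $\Gamma$, your uniform application is a legitimate (and slightly cleaner) route to the same estimate.
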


\begin{proof}
Keep the notation in Subsection \ref{b setting}. Take a resolution $\pi: W\to X$, we have a morphism $f: W\to \Gamma$ defined by $M=\Mov|\rounddown{\pi^*(K_X)}|$. Denote by $F$ the general fiber of $f$ which is a smooth surface of general type and $F_0$ the minimal model of $F$ with the induced map $\sigma:F\to F_0$.
Since $p_g(W)=p_g(X)>0$, $p_g(F)>0$ by adjunction. In this case, $K^2_{F_0}\geq 2$ by the Noether inequality since $F$ is neither a $(1,1)$-surface nor a $(1,2)$-surface.
Note that $\pi^*(K_X)\geq M\equiv aF$ for some integer $a\geq p_g(X)-1$.

If $p_g(\Gamma)=0$, then by Corollary \ref{KXKS} and the fact that $\pi^*(K_X)-aF$ is $\bQ$-effective, $$\Big(1+\frac{1}{a}\Big)\pi^*(K_X)|_F-\sigma^*(K_{F_0})$$ is $\bQ$-effective. If $p_g(\Gamma)>0$, then by Lemma \ref{lem no E=minimal}(3), $\pi^*(K_X)|_F= \sigma^*(K_{F_0})$.
Hence, in both situations,
we have
\begin{align*}
K_X^3{}&\geq a(\pi^*(K_X)|_F)^2 \geq \frac{a^3}{(a+1)^2}K_{F_0}^2\\
{}&\geq \frac{2a^3}{(a+1)^2}>2a-4\geq 2p_g(X)-6.
\end{align*}
The proof is completed.
\end{proof}

Then we consider the case when $|K_X|$ is composed with a pencil of $(1,1)$-surfaces.
\begin{thm}\label{1,1} Let $X$ be a minimal projective $3$-fold of general type with $p_g(X)\geq 3$. Suppose that $d_X=1$ and $|K_X|$ is composed with a pencil of $(1,1)$-surfaces. Then $$K_X^3> 2p_g(X)-6.$$
\end{thm}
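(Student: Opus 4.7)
I keep the notation of Subsection \ref{b setting} with $D=K_X$: take a good resolution $\pi\colon W\to X$, the induced morphism $f\colon W\to \Gamma$, a general fiber $F$, and $\sigma\colon F\to F_0$ the contraction onto the minimal model. By hypothesis $F_0$ is a $(1,1)$-surface, so $K_{F_0}^2=1$ and $p_g(F_0)=1$. We may write $\pi^*K_X\sim_{\bQ} aF+Z$ for some $\bQ$-effective divisor $Z$ and an integer $a\geq p_g(X)-1$.

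The first step is to apply Corollary \ref{KXKS} with $\lambda=1/a$, obtaining the basic comparison that $\frac{a+1}{a}\pi^*K_X|_F - \sigma^*K_{F_0}$ is $\bQ$-effective on $F$. Following the method of Theorem \ref{not 11 not 12}, this only yields
\[
K_X^3 \geq a(\pi^*K_X|_F)^2 \geq \frac{a^3}{(a+1)^2},
\]
which is \emph{insufficient} for the target bound $K_X^3>2p_g(X)-6\geq 2a-4$. The loss comes from the combined smallness of $K_{F_0}^2=1$ and the $a/(a+1)$ factor in the restriction comparison, so the naive argument of Theorem \ref{not 11 not 12} must be reinforced.

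To improve this, I would exploit that each fiber $F_0$, being a $(1,1)$-surface, carries a unique effective canonical curve $C_0\in |K_{F_0}|$ of arithmetic genus $2$. Varying $F$ along the pencil produces an auxiliary divisor on $W$: the short exact sequence $0\to\OO_W(K_W)\to\OO_W(K_W+F)\to\OO_F(K_F)\to 0$, combined with Kawamata--Viehweg vanishing on a suitable higher model, yields $h^0(K_W+F)\geq p_g(X)+1$, and each new section restricts on $F$ to the distinguished divisor $C_F:=\sigma^*C_0\in |K_F|$. Sweeping out over the pencil, after a further birational modification we obtain an effective divisor $T$ on $W$ with $T|_F\sim_{\bQ}\sigma^*K_{F_0}$ for general $F$, dominating $\Gamma$ under $f$. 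A second application of Kawamata's extension theorem \ref{kaw ext} to the pair $(W,T)$, in the spirit of Corollary \ref{KXKS}, then upgrades the basic comparison to one with the factor $a/(a+1)$ replaced by something of the form $1-o(1)$.

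The main obstacle will be making this construction of $T$ precise on a sufficiently nice resolution, controlling the fixed part of $|K_W+F|$ and the $\pi$-exceptional contributions, and extracting an effective numerical inequality $\pi^*K_X\geq aF+bT$ for an explicit $b>0$. Once this is in place, expanding $K_X^3=(\pi^*K_X)^2\cdot(aF+bT)$ and applying a two-step restriction --- first to $F$, and then to the arithmetic genus $2$ curve $T\cap F\sim \sigma^*C_0$ --- will yield the desired strict inequality $K_X^3>2p_g(X)-6$.
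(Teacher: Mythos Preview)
Your proposal correctly diagnoses why the argument of Theorem~\ref{not 11 not 12} breaks down when $K_{F_0}^2=1$, but what follows is a strategy outline, not a proof. The construction of the sweep $T$, the claim that Theorem~\ref{kaw ext} applies to the pair $(W,T)$, and the inequality $\pi^*K_X\geq aF+bT$ are all acknowledged by you as obstacles and none is resolved. Concretely: Theorem~\ref{kaw ext} requires the divisor $D$ to be smooth and semi-ample-like, which your $T$ has no reason to be; the restriction of the unique section of $K_W+F$ to $F$ gives the unique effective $K_F=\sigma^*C_0+(\text{$\sigma$-exceptional})$, not $\sigma^*K_{F_0}$ as you write; and you never specify $b$ nor extract any concrete numerical inequality from the ``two-step restriction'', so the conclusion $K_X^3>2p_g(X)-6$ is never reached. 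As written, the proposal contains no argument beyond the first displayed inequality, which you yourself note is insufficient.

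The paper does not attempt to sharpen the canonical restriction at all; it takes an entirely different route via the \emph{bicanonical} system. Using $p_g(F_0)=1$ and $q(F_0)=0$ one computes $\chi(\omega_X)\geq p_g(X)-1$, and Reid's plurigenus formula then gives $P_2(X)\geq\tfrac12 K_X^3+3p_g(X)-3$. One next shows that $|2K_W|$ separates general fibers of $f$ and analyzes the restricted sub-linear system $\Lambda_2\subseteq|2K_F|$, which has dimension $0$, $1$, or $2$. Each case yields an inequality of the shape $cK_X^3\geq c'P_2(X)-c''$ (using, respectively, Corollary~\ref{KXKS} applied with $2\pi^*K_X-a_2F$, Lemma~\ref{S^2=aC} on a general bicanonical surface, or the fact that $\varphi_{|2K_F|}$ is generically finite of degree~$4$ for a $(1,1)$-surface); substituting the plurigenus bound then closes the loop and gives the strict inequality. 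This bootstrap through $P_2(X)$ is the missing idea in your proposal.
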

\begin{proof}
Keep the notation in Subsection \ref{b setting}. Take a resolution $\pi: W\to X$, we have a morphism $f: W\to \Gamma$ defined by $M=\Mov|\rounddown{\pi^*(K_X)}|$. Denote by $F$ the general fiber of $f$ which is a $(1,1)$-surface of general type and $F_0$ the minimal model of $F$ with the induced map $\sigma:F\to F_0$. Note that $M\equiv aF$ for some integer $a\geq p_g(X)-1$.

Since $p_g(F)=1$, the sheaf $f_*\omega_{W}$ is a line bundle on $\Gamma$ of degree $a$, as $f_*\omega_{W}$ is torsion-free by \cite[Theorem 2.1]{Kol} and its rank is $p_g(F)$. Since $q(F)=0$ (see \cite[Theorem 11]{Bom73}), we get
\begin{align*}
{}&q(X)=h^2(W, \omega_{W})=p_g(\Gamma),\\
{}&h^2(\OO_{X})=h^1(W, \omega_{W})=h^1(\Gamma, f_*\omega_{W}).
\end{align*}

{}We claim that $\chi(\omega_{X})\geq p_g(X)-1$. 

First we consider the case $p_g(\Gamma)>1$. If $h^1(\Gamma, f_*\omega_{W})=0$, then $\chi(\omega_{X})=p_g(X)+p_g(\Gamma)-1$. If $h^1(\Gamma, f_*\omega_{W})>0$, then
 $$\chi(\omega_X)=\chi(f_*\omega_{W})+p_g(\Gamma)-1=\deg(f_*\omega_{W})\geq 2p_g(X)-2$$
by Clifford's theorem.

Next we consider the case $p_g(\Gamma)\leq 1$. Since $f_*\omega_{W}$ is a line bundle of degree $a>0$, we see $h^2(\OO_X)=0$. Thus we get
\begin{equation*} \chi(\omega_{X})=p_g(X)+q(X)-h^2(\OO_{X})-1\geq p_g(X)-1.\end{equation*}
In summary, $\chi(\omega_{X})\geq p_g(X)-1$ holds for all cases.

We claim that $|2K_W|$ distinguishes two general fibers $F_1$ and $F_2$ of $f$, that is, the images of $F_1$ and $F_2$ under the rational map defined by $|2K_W|$ are different. When $p_g(\Gamma)=0$, this is clear. Assume that $p_g(\Gamma)>0$,
recall that we can write $\pi^*(K_X)\equiv aF+E$ for an integer $a\geq p_g(X)\geq 3$ and an effective $\bQ$-divisor $E$. After replacing $W$ with a higher model, we may assume that $E$ has simple normal crossing support. Then by the Kawamata--Viehweg vanishing theorem,
$$
H^1(K_W+\roundup{{\pi^*(K_X)-tE-F_1-F_2}})=0
$$
where $t=\frac{2}{a}<1$ and $\pi^*(K_X)-tE-F_1-F_2\equiv (1-t)\pi^*(K_X)$ is nef and big.
Hence the natural map
$$
H^0(K_W+\roundup{{\pi^*(K_X)-tE}})\to H^0(F_1, D_1)\oplus H^0(F_2, D_2).
$$
is surjective, where $D_i=(K_W+\roundup{{\pi^*(K_X)-tE}})|_{F_i}$ is effective for $i=1,2$. This implies that $|K_W+\roundup{{\pi^*(K_X)-tE}}|$ distinguishes $F_1$ and $F_2$. By adding an effective divisor not containing $F_1$ and $F_2$, we know that $|2K_W|$ distinguishes $F_1$ and $F_2$.

By the plurigenus formula of Reid (\cite{YPG}), we have
$$P_2(X):=h^0(X, 2K_X)\ge \frac{1}{2}K_X^3-3\chi(\OO_{X})\geq
\frac{1}{2}K_X^3+3p_g(X)-3. $$
Set $|M_2|:=\Mov |2K_{W}|$. After replacing $\pi$ with a further birational modification, we may assume that $|M_2|$ is also base point free and defines a morphism $\varphi_2$ on $W$. Pick a general member $S_2\in |M_2|$. We consider the natural restriction map $\nu_2$:
$$H^0(W, S_2)\overset{\nu_2}\lra V_2\subseteq H^0(F, S_2|_F)\subseteq
H^0(F, 2K_F),$$
where $V_2$ denotes the image of $\nu_2$ as a ${\Bbb C}$-subspace of $H^0(F, S_2|_F)$. Since $h^0(2K_F)=3$, we see that
$1\le\dim_{\Bbb C}V_2\le 3$. Denote by $\Lambda_2$ the linear system on $F$ corresponding to $V_2$. We have $\dim\Lambda_2=\dim_{\Bbb C}(V_2)-1$.
\medskip

\noindent {\bf Case 1}. $\dim \Lambda_2=2$.

In this case, since $\Lambda_2$ is a sub-linear system of $|2K_F|$ of maximal dimension, we see that $\Lambda_2\supset \Mov |2K_F|$ and thus $\varphi_{2}|_F$ coincides with $\varphi_{2, F}$, the morphism defined by $\Mov |2K_F|$ on $F$. It is well-known that $\varphi_{2, F}$ is generically finite of degree $4$ since $F$ is a $(1,1)$-surface (see, for example, \cite{Xiao90}). Since $\varphi_2$ distinguishes general fibers of $f$, $\varphi_{2}$ is generically finite of degree $4$. 

Set $L_2:=S_2|_{S_2}$. We consider the natural map
$$H^0(W, S_2)\overset{\nu_2'}\lra {V'_2}\subseteq H^0(S_2, L_2),$$
where ${V'_2}$ is the image of $\nu_2'$ with $\dim_{\Bbb C}V'_2=h^0(W, S_2)-1=P_2(X)-1$. Denote by $\Lambda'_2$ the sub-linear system of $|L_2|$ corresponding to ${V'_2}$.
Then $\Lambda'_2$ defines a generically finite map of degree $4$ on $S_2$.
By \cite[Lemma 2.2(ii)]{MChen04JMSJ},
$$L_2^2\ge 4(\dim {\Lambda}'_2-1)\ge 4(P_2(X)-3).$$
Therefore we have
$$8K_X^3\ge S_2^3=L_2^2\ge4(P_2(X)-3)
\ge 4 \Big(\frac{1}{2}K_X^3+3p_g(X)-6\Big),$$
which implies that
$K_X^3\ge 2p_g(X)-4. $
\medskip

\noindent{\bf Case 2}. $\dim\Lambda_2=1$.

In this case, $\dim\varphi_{2}(F)=1$ and $\dim\varphi_{2}(W)=2$. Taking the Stein factorization of $\varphi_{2}$, we get an induced fibration $f_2:W\to \Sigma$ where $\Sigma$ is a normal projective surface. Let $C'$ be a general fiber of $f_2$. We can also view $C'$ as a general fiber of $f_2|_F: F\to f_2(F)$. 
Since $F$ is a $(1,1)$-surface and $C'$ is a moving curve which comes from a fibration of $F$,
by \cite[Lemma 2.4]{exp3}, we have $(\sigma^*(K_{F_0})\cdot C')\ge 2$.

If $p_g(\Gamma)=0$, then by Corollary \ref{KXKS} and the fact that $\pi^*(K_X)-aF$ is $\bQ$-effective, $$\Big(1+\frac{1}{a}\Big)\pi^*(K_X)|_F-\sigma^*(K_{F_0})$$ is $\bQ$-effective. If $p_g(\Gamma)>0$, then by Lemma \ref{lem no E=minimal}(3), $\pi^*(K_X)|_F= \sigma^*(K_{F_0})$.

Hence we always have
$$(\pi^*(K_X)\cdot C')=(\pi^*(K_X)|_F\cdot C')\ge\frac{a}{a+1}(\sigma^*(K_{F_0})\cdot C')\ge \frac{2a}{a+1},$$
where $a\geq p_g(X)-1$.

On the general surface $S_2$, by Lemma \ref{S^2=aC}, we may write
$S_2|_{S_2}\equiv a_2C'$
for an integer $a_2\ge P_2(X)-2$. Noting that
$$(\pi^*(K_X)|_{S_2}\cdot C')=(\pi^*(K_X)\cdot C')\ge\frac{2a}{a+1}$$
and $2\pi^*(K_X)\ge S_2$, we have
\begin{align*}
4K_X^3{}&\ge (\pi^*(K_X)\cdot S_2\cdot S_2)
 = a_2(\pi^*(K_X)|_{S_2}\cdot C')\\
{}&\ge\frac{2a}{a+1}(P_2(X)-2)
\ge \frac{2a}{a+1}\Big(\frac{1}{2}K_X^3+3p_g(X)-5\Big).
\end{align*}
Thus it follows that
\begin{align*}
K_X^3&{}\ge\frac{6a}{3a+4}p_g(X)-\frac{10a}{3a+4}\\
{}&=2p_g(X)-6+\frac{8a-8p_g(X)+24}{3a+4}\\
{}&> 2p_g(X)-6. 
\end{align*}
\medskip

\noindent{\bf Case 3}. $\dim \Lambda_2=0$.

In this case, $\varphi_{2}$ is trivial on $F$, which means that $\varphi_{2}$ and $\varphi_{1}$ induce the same fibration $f:W\to \Gamma$ after taking the Stein factorizations. So we may write
$$
2\pi^*(K_X)\sim \sum_{i=1}^{a_2}F_i+E_2'
\equiv a_2F+E_2',$$
where the surfaces $F_i's$ are smooth fibers of $f$, $E_2'$ is an effective ${\Bbb Q}$-divisor, and $a_2\ge P_2(X)-1$. 

If $p_g(\Gamma)=0$, then by Corollary \ref{KXKS} and the fact that $2\pi^*(K_X)-a_2F$ is $\bQ$-effective, $$\Big(1+\frac{2}{a_2}\Big)\pi^*(K_X)|_F-\sigma^*(K_{F_0})$$ is $\bQ$-effective. If $p_g(\Gamma)>0$, then by Lemma \ref{lem no E=minimal}(3), $\pi^*(K_X)|_F= \sigma^*(K_{F_0})$.

Hence
 we have
\begin{align*}
2K_X^3{}&=(\pi^*(K_X)^2\cdot 2\pi^*(K_X))\geq (\pi^*(K_X)^2\cdot a_2F)\\
{}&= a_2(\pi^*(K_X)|_F)^2\ge\frac{a_2^3}{(a_2+2)^2}K_{F_0}^2\\
{}&= a_2-4+\frac{12a_2+16}{(a_2+2)^2}\\
{}&>P_2(X)-5\\
{}&\geq \frac{1}{2}K_X^3+3p_g(X)-8,
\end{align*}
which gives
$$K_X^3>2p_g(X)-\frac{16}{3}. $$

Combining all above cases, the inequality is proved.
\end{proof}

Here we remark that in the above proof, we further consider the map $|2K_X|$. The main reason this method works for this case is that, after restricting to the fiber $F$, $|2K_F|$ gives a generically finite map of degree $4$ (which is the key point in solving Case (1)). However, this method fails for pencils of $(1,2)$-surfaces, because in this case $|2K_F|$ gives a generically finite map of degree $2$, which is too small for our desired inequality.





Finally we consider the case when $|K_X|$ is composed with a pencil of $(1,2)$-surfaces and $\Mov|K_X|$ is free. Recall the following lemma, which is a generalization of \cite[Lemma 4.6]{MChen04JMSJ} with a simplified proof.

\begin{lem}[{cf. \cite[Lemma 4.6]{MChen04JMSJ}}]\label{2d}Let $V$ be a projective $3$-fold with at worst terminal singularities. Suppose that $p_g(V)\geq 2$, and there is a fibration $\phi: V\to T$ to a smooth curve $T$ whose general fiber $F$ is a smooth surface with $q(F)=0$ (e.g., $F$ is a $(1,2)$-surface). Fix a general fiber $F^{(0)}$, then for a general fiber $F$, the natural restriction map $
H^0(V, K_V+2F^{(0)})\to H^0(F, K_{F})$
is surjective and therefore $\Phi_{|K_V+2F^{(0)}|}|_{F}=\Phi_{|K_{F}|}$.
\end{lem}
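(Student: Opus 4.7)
The plan is to reduce the surjectivity to a cohomology vanishing on the base curve $T$, which will follow from a slope estimate using the semipositivity of $\phi_*\omega_{V/T}$. Since terminal singularities are rational, Grauert--Riemenschneider plus projection formula let me replace $V$ by a smooth resolution without changing either side of the desired restriction map, so I will assume $V$ is smooth from the outset. Because $F^{(0)}$ and $F$ are disjoint general fibers with $F|_F = 0$, adjunction yields $\omega_V(2F^{(0)})|_F = \omega_F$, and the standard short exact sequence
$$0 \to \omega_V(2F^{(0)} - F) \to \omega_V(2F^{(0)}) \to \omega_F \to 0$$
reduces the desired surjectivity to the vanishing $H^1(V, \omega_V(2F^{(0)} - F)) = 0$.

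To analyze this $H^1$, the hypothesis $q(F) = 0$ is essential. Serre duality on the surface $F$ gives $H^1(F, \omega_F) \cong H^1(F, \mathcal{O}_F)^\vee = 0$, so $R^1\phi_*\omega_V$ has trivial general fiber; Koll\'ar's torsion-freeness theorem then forces $R^1\phi_*\omega_V = 0$ on all of $T$. Writing $2F^{(0)} - F = \phi^*(2[t_0] - [t])$, the projection formula gives $R^1\phi_*(\omega_V(2F^{(0)} - F)) = 0$ as well, and the Leray spectral sequence collapses to
$$H^1(V, \omega_V(2F^{(0)} - F)) = H^1\bigl(T,\, \phi_*\omega_V \otimes \mathcal{O}_T(2[t_0] - [t])\bigr).$$

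To kill this last $H^1$ I set $\mathcal{E} := \phi_*\omega_{V/T}$, which is nef by Fujita--Kawamata--Viehweg, so $\mu_{\min}(\mathcal{E}) \geq 0$. Since $\phi_*\omega_V = \mathcal{E} \otimes \omega_T$,
$$\mu_{\min}\bigl(\phi_*\omega_V \otimes \mathcal{O}_T(2[t_0] - [t])\bigr) \geq (2g(T) - 2) + 1 = 2g(T) - 1 > 2g(T) - 2,$$
which by Serre duality on $T$ (any sheaf of negative maximal slope has no nonzero sections) forces the $H^1$ to vanish, yielding the desired surjectivity and hence $\Phi_{|K_V + 2F^{(0)}|}|_F = \Phi_{|K_F|}$. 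The main obstacle I anticipate is the bookkeeping around the singularities of $V$, namely identifying sections of the reflexive sheaf $\omega_V(2F^{(0)})$ with those on the resolution and checking that restriction to the general fiber is unaffected; but once one is on a smooth model, the three ingredients (Koll\'ar torsion-freeness, Grauert--Riemenschneider, Fujita--Kawamata--Viehweg semi-positivity) each apply directly. The twist by $2F^{(0)}$ is precisely calibrated: it pushes the slope strictly past $2g(T) - 2$ regardless of $g(T)$, which is exactly what makes the argument uniform.
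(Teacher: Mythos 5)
Your proof is correct, and up through the reduction to $H^1\bigl(T,\,\phi_*\omega_V\otimes\OO_T(2t_0-t)\bigr)=0$ it matches the paper's proof exactly: both pass to a smooth model, use $q(F)=0$ together with Koll\'ar torsion-freeness to conclude $R^1\phi_*\omega_V=0$, and project via Leray to the base curve. Where you diverge is the final vanishing: the paper simply observes that $\OO_T(2t_0-t)$ has degree one, hence is ample, and invokes Koll\'ar's vanishing theorem (\cite[Theorem 2.1]{Kol}) to kill the $H^1$ in one stroke. You instead appeal to the Fujita--Kawamata--Viehweg semipositivity of $\phi_*\omega_{V/T}$ on the curve $T$, note $\mu_{\min}\geq 0$, twist by $\omega_T\otimes\OO_T(2t_0-t)$ to push $\mu_{\min}$ strictly past $2g(T)-2$, and conclude by Serre duality and the Harder--Narasimhan filtration. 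Both final steps are standard; Koll\'ar's vanishing is the more economical choice here because it is exactly tailored to the sheaf $\phi_*\omega_V$ twisted by an ample bundle, whereas your route imports a separate positivity theorem and a slope computation to reprove a special case of it. Your approach does have the minor pedagogical advantage of making visible why the coefficient ``$2$'' in $2F^{(0)}$ is the right one — it is the minimum needed to push the slope past $2g(T)-2$ uniformly in $g(T)$ — which the Koll\'ar-vanishing route renders automatic but opaque.
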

\begin{proof}
Note that, since the nature of the statement is invariant under birational equivalence,
we may assume $V$ to be smooth. Since $q(F)=0$, we have $R^1\phi_*\omega_V=0$. Then, by Koll\'ar's vanishing \cite[Theorem 2.1]{Kol},
$$
h^1(V, K_V+2F^{(0)}-F )=h^1(T, \phi_*(\omega_V)\otimes \OO_T(2t_0-t ))=0,
$$
where $t_0=\phi(F^{(0)})\in T$ and $t =\phi(F )\in T$. Hence the natural restriction map
$$
H^0(V, K_V+2F^{(0)})\to H^0(F, K_{F })
$$
is surjective.
\end{proof}

\begin{thm}\label{last} Let $X$ be a minimal projective $3$-fold of general type with $p_g(X)\geq 4$. Assume that $d_X=1$ and $|K_X|$ is composed with a pencil of $(1,2)$-surfaces. Moreover, assume that $\Mov|K_X|$ is base point free. Then $$K_X^3\geq \frac{4}{3}p_g(X)-{\frac{10}{3}}.$$
\end{thm}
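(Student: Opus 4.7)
The plan is a two-level argument exploiting both the outer fibration $f : W \to \Gamma$ whose general fiber is a $(1,2)$-surface, and the inner genus-$2$ pencil that lives canonically in $|K_F|$ of every such fiber. Keep the notation of Subsection \ref{b setting} with $D = K_X$: take $\pi : W \to X$ so that $f : W \to \Gamma$ is induced by $\Mov|\pi^*K_X|$, and let $F$ be a general fiber, a smooth $(1,2)$-surface whose minimal model $F_0$ satisfies $K_{F_0}^2 = 1$ and $p_g(F_0) = 2$. Because $\Mov|K_X|$ is assumed base point free, we may arrange that $\Mov|\pi^*K_X| = |aF|$ is base point free on $W$ and that $\pi^*K_X \sim_\bQ aF + Z$ for some effective $\bQ$-divisor $Z$ and an integer $a \geq p_g(X) - 1$.

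First I would lift the canonical genus-$2$ pencil on $F$ to a global structure on $W$. For a $(1,2)$-surface, $|K_{F_0}|$ is composed with a pencil of curves of arithmetic genus $2$, and since $q(F) = 0$, Lemma \ref{2d} yields the surjection $H^0(W, K_W + 2F^{(0)}) \to H^0(F, K_F)$. After a further birational modification one may therefore assume $\Mov|K_W + 2F^{(0)}|$ is base point free and, taking its Stein factorization, obtain a morphism $h : W \to T$ onto a normal surface such that the general fiber of $h$ is a smooth genus-$2$ curve $C$ and $h$ factors through $f$.

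Second, take $S$ to be a general member of $\Mov|K_W + 2F^{(0)}|$: by Bertini $S$ is a smooth surface of general type on which $h|_S$ is a genus-$2$ fibration. Lemma \ref{S^2=aC} gives $S|_S \equiv a'C$ with $a' \geq h^0(W, K_W + 2F^{(0)}) - 2$, and using the exact sequences attached to the filtration $0 \subset F^{(0)} \subset 2F^{(0)}$ together with Lemma \ref{2d}, one checks that $a'$ grows linearly with $p_g(X)$. Adjunction $K_S = (K_W + S)|_S$ combined with $K_W + S \leq 2K_W + 2F^{(0)}$ yields $K_S \equiv n C + G$ for some effective $G$ and some integer $n$ of order $p_g(X)$. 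After verifying $p_g(S) \geq 3$ from the long exact sequence (using $p_g(X) \geq 4$), Proposition \ref{vol S>8/3} supplies $K_{S_0}^2 \geq \tfrac{8}{3}(n - 1)$.

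Finally, compare back to $X$. From $aF \leq \pi^*K_X$ we have $K_W + 2F^{(0)} \leq (1 + \tfrac{2}{a})\pi^*K_X$ modulo exceptional divisors, so $\lambda \pi^*K_X - S$ is $\bQ$-effective for some $\lambda$ close to $2$; Corollary \ref{KXKS} then gives that $(1+\lambda)\pi^*K_X|_S - \sigma^*K_{S_0}$ is $\bQ$-effective. Combining $K_X^3 \geq (\pi^*K_X)^2 \cdot S \geq c_1 (\pi^*K_X|_S)^2 \geq c_2 K_{S_0}^2$ and tracking the coefficients through $n = O(p_g(X))$ leads to $K_X^3 \geq \tfrac{4}{3}p_g(X) - \tfrac{10}{3}$. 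The main obstacle is Step 1: one must ensure the lifted system $|K_W + 2F^{(0)}|$ is not composed with the same pencil as $f$, so that $h$ genuinely has $2$-dimensional image; the introduction signals that in the exceptional cases where this or a similar nondegeneracy fails a direct estimate gives only a weaker inequality, and one must upgrade to the sharp constant $\tfrac{4}{3}$ via weak positivity of direct image sheaves (Theorem \ref{thm wp}) applied to $f_*\omega_{W/\Gamma}^{\otimes m}$.
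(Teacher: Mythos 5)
Your Step 0 and Step 1 match the paper's: set $D=K_X+2F^{(0)}$, use Lemma \ref{2d} to see that $|D|$ gives a fibration of genus-$2$ curves, take $S\in\Mov|\rounddown{\pi^*D}|$, apply Lemma \ref{S^2=aC} and Proposition \ref{vol S>8/3} to $S$, and transfer the estimate to $K_X^3$ via Corollary \ref{KXKS}. Carried out carefully (the paper computes $4K_X^3+16\geq(\sigma^*K_{S_0})^2\geq\tfrac{8}{3}(2a-3)$ with $a\geq h^0(D)-2\geq p_g(X)+1$), this yields only $K_X^3\geq\tfrac43 a-6$, hence $K_X^3\geq\tfrac43 p_g(X)-\tfrac{14}{3}$, which is $\tfrac43$ short of the stated bound. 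So the argument you outline, even if every coefficient is tracked optimally, does not by itself reach $-\tfrac{10}{3}$.

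You correctly sense that the boundary case requires a weak-positivity input, but you misidentify both the obstacle and where the weak positivity is applied. The nondegeneracy you worry about (that $|K_W+2F^{(0)}|$ might be composed with the same pencil as $f$) is automatic from Lemma \ref{2d}, since $\Phi_{|D|}|_F=\Phi_{|K_F|}$ is a nontrivial pencil of genus-$2$ curves on a $(1,2)$-surface; the image of $\Phi_{|D|}$ is always $2$-dimensional. The genuine obstacle is the extremal case $a=h^0(D)-2=p_g(X)+1$, which forces $p_g(\Gamma)=0$ and forces the image $Z\subseteq\bP^{h^0(D)-1}$ to be a surface of minimal degree. The paper then shows $Z\cong\bP(1,1,a)$, constructs an explicit resolution so that $\gamma$ factors through $\bF_a$ via a morphism $g:W\to\bF_a$, isolates two distinguished prime divisors $E_0$ (the strict transform of the exceptional divisor over the moving base point of $|K_{F_X}|$) and $D_0$ (the unique component of $g^*\sigma_0$ meeting $E_0$), and then applies Theorem \ref{thm wp} to $g'_*\OO_{W'}(mK_{W'/\Sigma'}+mE'_0)$ over a resolution $\Sigma'\to\bF_a$ — not to $f_*\omega_{W/\Gamma}^{\otimes m}$ over the curve $\Gamma$ as you propose. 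This produces the key inequality $((3\pi^*K_X-(a-6)F)\cdot D_0\cdot\pi^*K_X)\geq 0$, from which $(\pi^*K_X^2\cdot B)\geq\tfrac{a}{3}-2$ and finally $K_X^3\geq\tfrac43 a-4=\tfrac43 p_g(X)-\tfrac83$ in the extremal case. Without this Step 2 machinery — the Hirzebruch-surface factorization, the analysis of $E_0$ and $D_0$, and the relative weak positivity over $\bF_a$ — the argument stalls at $-\tfrac{14}{3}$.
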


\begin{proof}
 {\bf Step 0. }Overall settings.

By the assumption, $\Mov|K_X|$ induces a fibration $f:X\to \Gamma$. Denote by $F_X$ the general fiber of $f$, which is a minimal $(1,2)$-surface since $X$ is minimal.
Fix a general fiber $F_{X, 0}$ of $f$ and consider the linear system $|D|=|K_X+2F_{X,0}|$. By Lemma \ref{2d}, $\Phi_{|D|}|_{F_X}=\Phi_{|K_{F_X}|}$ which defines a genus $2$ fibration $\tilde{F}_X\to \bP^1$, where $\tilde{F}_X$ is any higher model of $F_X$ resolving the base point of $|K_{F_X}|$. This means that $\dim\Phi_{|D|}(X)=2$, and $|D|$ gives a fibration of curves of genus $2$.

Take a resolution $\pi: W\rightarrow X$ such
that $|M|=\Mov|\rounddown{\pi^*D}|$ is base point free.
Let $W\overset{\beta}\longrightarrow \Sigma\overset{s}\longrightarrow Z$
be the Stein factorization of $\gamma=\pi\circ \Phi_{|D|}$ with $Z=\gamma(W)\subseteq
\bP^{h^0(D)-1}$. We have the following commutative
diagram:
$$\xymatrix@=4.5em{
&W\ar[ld]_{f'}\ar[d]_\pi \ar[dr]^{\gamma} \ar[r]^\beta& \Sigma\ar[d]^s\\
\Gamma &X \ar[l]_f \ar@{-->}[r]^{\Phi_{|D|}} & Z}
$$

Let $S\in \Mov|\rounddown{\pi^*(D)}|$ be a general member. By Lemma \ref{S^2=aC}, we have
$S|_{S}\equiv aC$ for an integer $a\ge h^0(D)-2$ and $C$ is a general fiber of the restricted fibration
$\beta|_{S}: S\to \beta(S)$ with $p_g(C)=2$.
Note that $C$ can be viewed as a general fiber of $\beta$.
 Denote by $S_0$ the minimal model of $S$ and $\sigma: S\to S_0$ the induced map. Denote by $F$ the general fiber of $\pi\circ f$ and $F^{(0)}$ the fiber corresponding to $F_{X, 0}$.
 Note that by the construction, $|S|$ gives the canonical pencil $F\to \bP^1$ on $F$, hence $(S|_F)^2=0$. This means that $(F|_S\cdot S|_S)=0$. Hence $F|_S=C$ is a general fiber of $\beta|_S$.
\medskip

\noindent {\bf Step 1.} A general inequality.

Since $K_W+2F^{(0)}\geq S$, we have $K_S=(K_W+S)|_S\geq (2S-2F^{(0)})|_S\equiv (2a-2)C$, which implies that $K_S-(2a-2)C\equiv G$ where $G$ is an effective integral divisor. Note that
$$p_g(S)> h^0(S, K_W|_S)\geq h^0(X, K_X)-h^0(X, K_X-S)= p_g(X)\geq 4.$$ Hence by Proposition \ref{vol S>8/3},
$$\vol(S)=K_{S_0}^2\geq \frac{8}{3}(2a-3). $$ On the other hand, by Corollary \ref{KXKS} and the fact that $\pi^*(D)\geq S$, $\pi^*(K_X+D)|_S-\sigma^*K_{S_0}$ is $\bQ$-effective. Note that both $\pi^*(K_X+D)|_S$ and $\sigma^*K_{S_0}$ are nef divisors. Hence
\begin{align*}
4K_X^3+16{}&= (\pi^*(K_X+D)^2\cdot \pi^*(D))\geq (\pi^*(K_X+D)^2\cdot S)\\
{}&\geq (\sigma^*K_{S_0})^2\geq \frac{8}{3}(2a-3).
\end{align*}
This means that
\begin{align}
K_X^3\geq \frac{4}{3}a-6.\label{last ineq}
\end{align}

Now we estimate the value of $a$.
By \cite[Lemma 4.5]{MChen04JMSJ}, $0\leq p_g(\Gamma)\leq 1$ and $h^2(\OO_W)=h^2(\OO_X)\leq 1-p_g(\Gamma)$.

If $p_g(\Gamma)=1$, then $h^1(K_X)=h^2(\OO_X)=0$. Also by the proof of Lemma \ref{2d}, $h^1(K_X+F_{X,0})=h^1(K_W+F^{(0)})=0$. Arguing by exact sequences, it is easy to see that
$$
h^0(D)=h^0(K_X+2F_{X,0})=h^0(K_X)+2h^0(K_{F_{X,0}})=p_g(X)+4.
$$
Here the first and the third equalities follow by definition, and the second follows from the exact sequences
 $$
 0\to H^0(X, K_X+F_{X,0})\to H^0(X, K_X+2F_{X,0})\to H^0(F_{X,0}, K_{F_{X,0}})\to 0
 $$
 and
 $$
 0\to H^0(X, K_X)\to H^0(X, K_X+F_{X,0})\to H^0(F_{X,0}, K_{F_{X,0}})\to 0.
 $$

If $p_g(\Gamma)=0$, then $|K_X|$ is composed with a rational pencil and $K_X\geq (p_g(X)-1)F_{X,0}$. On the other hand, $|D|=|K_X+2F_{X,0}|$ is not composed with a pencil by the construction, hence
$$
h^0(D)=h^0(K_X+2F_{X,0})\geq h^0((p_g(X)+1)F_{X,0})+1=p_g(X)+3.
$$

Hence, in both situations, we have
$$
a\ge h^0(D)-2\geq p_g(X)+1.
$$
Note that $a$ is an integer by the construction. If $a\geq p_g(X)+2$, we can get the desired inequality by Inequality \eqref{last ineq}.
\medskip

\noindent {\bf Step 2.} The case with $a=p_g(X)+1$.

{}From now on we assume that $a=p_g(X)+1$. In this case, by Inequality \eqref{last ineq} we can only get a weaker inequality
\begin{align*}
K_X^3\geq \frac{4}{3}p_g(X)-\frac{14}{3}.
\end{align*}
So we have to study the structure of $X$ in more details to get a better inequality. Since the equality
$$
a= h^0(D)-2= p_g(X)+1
$$
holds, we see  $p_g(\Gamma)=0$. The first equality implies that $\Sigma\simeq Z\subseteq \bP^{h^0(D)-1}$ is a surface of minimal degree by Lemma \ref{S^2=aC}. Note that $h^0(D)\geq p_g(X)+3\geq 7$. By \cite[Exercise IV.18.4]{Beauville}, $Z$ is either $\bP(1,1,a)$ (i.e., the cone over a rational curve of degree $a$), or the $r$-th Hirzebruch surface $\bF_r$ for some $r\geq 0$.
In particular, $\Sigma\simeq Z$ is normal.
\smallskip

\noindent {\bf Sub-step 2.1.} We claim that  $Z=\bP(1,1,a)$.

Assume, to the contrary, that $Z=\bF_r$ for some $r$, then by \cite[Exercise IV.18.4]{Beauville}, the embedding $Z\subseteq \bP^{h^0(D)-1}$ is given by $|\sigma_0+(r+k)\ell|$, where $r+2k=h^0(D)-2$, $k\geq 1$, $\sigma_0$ is the section with self-intersection $-r$, and $\ell$ is a ruling. Hence $S\sim\gamma^*(\sigma_0+(r+k)\ell)$. Note that $F$ and $\gamma^*(\ell)$ both give rational pencils on $W$. If they give different pencils, then $\gamma^*(\ell)|_F$ is a moving curve on $F$. But then
\begin{eqnarray}
1&=&(\pi^*(K_X)\cdot \pi^*(K_X+2F_X)\cdot F)\notag \\
&\geq& (\pi^*(K_X)|_F\cdot S|_F)
\geq (\pi^*(K_X)|_F\cdot (k+r)\gamma^*(\ell)|_F)\notag\\
&=&(\pi|_F^*(K_{F_X})\cdot (k+r)\gamma^*(\ell)|_F)\geq k+r.\label{l=F}
\end{eqnarray}
This is absurd since $k+r\geq \frac{1}{2}(h^0(D)-2)>1$ by the assumption. Hence $F$ and $\gamma^*(\ell)$ give the same rational pencil and $F\sim\gamma^*(\ell)$ since they are irreducible. But by the construction, $S\geq (p_g(X)+1)F$, which implies that $|\sigma_0+(r+k-p_g(X)-1)\ell|\neq \emptyset.$ This is also absurd since
$r+k<r+2k=h^0(D)-2=p_g(X)-1.$
\smallskip

\noindent {\bf Sub-step 2.2.} Construct a special model $W$.

To proceed further discussion, we describe an explicit way to construct a special resolution $W$ resolving the base locus of $\Mov|{D}|$.

Firstly we fix some notation. Since $\Gamma=\bP^1$, we do not need to fix $F_{X,0}$ and we will just write $F_X$ instead of $F_{X,0}$. Consider a general fiber $F_X$ of $f$, which is a minimal $(1,2)$-surface. It is known that $|K_{F_X}|$ has a unique base point $P$ (see \cite[Section 2]{Hori76}). Denote by $\eta: F'_X\to F_X$ the blow-up at $P$ of $|K_{F_X}|$, and $\mathfrak{e}_0$ the exceptional divisor, then $\Mov|K_{F'_X}|=|\eta^*(K_{F_X})-\mathfrak{e}_0|$ is base point free (see \cite[Section 2, p. 129]{Hori76}).

In the following, we will  construct a special resolution $W$ resolving the base locus of $\Mov|{D}|$ (as in the setting of Step 0), with a nice property that  a general fiber of $f':W\to \bP^1$ is isomorphic to $F'_X$. This is possible since, roughly speaking, after blowing up once along the section of $X\to \bP^1$ coming from the unique base point of $|K_{F_X}|$ for general fiber $F_X$, the base locus of the strict transform of $\Mov|{D}|$ does not dominant $\bP^1$, and hence we can further resolve it avoid changing the general fiber. We explain this construction in details as the following.

Note that by Lemma \ref{2d}, $P=\Bs|K_{F_X}|= (\Bs(\Mov|K_X+2F_X|))|_{F_X}$. Therefore, there is a curve $\mathfrak{B}\subseteq \Bs( \Mov|K_X+2F_X|)$ such that $\mathfrak{B}|_{F_X}=P$ for a general fiber $F_X$.
In the following, for $i=1,2$, $F_i$ denotes the general fiber of the composition $X_i\to X\to \bP^1$.

Take a resolution $X_1\to X$ which resolves (isolated) singularities of $X$ and singularities of $\mathfrak{B}$. Note that this resolution does not affect the general fiber, i.e., $F_1\cong F_X$. Denote by $\mathfrak{B}_1$ the strict transform of $\mathfrak{B}$ on $X_1$, which is a smooth curve. Then take $X_2\to X_1$ to be the blow-up along $\mathfrak{B}_1$. Then $F_2\cong F'_X$ and, by Lemma \ref{2d},
$\Mov|K_{X_2}+2F_2||_{F_2}= \Mov|K_{F_2}|$ which is base point free and defines a rational pencil.
Hence the base locus of $\Mov|K_{X_2}+2F_2|$ does not intersect with a general fiber $F_2$. Take a resolution $W\to X_2$ to resolve the base locus of $\Mov|K_{X_2}+2F_2|$. Then
$$|S|=\Mov|\pi^*(K_X+2F_X)|=\Mov|K_{W}+2F|$$
 is base point free and $F\cong F'_X$, where $\pi:W\to X$ is the induced map and $F$ is the general fiber of $f'=f\circ \pi:W\to \bP^1$.
Hence $|S|$ defines a morphism $\gamma:W \to Z=\bP(1,1,a)$.  The advantage of such an  explicit construction of $W$ is that the general fiber of $f'$ is isomorphic to $F'_X$, which is easy to describe and has a clear structure.
\smallskip

\noindent {\bf Sub-step 2.3.} We claim that $\gamma:W\to Z$ factors through $\bF_a$.

 In fact, take a resolution $\alpha: Y\to W$ such that $Y\to Z$ factors through $\bF_a$ and we have the following commutative
diagram:

$$\xymatrix@=4em{
Y\ar[d]_\alpha \ar[r]^{h}& \bF_a\ar[d]_\psi\\
 W\ar[r]^{\gamma} & \bP(1,1,a)}
$$
By abuse of notation, without any confusion, we still denote by $\sigma_0$ the negative section of $\bF_a$ and $\ell$ the ruling (which were previously used for  $\bF_r$). Denote by $\bar{\ell}$ the ruling of $\bP(1,1,a)$ and we have $S\sim \gamma^*(a \bar{\ell})$. Note that $\alpha^*F$ and $h^*\ell$ both define rational pencils on $Y$, arguing as Inequality \eqref{l=F}, it is easy to see that
$\alpha^*F\sim h^*\ell$. Note that $h$ can be defined by the linear system $|h^*(\sigma_0+(a+1)\ell)|$ as $\sigma_0+(a+1)\ell$ is very ample on $\bF_a$. On the other hand,
$$h^*(\sigma_0+(a+1)\ell)\sim h^*\psi^*(a \bar{\ell})+h^*\ell\sim \alpha^*(S+F).$$
This means that $h=\Phi_{|h^*(\sigma_0+(a+1)\ell)|}$ factors through $\Phi_{|S+F|}$. Since $h(Y)\cong \bF_a$ and $|S+F|$ is base point free on $W$,
$\Phi_{|S+F|}$ defines a morphism $g:W\to \bF_a$ onto $\bF_a$, which proves the claim.

Also we know that $F\sim g^*(\ell)$. In particular, we have the following commutative
diagram:
$$\xymatrix@=4em{
W\ar[d]_\pi\ar[dr]^{f'} \ar[r]^g& \bF_a\ar[d]\\
X \ar[r]^f & \bP^1}
$$

Denote $g^*(\sigma_0)=B$ which is an effective Cartier divisor on $W$.
We can write
 $$\pi^*K_X+2F\sim S+E''\sim aF+B+E''$$
 for an effective $\bQ$-divisor $E''$ and
 $$K_{W}=\pi^*K_X+E_\pi$$
 for an effective $\pi$-exceptional $\bQ$-divisor $E_\pi$.
 \smallskip

\noindent {\bf Sub-step 2.4.} Two distinguished components in $B$ and $E''$.
 \begin{claim}\label{E0D0} The following statements hold:
 \begin{enumerate}
 \item there exists a unique  $\pi$-exceptional prime divisor $E_0$ on $W$ such that $E_0$ dominates $\bF_a$. Moreover, 
 $(E_0\cdot C)=1$ and $\coeff_{E_0}E''=\coeff_{E_0}E_\pi=1$, where $C$ is a general fiber of $g$;
\item there exists a unique prime divisor $D_0$ which is a component of $B$ such that $(D_0\cdot E_0\cdot F)=1$, $\coeff_{D_0}B=1$, and $(\pi^*(K_X)\cdot (B-D_0)\cdot F)=0$. Here note that $D_0\neq E_0$.
 \end{enumerate}
 \end{claim}
\begin{proof}
(1) Consider a general fiber $F$ of $f'$. By the construction of $W$, $\pi_F=\pi|_F: F\to F_X$ is the blow-up at the unique base point of $|K_{F_X}|$. Under the circumstance of no confusion, denote by $\mathfrak{e}_0$ the $\pi_F$-exceptional divisor on $F$.
Then $\mathfrak{e}_0$ is contained in the exceptional locus of $\pi$. As the general fiber $F$ moves, $\mathfrak{e}_0$ forms a  $\pi$-exceptional prime divisor $E_0$ such that $E_0|_F=\mathfrak{e}_0$ for a general $F$. 
In other words, $E_0$ is just the strict transform of the exceptional divisor of $X_2\to X_1$ on $W$. This implies that 
 $\coeff_{E_0}E''\geq 1$ and $\coeff_{E_0}E_\pi \geq 1$.
Consider a general fiber $C$ of $g$ with $C\subseteq F$, by the construction, $C$ can be also viewed as a general fiber of $\Phi_{|K_F|}:F\to \bP^1$.
It is clear that, in this case, $C\sim \pi_F^*K_{F_X}-\mathfrak{e}_0$.
Hence $(E_0\cdot C)=(E_0|_F\cdot C)=(\mathfrak{e}_0\cdot (\pi_F^*K_{F_X}-\mathfrak{e}_0))=1$.
In particular, $E_0$ dominates $\bF_a$ birationally as $C$ is a general fiber of $g$.
On the other hand, note that $C$ is of genus $2$ and $(S\cdot C)=(F\cdot C)=0$, we have
\begin{align*}
(E''\cdot C){}&=(\pi^*K_X\cdot C)=(\pi_F^*(K_{F_X})\cdot C)=1\\
(E_\pi\cdot C){}&=(K_W\cdot C)-(\pi^*K_X\cdot C)=1.
\end{align*}
This shows that $\coeff_{E_0}E''=\coeff_{E_0}E_\pi= 1$ and, since $\Supp(E_\pi)$ contains all $\pi$-exceptional divisors, there is no any other $\pi$-exceptional divisor dominating $\bF_a$.

(2) 
Note that $(B\cdot C)=((S-aF)\cdot C)=0$, which implies that $E_0\not \subseteq \Supp(B).$ So $D_0\neq E_0$ if exists. Also, for any component $D_1$ of $B$, we have $(D_1\cdot E_0\cdot F)\geq 0$. On the other hand, $(B\cdot E_0\cdot F)=(S\cdot E_0\cdot F)=(C\cdot E_0|_F)=1$. As $B$ is Cartier, there exists a unique component $D_0$ in $B$ such that $(D_0\cdot E_0\cdot F)>0$, moreover, $(D_0\cdot E_0\cdot F)=1$ and $\coeff_{D_0}B=1$. For the last statement, note that $B|_F\sim S|_F$ where the latter one gives a pencil of genus $2$ curves on $F$. Hence $B|_F$ is a special fiber of this pencil and $D_0$ is the unique component such that $D_0|_F$ intersects $\mathfrak{e}_0$. 
Hence ${\pi_F}_*(D_0|_F)$ passes through the unique base point $P$ of $|K_{F_X}|$ which implies
that $(K_{F_X}\cdot {\pi_F}_*(D_0|_F))\geq 1$. On the other hand, since
$B|_F\sim S|_F \sim C\sim \pi_F^*K_{F_X}-\mathfrak{e}_0$,
$$(K_{F_X}\cdot {\pi_F}_*(B|_F))= K_{F_X}^2=1.$$
This implies that
$(K_{F_X}\cdot {\pi_F}_*((B-D_0)|_F))=0$.
Hence
\begin{align*}
(\pi^*(K_X)\cdot (B-D_0)\cdot F){}&=(\pi^*(K_X)|_F\cdot (B-D_0)|_F)\\{}&=(\pi_F^*(K_{F_X})\cdot (B-D_0)|_F)=0.
\end{align*}
The claim is proved.
\end{proof}

\noindent {\bf Sub-step 2.5.}  ``Weak pseudo-effectivity'' of $3\pi^*K_X-(a-6)F$.

By \cite[Lemma 7.3]{Vie83}, there is a smooth birational model $\psi':\Sigma'\to \bF_a$ and a resolution $W'$ of $W\times_{\bF_a}\Sigma'$ giving a commutative diagram
$$\xymatrix@=4em{
W'\ar[d]_{\pi'} \ar[r]^{g'}& \Sigma'\ar[d]^{\psi'}\\
W \ar[r]^g & \bF_a}
$$
such that every $g'$-exceptional divisor is $\pi'$-exceptional. We may assume that $E'_0$ is smooth by taking further modification, where $E'_0$ is the strict transform of $E_0$ on $W'$. We may find an ample Cartier divisor $A_{{\Sigma'}}$ on ${\Sigma'}$ with the form $A_{{\Sigma'}}=\psi'^*A-E_{{\Sigma'}}$, where  $A$ is an ample Cartier divisor on $\bF_a$ and $E_{{\Sigma'}}$ is an effective $\psi'$-exceptional divisor on ${\Sigma'}$. We may write $A=t_1\ell+t_2\sigma_0$ for some positive integers $t_1>at_2$.

Firstly we construct some divisor whose support does not contain $E_0$.
\begin{claim}\label{claim D-E}
For any integer $m>0$, there exists an integer $c>0$ and an effective divisor
$$H_m\sim cmK_{W/\bF_a}+cmE_0+cg^*A$$
such that $E_0\not\subseteq \Supp(H_m)$.
\end{claim}

\begin{proof}
By Theorem \ref{thm wp} and the fact that $E'_0$ is smooth, for any integer $m>0$,
$$
\FF_m=g'_*\OO_{W'}(mK_{W'/\Sigma'}+mE'_0 )
$$
is weakly positive. In particular, there is a positive integer $c$ (taking $\alpha=1$ in the definition of weak positivity) such that
$$
\GG_m=({S}^c\FF_m)^{**}\otimes \OO_{\Sigma'}(cA_{\Sigma'})
$$
is generically globally generated on $\Sigma'$. It is clear that
$\GG_m\otimes k(y)$ corresponds to a base point free linear system on $C_y={g'}^{-1}(y)$ for a general point $y\in \Sigma'$. Hence, by the generic global generation, we can find a global section of $\GG_m$ not vanishing on the point $E'_0\cap C_y$ for a general $y$. Note that there is a natural map induced by multiplication of sections on fibers
$$\GG_m \to (g'_*\OO_{W'}(cmK_{W'/\Sigma'}+cmE'_0 ))^{**}\otimes \OO_{\Sigma'}(cA_{\Sigma'}),
$$
where the latter one is a subsheaf of
$$
g'_*\OO_{W'}(cmK_{W'/\Sigma'}+cmE'_0+E_{W'} )\otimes \OO_{\Sigma'}(cA_{\Sigma'}),
$$
for some effective $g'$-exceptional divisor  $E_{W'}$ as the non-locally-free locus is of codimension at least $2$.
Hence the global section of $\GG_m$ induces a global section of the last sheaf which does not vanish on the point $E'_0\cap C_y$ for a general $y$, that is,
an effective divisor
$$H'_m\sim cmK_{W'/\Sigma'}+cmE'_0+E_{W'} +c{g'}^*A_{\Sigma'}$$
such that $E'_0\not\subseteq \Supp(H'_m)$. Note that, by the construction, $E_{W'}$ is $\pi'$-exceptional. Pushing forward to $W$, we have
$$\pi'_*H'_m+\pi'_*{g'}^*(cm K_{\Sigma'/\bF_a}+E_{\Sigma'})\sim cmK_{W/\bF_a}+cmE_0+cg^*A.$$
Note that $cmK_{\Sigma'/\bF_a}+E_{\Sigma'}$ is an effective $\psi'$-exceptional divisor, hence $\pi'_*{g'}^*(cm K_{\Sigma'/\bF_a}+E_{\Sigma'})$ is effective and its support does not contain $E_0$ as $E_0$ dominates $\bF_a$.
Therefore we can just take $H_m:=\pi'_*H'_m+\pi'_*{g'}^*(K_{\Sigma'/\bF_a}+E_{\Sigma'})$.\end{proof}

Then we show the following inequality.
\begin{claim}\label{claim K-F pseff}
$((3\pi^*K_X-(a-6)F)\cdot {D_0}\cdot \pi^*K_X)\geq 0$.
\end{claim}
\begin{proof}
Note that
\begin{align*}
K_{W/\bF_a}+E_0{}&=(\pi^*K_X+E_\pi)+((a+2)F+2B)+E_0\\
{}&=3\pi^*K_X-(a-6)F+E_\pi+E_0-2E''.
\end{align*}
Write $E_\pi+E_0-2E''=L-N$ where $L$ and $N$ are effective $\bQ$-divisors with no common components, $L$ is $\pi$-exceptional and, clearly, the supports of $L$ and $N$ do not contain $E_0$ by coefficient computation.
Recall that $A=t_1\ell+t_2\sigma_0$ for some $t_1>at_2>0$. Now we consider
\begin{align*}
{}&H_m+cmN+ct_2E''\\
\sim{}& cmK_{W/\bF_a}+cmE_0+cg^*A+cmN+ct_2E''\\
={}&cm(3\pi^*K_X-(a-6)F+L-N)+c(t_1F+t_2B)+cmN+ct_2E''\\
\sim{}&cm(3\pi^*K_X-(a-6)F)+c(t_2\pi^*K_X+(t_1-at_2+2t_2)F)+cmL.
\end{align*}
Note that $F=\pi^*F_X$ and $L$ is effective $\pi$-exceptional, hence $cmL$ is contained in the fixed part of the above divisor. This implies that $H_m+cmN+ct_2E''-cmL$ is an effective divisor. Dividing this divisor by $cm$,  we obtain an effective $\bQ$-divisor $G_m$ such that
$$
G_m\sim_\bQ3\pi^*K_X-(a-6)F+\frac{1}{m}(t_2\pi^*K_X+(t_1-at_2+2t_2)F).
$$
Also note that $\coeff_{E_0}G_m=\frac{1}{m}\coeff_{E_0}t_2E''= \frac{t_2}{m}$ since by construction $E_0$ is not a component of $H_m$, $N$, $L$.
Now denote
\begin{align*}
\mu_m:=\coeff_{D_0}G_m=\coeff_{D_0}\Big(G_m-\frac{t_2}{m}E_0\Big).
\end{align*}
That is, we may write $G_m-\frac{t_2}{m}E_0=\mu_m D_0+G'$, where $G'$ is effective and its support does not contain $E_0$ or $D_0$.
Hence
$$\Big(\Big(G_m-\frac{t_2}{m}E_0\Big)\cdot E_0\cdot F\Big)=((\mu_m D_0+G')\cdot E_0\cdot F)\geq \mu_m.$$
Here we use the facts that $(D_0\cdot E_0\cdot F)=1$, $F$ is nef, and $(G'\cdot E_0)$ is an effective cycle. 
On the other hand, since both $G_m$ and $F$ are $\pi$-trivial and $E_0$ is $\pi$-exceptional, we have $(G_m\cdot F\cdot E_0)=0$ and hence
\begin{align*}
\Big(\Big(G_m-\frac{t_2}{m}E_0\Big)\cdot E_0\cdot F\Big)= -\frac{t_2}{m} (E_0|_F)^2.
\end{align*}
In particular, $\lim_{m\to \infty}\mu_m=0$. Hence we have
\begin{align*}
{}&((3\pi^*K_{X}-(a-6)F)\cdot {D_0}\cdot \pi^*K_X)\\=&\lim_{m\to \infty} (G_m\cdot {D_0}\cdot \pi^*K_X)\\
=&\lim_{m\to \infty} ((G_m-\mu_mD_0)\cdot {D_0}\cdot \pi^*K_X)\geq 0.\end{align*}
For the last inequality, we just use the fact that $D_0$ is not contained in the support of the effective $\bQ$-divisor $G_m-\mu_mD_0$.
\end{proof}

\noindent {\bf Sub-step 2.6.}  The main inequality for Step 2.

Note that, by Claim \ref{E0D0}(2),
\begin{align*}
{}&( (3\pi^*K_X- (a-6)F )\cdot (B-D_0)\cdot \pi^*K_X )\\= {}& (3\pi^*K_X\cdot (B-D_0)\cdot \pi^*K_X )\geq 0.
\end{align*}
Hence Claim \ref{claim K-F pseff} implies that
$$
((3\pi^*K_X- (a-6)F )\cdot B\cdot \pi^*K_X )\geq 0.
$$
Since $(F \cdot B\cdot \pi^*K_X )=(F \cdot S\cdot \pi^*K_X )=(C\cdot \pi^*K_X |_F)=1$,
hence
$$
(\pi^*K_X^2\cdot B)\geq \frac{a-6}{3}(F \cdot B\cdot \pi^*K_X )\geq \frac{a}{3}-2.
$$
Finally, we have
\begin{align*}
K_X^3={}&(\pi^*K_X^2\cdot (\pi^*K_X+2F))-2\\
\geq{}& (\pi^*K_X^2\cdot (aF+B))-2\\
\geq{}&\frac{4a}{3}-4=\frac{4}{3}p_g(X)-\frac{8}{3}.
\end{align*}
The proof is completed.
\end{proof}

\begin{remark}
Let us give a historical remark on the proof of the last case (Theorem \ref{last}). Assuming $X$ is Gorenstein, there are two approaches to the last case in history, one as in \cite{MChen04JMSJ} and one as in \cite{MChen04MRL, CCZ06}. Both approaches use the fact that $\Mov|K_X|$ is base point free, which automatically holds as  $X$ is Gorenstein (Corollary \ref{base12}(3)). The latter one gives a better inequality by considering the linear system $|2K_X|$, but it essentially uses the fact that $h^0(2K_X)=\frac{1}{2}K_X^3-3\chi(\OO_X)$ which holds if and only if $X$ is Gorenstein, and thus not available in our setting. Here our proof (up to Step 1) follows the former approach, and shows firstly a weaker inequality $K_X^3\geq \frac{4}{3}p_g(X)-\frac{14}{3}$ as in \cite{MChen04JMSJ}. So, in the end, we have to put our effort in Step 2 to improve this weaker inequality by analyze exceptional cases which is not known before. This involves special geometry of $(1,2)$-surfaces and applying  weak positivity of direct images to refine the estimate of intersection numbers.
\end{remark}

\subsection{Proof of main theorems}\

\begin{proof}[Proof of Theorem \ref{main2}]
Let $X$ be a minimal projective $3$-fold of general type. If $p_g(X)\leq 4$, the inequality follows from \cite[Theorem 1.5]{MChen07}. We may always assume that $p_g(X)\geq 5$.
 We may always consider the non-trivial canonical map $\varphi_1=\Phi_{|K_X|}$ defined by the canonical linear system $|K_X|$. Set $d_X:=\dim \overline{\varphi_1(X)}$.

If $d_X=3$, then $K_X^3 \ge 2p_g(X)-6\ge \frac{4}{3}p_g(X)-\frac{10}{3}$ by \cite{Kob92} or \cite[Proposition 3.1]{MChen04JMSJ}.

If $d_X=2$, then the inequality follows from Theorems \ref{d2g3} and \ref{d2g2}.

If $d_X=1$ and $|K_X|$ is not composed with a rational pencil of $(1,2)$-surfaces, then either $|K_X|$ is not composed with a pencil of $(1,2)$-surfaces, or $|K_X|$ is composed with an irrational pencil of $(1,2)$-surfaces. The former case is done by Theorems \ref{not 11 not 12} and \ref{1,1} with $p_g(X)\geq 5$, and the latter case follows from Corollary \ref{base12}(1) and Theorem \ref{last} after replacing $X$ with a minimal model $Y$ with $\Mov|K_Y|$ free.

If $d_X=1$, $|K_X|$ is composed with a rational pencil of $(1,2)$-surfaces, and $p_g(X)\geq 21$, then the inequality follows from Corollary \ref{base12}(2) and Theorem \ref{last} after replacing $X$ with a minimal model $Y$ with $\Mov|K_Y|$ free.
\end{proof}

\begin{proof}[Proof of Theorem \ref{main}]
Let $X$ be a projective $3$-fold of general type. Take a resolution $W\to X$ and take $Y$ to be a minimal model of $W$, which is a minimal projective $3$-fold of general type birational to $X$. It is clear that $p_g(X)=p_g(W)=p_g(Y)$ and $\vol(X)=\vol(W)=K_Y^3$. Hence the inequality follows from Theorem \ref{main2}.
\end{proof}

\appendix

\section{\bf Surfaces with $K^2=1$ and  $p_g=2$}\label{appendix}

\begin{center}
by J\'{a}nos Koll\'{a}r\\
{\small Department of Mathematics, Fine Hall, Washington Road\\
Princeton University,  Princeton, NJ 08544-1000  USA\\
e-mail:  kollar@math.princeton.edu}
\end{center}

\medskip

Given a variety $X$ and a divisor class $H$,
it is an interesting general problem to compute
$\lct(X;H)$, which is the infimum of the $\lct(X;\Delta)$ where
 $\Delta$ is an effective $\q$-divisor such that $\Delta\equiv H$.
This problem has received a lot of attention when $H=-K_X$ is ample, see for example \cite{Chel08, PW10, PW11, CK14}.
Here we  are mainly interested in the case when $S$ is a  surface with $K^2=1$, $p_g=2$ and
$H=K_S$.

\begin{thm} \label{lcth.from.h0.cor}
Let $S$ be a projective surface with Du~Val singularities. Assume that $K_S$ is  ample, $(K_S^2)=1$ and $h^0\bigl(S, \OO_S(2K_S)\bigr)=4$. Let $\Delta_S$ be an effective $\q$-divisor such that $\Delta_S\equiv K_S$. Then $\lct(S;\Delta_S)\geq \tfrac1{10}$.
\end{thm}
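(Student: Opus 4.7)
The plan is to argue by contradiction, exploiting the fact that the hypotheses on $S$ furnish a base-point-free bicanonical system $|2K_S|$ of projective dimension $3$ with $(2K_S)^2=4$ (base-point-freeness for canonical models of surfaces of general type with these invariants is standard). Suppose some effective $\bQ$-divisor $\Delta_S\equiv K_S$ gave $\lct(S,\Delta_S)<\tfrac{1}{10}$, so that $(S,\tfrac{1}{10}\Delta_S)$ fails to be log canonical at some subvariety $Z\subset S$.

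First, I would rule out the possibility that $Z$ is a curve. If the non-lc locus contained an integral curve $C$, then the coefficient of $C$ in $\Delta_S$ would strictly exceed $10$, so $\Delta_S-10C$ is nonzero and effective, whence $(\Delta_S-10C)\cdot K_S\geq 0$ and therefore $K_S\cdot C<\tfrac{1}{10}<1$. This contradicts the ampleness of $K_S$, which forces $K_S\cdot C\geq 1$ for every integral curve $C$. Hence every non-lc center of $(S,\tfrac{1}{10}\Delta_S)$ must be a closed point $p\in S$.

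Next, I would handle the case when $p$ is a smooth point of $S$. Failure of lc at a smooth surface point $p$ forces $\mult_p(\tfrac{1}{10}\Delta_S)>1$, i.e. $\mult_p\Delta_S>10$. Since $|2K_S|$ is base-point-free, has projective dimension $3$, and $\Delta_S$ has only finitely many irreducible components, a short dimension count produces $D\in|2K_S|$ with $p\in D$ and no component in common with $\Delta_S$. The inequality $D\cdot\Delta_S\geq(\mult_p D)(\mult_p\Delta_S)\geq 10$ then contradicts $D\cdot\Delta_S=(2K_S)\cdot K_S=2$, settling this case (with the much easier bound $\tfrac{1}{2}$ in fact).

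The main obstacle is the case when $p$ is a Du~Val singularity, which is where the sharp constant $\tfrac{1}{10}$ is really tested. I would pass to the minimal resolution $\pi:\widetilde{S}\to S$, which is crepant, so that $(\widetilde{S},\tfrac{1}{10}\pi^*\Delta_S)$ also fails to be lc above $p$. Writing $\pi^*\Delta_S=\widetilde{\Delta}+\sum\beta_iE_i$ with $E_i$ the exceptional $(-2)$-curves, the coefficients $\beta_i\geq 0$ are determined by $\pi^*\Delta_S\cdot E_j=0$, and the Du~Val intersection combinatorics together with $\widetilde{\Delta}\cdot K_{\widetilde{S}}=1$ constrain each $\beta_i$. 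Failure of lc at some $\tilde{p}\in\pi^{-1}(p)$ forces $\mult_{\tilde{p}}\pi^*\Delta_S>10$, and intersecting with $\pi^*D$ for a $D\in|2K_S|$ through $p$, chosen using the $3$-dimensionality of $|2K_S|$ so that $\pi^*D$ shares no component with $\pi^*\Delta_S$ through $\tilde{p}$, retains $\pi^*D\cdot\pi^*\Delta_S=2$; a careful accounting of the exceptional contributions (through the values $\beta_i$ and the local combinatorics at $\tilde{p}$) to the multiplicities then yields the needed numerical contradiction. Pinning down the sharp constant $\tfrac{1}{10}$ through this delicate case-by-case bookkeeping for the various Du~Val types, where the full strength of $h^0(2K_S)=4$ really has to be used, is the central technical difficulty.
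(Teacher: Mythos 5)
Your argument for a smooth non-lc center is correct, but it only yields the bound $\tfrac12$, and the theorem is easy at smooth points; the only content is at Du~Val singularities. For those you propose to pull $\Delta_S$ back to the minimal resolution and do a multiplicity/intersection-number bookkeeping against a suitably chosen member of $|2K_S|$. You explicitly acknowledge that this bookkeeping is ``the central technical difficulty,'' and it is precisely the step you do not carry out. That is a genuine gap, not a deferred computation: the approach gives no structural reason why the answer should be $\tfrac{1}{10}$, and the intersection $\pi^*D\cdot\pi^*\Delta_S$ on the resolution is dominated by exceptional contributions (both divisors contain the $(-2)$-curves with coefficients forced by $\pi^*(\cdot)\cdot E_j=0$), so the ``interesting'' local multiplicity at a point $\tilde p$ does not separate out in the naive way your $D\cdot\Delta_S=2$ calculation suggests.

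The paper's proof proceeds along an entirely different axis and is worth internalizing because it explains where the constant comes from. Kollár does not use the minimal resolution at all. Instead, Nadel vanishing applied to $K_S+H\equiv K_S+\epsilon H+(1-\epsilon)\Delta_S$ gives a surjection $H^0(S,\OO_S(K_S+H))\onto \OO_S/\mathcal J^+(\Delta_S)$, so a small $\lct$ forces $h^0(2K_S)$ to be at least the length of the multiplier ideal quotient. That length is computed on the local smooth cover $(0\in\cc^2,\Delta)/G$ with $G\subset\SL_2$, where $\OO_S/\mathcal J^+(\Delta_S)$ is the $G$-invariant part of $\cc[x,y]/\mathcal J^+(\Delta)$; classical invariant theory (degrees of generating invariants for the binary dihedral and polyhedral groups) then bounds the number of low-degree invariants. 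That argument already yields $\lct\geq\tfrac1{10}$ for every singularity type except $E_8$, where it only gives $\tfrac1{16}$; the sharp $\tfrac1{10}$ is what the $E_7$ invariants ($0,8,12,16,18,\dots$) produce. For $E_8$ a separate refinement is needed: a curve $C\in|2K_S|$ whose preimage on $\cc^2$ has multiplicity $12$ at the origin, together with the decomposition $\Delta_S=\tfrac12\lambda C+(1-\lambda)\Delta_S'$ and the intersection bound $\mult_0\Delta'\cdot 12\leq 120\cdot(\Delta_S'\cdot C)=240$, forces $\mult_0\Delta\leq 20$, and then the non-abelian lc criterion $\lct=2/\mult_0\Delta$ gives $\tfrac1{10}$. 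Your proposal neither reproduces this mechanism nor offers a substitute for the invariant-theoretic input, and crucially does not identify $E_8$ as the case requiring extra work.
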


The proof has 3 parts. First, in Proposition~\ref{lcth.from.h0.prop} we estimate a global $H^0$ from below using a local invariant, called
 the {\it minimal multiplier codimension}
$$
\operatorname{mcd}(c):=\min_{G, \Delta} \Bigl\{\dim\Bigl(\bigl(\cc[x,y]/{\mathcal J}^+(\Delta)\bigr)^G\Bigr)\Bigr\},
\eqno{(\ref{lcth.from.h0.cor}.1)}
$$
where  $G$ runs though all finite subgroups of $\SL_2(\cc)$,   $\Delta$ runs through all $G$-invariant divisors such that $\lct_0(\cc^2; \Delta)<c$ and
we use the upper multiplier ideal
${\mathcal J}^+(\Delta):= {\mathcal J}((1-\epsilon)\Delta)$
for $0<\epsilon\ll 1$ \cite[9.2.1]{laz-book}. Then we compute  $\operatorname{mcd}(c) $ in  Proposition~\ref{mcd.prop}.
Finally we need to look more carefully at the case when $S$ has an $E_8$ singularity.

\begin{exmple} Consider the pair
$$
S:=\bigl(x^7y^3+y^{10}+z^5+t^2=0\bigr)\subset \p(1,1,2,5)
\qtq{and}  \Delta:= (y=0).
$$
It is easy to check that $S$ has a unique singular point, at $(1{:}0{:}0{:}0)$, and it has type $E_8$. Thus
$S$ is a projective surface with Du~Val singularities, $K_S=\OO_S(1)$ is  ample, $(K_S^2)=1$ and  $h^0\bigl(S, \OO_S(2K_S)\bigr)=4$. Furthermore,
$\lct(S;\Delta)= \tfrac1{10}$. The latter can be checked directly but it is easiest to see using the local universal cover at the singularity as in the proof of Theorem~\ref{lcth.from.h0.cor} below.
\end{exmple}

\begin{prop} \label{lcth.from.h0.prop}
Let $S$ be a projective surface with Du~Val singularities and $H$ an ample Cartier divisor such that $(H^2)=1$. Let $\Delta_S$ be an effective $\q$-divisor such that $\Delta_S\equiv H$. Then
$h^0\bigl(S, \OO_S(K_S+H)\bigr)\geq \operatorname{mcd}\bigl(\lct(S;\Delta_S)\bigr)$.
\end{prop}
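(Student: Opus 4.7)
The plan is to combine Nadel vanishing on $S$ with a local orbifold lift at a worst singular point, followed by a tiny perturbation to fit the strict inequality $\lct_0(\Delta)<c$ in the definition of $\operatorname{mcd}(c)$. Set $c:=\lct(S,\Delta_S)$. If $c\geq 1$ then $\operatorname{mcd}(c)=0$ (for instance, $G=\{1\}$ together with a smooth divisor carrying a small fractional coefficient is an admissible witness with trivial quotient), so the inequality is vacuous; I therefore assume $c<1$, so that $(S,\Delta_S)$ fails to be klt somewhere. The hypothesis $(H^2)=1$ forces every prime component of $\Delta_S$ to appear with coefficient at most $1$, which in turn makes the non-lc locus of $(S,\Delta_S)$ zero-dimensional; hence the sheaf $\OO_S/\mathcal{J}^+(\Delta_S)$ is supported on finitely many points.

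The first step is to apply Nadel vanishing to $(1-\epsilon)\Delta_S$ for $0<\epsilon\ll 1$: since $H-(1-\epsilon)\Delta_S\equiv \epsilon H$ is ample on the Gorenstein surface $S$,
\[H^1\bigl(S,\OO_S(K_S+H)\otimes\mathcal{J}^+(\Delta_S)\bigr)=0,\]
and therefore $h^0(S,\OO_S(K_S+H))\geq\dim_\cc H^0(S,\OO_S/\mathcal{J}^+(\Delta_S))$; by the preceding remark this global $H^0$ decomposes as the sum of local lengths at the isolated non-lc points. Next, pick a point $p\in S$ where $\lct_p(S,\Delta_S)=c$ (attained by lower semi-continuity) and pass to the local universal cover $\pi\colon(\cc^2,0)\to(S,p)$ with deck group $G\subset\SL_2(\cc)$. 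Since Du~Val singularities are crepant ($K_{\cc^2}=\pi^*K_S$) and $\pi$ is étale in codimension one, multiplier ideals transform cleanly under pullback, and taking $G$-invariants yields
\[\bigl(\OO_{S,p}/\mathcal{J}^+(\Delta_S)_p\bigr)\cong\bigl(\cc[x,y]/\mathcal{J}^+(\Delta)\bigr)^G,\qquad\Delta:=\pi^*\Delta_S,\]
with $\lct_0(\Delta)=c$. Because the local contribution at $p$ is a summand of the global $H^0$, one deduces $h^0(S,\OO_S(K_S+H))\geq\dim_\cc(\cc[x,y]/\mathcal{J}^+(\Delta))^G$.

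Finally, to move from $\lct_0(\Delta)=c$ on the boundary to a divisor with $\lct_0<c$ as required, scale: set $\Delta':=(1+\delta)\Delta$ for $\delta>0$ small. Then $\Delta'$ is $G$-invariant and $\lct_0(\Delta')=c/(1+\delta)<c$. Since the jumping numbers of $\Delta$ form a discrete (indeed finite) set, for $\epsilon,\delta>0$ sufficiently small both $(1-\epsilon)$ and $(1-\epsilon)(1+\delta)$ lie in the same jumping interval below $1$, so
\[\mathcal{J}^+(\Delta')=\mathcal{J}((1-\epsilon)(1+\delta)\Delta)=\mathcal{J}((1-\epsilon)\Delta)=\mathcal{J}^+(\Delta).\]
The pair $(G,\Delta')$ is therefore admissible in the definition of $\operatorname{mcd}(c)$, yielding $\dim_\cc(\cc[x,y]/\mathcal{J}^+(\Delta))^G=\dim_\cc(\cc[x,y]/\mathcal{J}^+(\Delta'))^G\geq\operatorname{mcd}(c)$, which combined with the earlier chain finishes the proof. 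The step I expect to demand the most technical care is the compatibility of $\mathcal{J}^+$ under the finite crepant cover $\pi$, together with the matching identification of the local colength on $S$ with the $G$-invariants on $\cc^2$: although this is standard folklore for étale-in-codimension-one crepant Galois covers, the base is singular and one must track carefully that $\pi^{-1}\mathcal{J}^+(\Delta_S)\cdot\OO_{\cc^2}=\mathcal{J}^+(\pi^*\Delta_S)$ with the correct normalisation of multiplier ideals on a canonical Gorenstein surface.
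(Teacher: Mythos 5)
Your overall strategy is the same as the paper's: Nadel vanishing applied to $K_S+H \equiv K_S+\epsilon H+(1-\epsilon)\Delta_S$ gives a surjection onto $H^0\bigl(S,\OO_S(K_S)\otimes\OO_S/\mathcal{J}^+(\Delta_S)\bigr)$ whose target has length $\dim\OO_S/\mathcal{J}^+(\Delta_S)$, the hypothesis $(H^2)=1$ forces $\rdown{(1-\epsilon)\Delta_S}=0$ so that the non-lc locus is zero-dimensional, and the local length at a singular point is identified with the $G$-invariants of $\cc[x,y]/\mathcal{J}^+(\Delta)$ on the smooth cover via \cite[9.5.42]{laz-book}. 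All of this matches Koll\'ar's proof.

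Where you part ways is the final scaling step, and that step is where the argument breaks. You introduce $\Delta':=(1+\delta)\Delta$ to force $\lct_0(\Delta')<c$ and then assert $\mathcal{J}^+(\Delta')=\mathcal{J}^+(\Delta)$ on the grounds that $(1-\epsilon)$ and $(1-\epsilon)(1+\delta)$ ``lie in the same jumping interval below $1$.'' But the $\epsilon$ in $\mathcal{J}^+(\Delta'):=\mathcal{J}\bigl((1-\epsilon)\Delta'\bigr)$ must be taken $\ll 1$ \emph{after} $\Delta'$ is fixed, hence after $\delta$ is fixed; one is not free to keep $\epsilon>\delta$. With $\delta$ fixed and $\epsilon\to 0$, the parameter $(1-\epsilon)(1+\delta)$ tends to $1+\delta>1$. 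Consequently, if $1$ is a jumping number of $\Delta$ at $0$ (which nothing in the setup excludes), then $\mathcal{J}^+(\Delta')=\mathcal{J}(\Delta)\subsetneq\mathcal{J}((1-\epsilon)\Delta)=\mathcal{J}^+(\Delta)$, so your claimed equality fails. In that case the chain collapses: the witness $(G,\Delta')$ gives $\operatorname{mcd}(c)\leq\dim\bigl(\cc[x,y]/\mathcal{J}^+(\Delta')\bigr)^G$, but this right-hand side is \emph{at least} $\dim\bigl(\cc[x,y]/\mathcal{J}^+(\Delta)\bigr)^G$, which is the quantity Nadel controls from above by $h^0$ — so no comparison with $h^0$ follows. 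The paper itself does not perform any scaling; it writes down $h^0\geq\dim\bigl(\cc[x,y]/\mathcal{J}^+(\Delta)\bigr)^G$ and treats this as the content of the proposition, with the strict inequality in the definition of $\operatorname{mcd}$ read as a technical convention compatible with the way $\operatorname{mcd}$ is then computed in Proposition~\ref{mcd.prop}. If you want to make the boundary case explicit you would need a different device than multiplying $\Delta$ by $(1+\delta)$, since that manifestly does not preserve the upper multiplier ideal across a jump at $1$.

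One further small point: the reduction ``if $c\geq 1$ then $\operatorname{mcd}(c)=0$, so the inequality is vacuous'' is a harmless observation, but the paper does not need it, because the argument via Nadel vanishing and the local lift is uniform in $c$ and gives $0$ on the right when $\mathcal{J}^+(\Delta_S)=\OO_S$.
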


\begin{proof}If $\Delta_S$ contains a curve $B$ with coefficient $b$ then
$b\leq b(B\cdot H)\leq (H^2)$. Thus $\rdown{(1-\epsilon)\Delta_S}=0$ and
$\bigl(S, (1-\epsilon)\Delta_S\bigr)$  has only isolated non-log-canonical centers.
Writing
$K_S+H\equiv K_S+ \epsilon H+ (1-\epsilon)\Delta_S$,
Nadel's vanishing  \cite[9.4.17]{laz-book}
 gives a surjection
$$
H^0\bigl(S, \OO_S(K_S+H)\bigr)\onto  H^0\bigl(S,  \OO_S(K_S)\otimes \OO_S/   {\mathcal J}^+(\Delta_S)\bigr),
\eqno{(\ref{lcth.from.h0.prop}.1)}
$$
and the right hand side has dimension $=\dim \OO_S/   {\mathcal J}^+(\Delta_S)$.
The computation of $\OO_S/   {\mathcal J}^+(\Delta_S) $ is local in the Euclidean topology.
We can thus write  $(s\in S,\Delta_S)$ as $(0\in \cc^2,\Delta)/G$ for some $G\subset \SL_2$ where $\Delta$ is the pullback of $\Delta_S$ through the quotient map.
By \cite[9.5.42]{laz-book}, $\OO_S/{\mathcal J}^+(\Delta_S)$ is
 the $G$-invariant part of
$\cc[x,y]/{\mathcal J}^+(\Delta)$. Thus
$$
h^0\bigl(S, \OO_S(K_S+H)\bigr)\geq \dim\Bigl(\bigl(\cc[x,y]/{\mathcal J}^+(\Delta)\bigr)^G\Bigr).
\eqno{(\ref{lcth.from.h0.prop}.2)}
$$
\end{proof}

\begin{prop} \label{mcd.prop} The inverse function of $\operatorname{mcd}$ is computed by the following table.
$$
\begin{array}{rccccccccccc}
\operatorname{mcd}(c)=   &0&1 & 2 & 3 & 4 & 5 & 6 & n=7{-}9 & n=10{-}13 &  n\geq 14\\[1ex]
c\geq &1 & \tfrac1{7}  &  \tfrac1{11} & \tfrac1{13} & \tfrac1{16} &\tfrac1{17} & \tfrac1{19} & \tfrac1{n+14}  & \tfrac1{n+15}  &  \tfrac1{2n+1}
\end{array}
$$
\end{prop}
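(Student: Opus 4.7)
The plan is a case analysis on the finite subgroup $G\subset\SL_2(\cc)$, which by the McKay correspondence runs through the Dynkin types $A_n$, $D_n$, $E_6$, $E_7$, $E_8$ together with the trivial group. Since $G\subset\SL_2$ acts without pseudo-reflections, the quotient $\pi\colon \cc^2\to Y_G:=\cc^2/G$ is crepant; consequently each $G$-invariant effective $\q$-divisor $\Delta$ on $\cc^2$ descends to an effective $\q$-Cartier divisor $\bar\Delta$ on the Du~Val germ $Y_G$ with the same log canonical threshold, and $\dim\bigl((\cc[x,y]/{\mathcal J}^+(\Delta))^G\bigr)=\dim\bigl(\OO_{Y_G,0}/{\mathcal J}^+(\bar\Delta)\bigr)$ (using that $G$ is linearly reductive, so taking $G$-invariants is exact). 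Thus the problem becomes: among pairs $(Y_G,\bar\Delta)$ with $\lct_0(\bar\Delta)<c$, minimize $\dim(\OO_{Y_G,0}/{\mathcal J}^+(\bar\Delta))$.

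For the smooth case $G=1$ the key input is the Newton-triangle formula: for $\Delta=(x^a+y^b)$, $\lct_0=1/a+1/b$ and $\dim(\cc[x,y]/{\mathcal J}^+(\Delta))=\#\{(i,j)\in\ZZ_{\geq0}^2 : (i+1)b+(j+1)a<ab\}$, reducing the optimization to an elementary lattice point count. For the cyclic types $A_{n-1}$ the germ $Y_G$ is toric, invariant divisors are monomial in $\cc[xy,x^n,y^n]$, and Howald's toric formula for multiplier ideals again reduces the computation to a lattice point count in a weighted Newton polytope; this is the construction that will produce the asymptotic threshold $c=\tfrac1{2n+1}$. For the remaining types $D_n,E_6,E_7,E_8$ the invariant ring is classically presented as $\cc[f,g,h]$ modulo one relation with known degrees, and every $G$-invariant $\q$-divisor is a positive $\q$-combination of the three coordinate divisors of $f,g,h$; both $\lct_0$ and the $G$-invariant codimension can then be read off from the weighted grading on $\cc[x,y]$.

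Collating the three cases yields candidate minima for every $c$, and the table records the smallest value attained at each $n$. I expect the exceptional groups $E_6,E_7,E_8$ to govern the small thresholds $\tfrac17,\tfrac1{11},\tfrac1{13},\tfrac1{16},\tfrac1{17},\tfrac1{19}$ that appear for $n\leq6$, the cyclic groups to govern the asymptotic range $n\geq14$ via $\tfrac1{2n+1}$, and the intermediate range $n=7,\dots,13$ to arise from a crossover between the exceptional and cyclic constructions---which is exactly what the piecewise formulas $1/(n+14)$ and $1/(n+15)$ suggest. The hard part is not any individual computation, each of which is routine once the invariant ring is written down, but the exhaustiveness of the optimization: one must verify that no rescaling $\Delta\mapsto\lambda\Delta$ and no further invariant-theoretic configuration beats the listed candidates, and pin down the exact crossover values $n=6\to 7$, $n=9\to 10$ and $n=13\to 14$ where the threshold formula changes.
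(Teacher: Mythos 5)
Your outline resembles the paper's in broad strokes (a Dynkin-type case analysis using the classical presentations of the invariant rings), but it misses the one lemma that makes the whole computation tractable, and it attributes the asymptotic formula to the wrong groups.

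First, the paper's proof rests on Lemma~\ref{D.E.lem}: if $G\subset\GL_2$ is a \emph{non-abelian} finite subgroup, then a $G$-invariant divisor $\Delta$ satisfies $\lct_0(\Delta)=2/\mult_0(\Delta)$ (because $G$ acts on the exceptional $\p^1$ of the blow-up of the origin with no fixed points, so $\Delta'|_E$ is forced to have all coefficients $\leq1$). This is what turns the non-abelian computation into a pure degree count: $\lct(\Delta)<\tfrac1r$ forces ${\mathcal J}^+(\Delta)\subset(x,y)^{2r-1}$, and one simply counts $G$-invariants of degree $\leq 2r-2$. Your proposal never isolates this fact; the claim that ``both $\lct_0$ and the $G$-invariant codimension can then be read off from the weighted grading'' is not justified, since without the $\lct=2/\mult$ identity the log canonical threshold of a general invariant divisor is not a simple function of degrees.

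Second, you propose that cyclic groups (type $A$) govern the asymptotic range $n\geq 14$ and produce the threshold $1/(2n+1)$. This is backwards. Since $xy$ is already a low-degree invariant for every cyclic $G\subset\SL_2$, the quotient $(\cc[x,y]/{\mathcal J}^+(\Delta))^G$ always contains the powers $(xy)^k$ in low degree, so the abelian case yields strictly \emph{more} invariants than the non-abelian one (the paper says this explicitly) and hence never achieves the minimum. The asymptotic range is governed by binary dihedral groups $D_r$ with $r>2n+2$, whose only low-degree invariants are powers of $(xy)^2$, giving roughly half as many. Relatedly, the transition points in the table ($n=6\to7$, etc.) come from the shape of the $E_8$ degree sequence $0,12,20,24,30,\dots$, not from any $E\leftrightarrow A$ ``crossover.''

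Third, the assertion that ``every $G$-invariant $\q$-divisor is a positive $\q$-combination of the three coordinate divisors of $f,g,h$'' is simply false: a $G$-invariant divisor is cut out by an arbitrary $G$-semi-invariant power series, which typically does not factor into powers of the three generating invariants. As written, these three gaps mean your computation would not reproduce the table; the constant $1/(2n+1)$ cannot be extracted from cyclic groups, and the bounded part of the table cannot be carried out without the multiplicity criterion for the non-abelian case.
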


\begin{proof}
We need to
study pairs  $(\cc^2, \Delta)$ where $\Delta=d\cdot \bigl(g(x,y)=0\bigr)$ is a $G$-invariant divisor such  that $\lct_0(\cc^2; \Delta)<c$.
We distinguish 2 cases.

(Abelian case.)  The argument in \cite[6.40]{ksc}  (going back to Varchenko) proves   that  $\lct_0(\cc^2;\Delta)$ is computed by suitable weighted coordinates. The proof also works equivariantly for  abelian group actions.
So in suitable (local analytic) coordinates $x,y$ and weights  $w_x, w_y$,
$$
g(x,y)\in \bigl( x^iy^j:   iw_x+jw_y> \tfrac2{cd}\cdot \tfrac{w_x+w_y}{2}\bigr).
\eqno{(\ref{mcd.prop}.1)}
$$
Thus, by \cite[9.3.27]{laz-book},
$$
{\mathcal J}^+(\Delta)\subset \bigl( x^iy^j:   iw_x+jw_y\geq \bigl(\rdown{\tfrac2{c}}-1\bigr) \cdot \tfrac{w_x+w_y}{2}\bigr).
\eqno{(\ref{mcd.prop}.2)}
$$
 Since $xy$ is $G$-invariant for $A_n$, we get invariants $(xy)^i$ for  $i\leq \tfrac1{c}-2$  (and other invariants if $n$ is small).
There are always more $G$-invariants  than in the non-abelian case.

(Non-abelian case.) By Lemma~\ref{D.E.lem} we have
 $\lct_0(\cc^2; \Delta)=2/(\mult_0\Delta)$ and in
(\ref{mcd.prop}.1) we can take $w_x=w_y=1$.
Thus we obtain that if $r$ is an integer and  $\lct_0(\cc^2; \Delta)<\tfrac 1{r}$ then
${\mathcal J}^+(\Delta)\subset (x,y)^{2r-1}$. Hence if  $\tfrac1{r+1}\leq c<\tfrac 1{r}$ then
$$
\operatorname{mcd}(c):=\min_{G} \Bigl\{\dim\Bigl(\bigl(\cc[x,y]/(x,y)^{2r-1}\bigr)^G\Bigr)\Bigr\},
\eqno{(\ref{mcd.prop}.3)}
$$
where  $G$ runs though all non-abelian finite subgroups of $\SL_2(\cc)$.
We thus need to compute the dimension of the space  of $G$-invariant polynomials in $\cc[x,y]/(x,y)^{2r-1}$ as a function of $r$ and $G$.
It has been known at least since  Felix Klein
 that the ring of $G$-invariants
has 3 homogeneous  generators.
The following table lists their degrees; see \cite[Secs.34--39]{MR0169108}.
$$
\begin{array}{cc}
\mbox{singularity} &  \mbox{degrees of generators of invariants}\\
D_n & 4, 2n{-}4, 2n{-}2\\
E_6 & 6,8,12\\
E_7 & 8,12,18 \\
E_8 &  12,20,30 \\
\end{array}
\eqno{(\ref{mcd.prop}.4)}
$$
A quick hand computation shows
 that $E_8$ has the fewest invariants for degrees $\leq 56$ and
the entries for $n=0,\dots, 13$ are computed as follows.
Let $I:=I_{120}$ denote the binary icosahedral group acting linearly on $\cc[x,y]$. Pick $m\leq 56$ such that there is an $I$-invariant of degree $m$. This gives a threshold value of $\frac2{m+2}$ and above it we put  the dimension of the space  of $I$-invariant polynomials of degree $\leq m$.

The  entries for $n\geq 14$ are computed from the binary dihedral groups
$D_r$ for  $r> 2n+2$, whose only invariants of degree $<4n$ are the  powers of  $(xy)^2$.
\end{proof}

\begin{say}[Proof of Theorem~\ref{lcth.from.h0.cor}]
We see that  $\operatorname{mcd}\bigl(\lct(S;\Delta_S)\bigr)\leq 4$  by
Proposition~\ref{mcd.prop}. For $E_8$ we have invariants in degrees
$0,12,20,24,30, \dots$, thus (\ref{mcd.prop}.3)
 gives that $\lct(S;\Delta_S)\geq\tfrac1{16}$.
The next worst case is $E_7$ with invariants in degrees
$0,8,12, 16, 18, \dots$ and then  $\lct(S;\Delta_S)\geq\tfrac1{10}$, as needed.

It remains to look in more detail at the $E_8$ cases.
Again we work with the cover $\pi:(0,\cc^2)\to (s,S)$.
For a curve $s\in C\subset S$ let $\tilde C\subset \cc^2$ denote the preimage of $C$.

If $\lct(S;\Delta_S)<\tfrac17$ then (\ref{lcth.from.h0.prop}.1) gives a surjection
$$
H^0\bigl(S, \OO_S(2K_S)\bigr)\onto \bigl(\cc[x,y]/(x,y)^{13}\bigr)^G.
$$
Thus we get a curve $C\in |2K_S|$ such that $\mult_0\tilde C=12$.
Write any $\Delta_S$ as $\tfrac12 \lambda C+(1-\lambda)\Delta'_S$ where
$\Delta'_S\equiv K_S$ and
 $\Supp(\Delta'_S)$ does not contain $C$. Then we get that
$$
\mult_0\Delta'\cdot  \mult_0\tilde C\leq
\bigl(\Delta'\cdot  \tilde C\bigr)_0\leq 120(\Delta'_S\cdot   C)= 240.
$$
So
$$\mult_0\Delta=\tfrac{\lambda}{2}\cdot\mult_0 \tilde{C}+(1-\lambda)\mult_0\Delta'\leq 20$$
and  $\lct(S;\Delta_S)=\lct_0(\cc^2;\Delta)\geq \tfrac1{10}$ by Lemma~\ref{D.E.lem}.
\qed
\end{say}

The following is
closely related to the notion of weakly-exceptional singularities introduced in  \cite[Sec.5]{sho-3ff}.   

\begin{lem} \label{D.E.lem}
 Let $G\subset \GL_2$ be a non-abelian finite subgroup and
$\Delta$ a $G$-invariant $\q$-divisor on $\cc^2$. Then
$(\cc^2, \Delta)$ is log canonical at $0$ iff $\mult_0\Delta\leq 2$.
\end{lem}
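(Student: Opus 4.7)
The plan proceeds by two separate arguments for the two directions of the equivalence.

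For the necessary direction, I blow up $\cc^2$ at the origin, obtaining $\pi\colon\tilde X\to\cc^2$ with exceptional divisor $E$. The discrepancy formula gives $a(E,\cc^2,\Delta)=1-\mult_0\Delta$, and log canonicity forces $a(E,\cc^2,\Delta)\ge-1$, hence $\mult_0\Delta\le 2$. This direction uses no hypothesis on $G$.

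For the sufficient direction, the key input is that any non-abelian finite subgroup $G\subset\GL_2$ has image in $\PGL_2$ which is not cyclic: were the image cyclic, $G$ would be a central extension of a cyclic group by $G\cap\cc^*\subseteq Z(\GL_2)$ and hence abelian. The non-cyclic finite subgroups of $\PGL_2$ (dihedral, tetrahedral, octahedral, icosahedral) act on $\p^1$ without fixed points, with every orbit of size at least $2$. Consequently $G$ fixes no line through the origin, so every irreducible component $C\subseteq\Supp\Delta$ through $0$ lies in a $G$-orbit $\{C_1,\dots,C_k\}$ with $k\ge 2$, common coefficient $d$, and common multiplicity $\mu:=\mult_0 C_i$; this orbit contributes $dk\mu\le\mult_0\Delta\le 2$, forcing $d\mu\le 1$.

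Setting $m:=\mult_0\Delta\le 2$, I blow up at the origin: the $G$-action on $E\cong\p^1$ has orbits of size $\ge 2$, and combined with $\tilde\Delta\cdot E=m$ this gives $\mult_p\tilde\Delta\le m/2\le 1$ for every $p\in E$. Writing $\pi^*(K_{\cc^2}+\Delta)=K_{\tilde X}+\tilde\Delta+(m-1)E$, log canonicity of the original pair becomes sub-log canonicity of the pulled-back pair, whose boundary has all coefficients $\le 1$ by the above bounds. At points where $\tilde\Delta$ meets $E$ in a single smooth branch transverse to $E$ the configuration is simple normal crossings with $\mult_p\le 2$, so log canonicity is immediate.

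The main obstacle will be points $p\in E$ where several components of $\Delta$ share a common tangent at $0$, producing multiple (and possibly mutually tangent) branches of $\tilde\Delta$ meeting $E$ at a single $p$. Here I plan to iterate the blow-up $G$-equivariantly: the stabilizer $G_p\subseteq G$ permutes the branches at $p$, and the inequalities $\mult_p\tilde\Delta\le m/2$ and $\sum_i d_i\mu_i\le 1$ propagate, so after each blow-up the newly created exceptional divisor has boundary coefficient at most $\mult_p\tilde\Delta+m-2\le 3m/2-2\le 1$ and discrepancy $\ge-1$. Because the tangency complexity strictly decreases at each step, after finitely many blow-ups one reaches a simple normal crossings log resolution with all boundary coefficients $\le 1$, completing the proof. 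An alternative conceptual route is Shokurov's framework of exceptional singularities mentioned in the remark preceding the lemma: non-abelian $G$-invariance forces the unique non-lc center candidate to be the origin, and a Nadel vanishing argument reduces log canonicity to the single discrepancy bound at $E$ computed above.
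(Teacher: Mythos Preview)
Your core insight matches the paper's exactly: blow up once at the origin, observe that the non-abelian image of $G$ in $\PGL_2$ acts on $E\cong\p^1$ without fixed points, and deduce that $\tilde\Delta|_E$ has every local coefficient $\le m/2\le 1$. The paper then finishes in one line: since $(E,\tilde\Delta|_E)$ is log canonical, inversion of adjunction \cite[4.9]{kk-singbook} gives that $(\tilde X,E+\tilde\Delta)$ is log canonical near $E$, and we are done (the case $m<2$ reduces to $m=2$ by scaling).

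Your primary route---iterating blow-ups---is an attempt to reprove this instance of inversion of adjunction by hand. It can be made to work, but your sketch is imprecise on the crucial point. The phrase ``iterate $G$-equivariantly'' and the appeal to $G_p$ permuting branches is a red herring: after the first blow-up the stabilizer $G_p$ is typically cyclic (e.g.\ index~$2$ in the dihedral case), so the orbit-size argument yields nothing new. What actually makes the iteration go through is not symmetry but the \emph{intersection-number} bound $(\tilde\Delta\cdot E)_p\le 1$ (stronger than $\mult_p\tilde\Delta\le 1$, and what your orbit argument really proves). With this in hand, at the node $q=\tilde E\cap E_1$ one has $\mult_q\tilde{\tilde\Delta}\le(\tilde\Delta\cdot E)_p-\mult_p\tilde\Delta$, so the coefficient of the next exceptional is $\le(\tilde\Delta\cdot E)_p\le 1$; away from $\tilde E$ the bound is even easier. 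Tracking intersection numbers rather than multiplicities, the induction closes. As written, though, your formula ``$\mult_p\tilde\Delta+m-2$'' and the claim that inequalities ``propagate'' via $G_p$ do not establish this.

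The ``alternative conceptual route'' you mention at the end---reducing to the single discrepancy at $E$ via adjunction---is precisely what the paper does, and is the clean way to finish.
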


\begin{proof}
 The non-trivial claim is that if  $\mult_0\Delta= 2$ then $(\cc^2, \Delta)$ is log canonical at $0$. To see this blow up the origin to get
$\pi:S\to \cc^2$ with exceptional curve $E$. Then we have
$K_S+E+\Delta'\simr \pi^*\bigl(K_{\cc^2}+\Delta\bigr)$ and $\Delta'|_E$ has degree 2. It is also $G$-invariant and $G$ has no fixed points on $E\cong \p^1$. So each point appears in $\Delta'|_E$ with coefficient $\leq 1$ hence
$\bigl(E, \Delta'|_E\bigr)$ is  log canonical. Thus $(S, E+\Delta')$ is  also log canonical by  adjunction \cite[4.9]{kk-singbook}. \end{proof}

It is remarked by one referee that  the non-equivariant analogue of the above lemma is the following: if
$(S, \Delta)$ is a log pair, and $P$ is a du Val singular point of $S$ that is not of type A, then
$(S, \Delta)$  is log canonical at $P$ if and only if the discrepancy $a(E; S, \Delta) \geq -1$, where $E$ is the $(-2)$-curve on the minimal resolution intersecting three other $(-2)$-curves.

\begin{remark} 1. While the values in Proposition~\ref{mcd.prop} are optimal,
it is not clear how sharp  the bounds in Proposition~\ref{lcth.from.h0.prop} are. It is quite likely that the methods of  Theorem~\ref{lcth.from.h0.cor} can be used to improve them in all cases.
For example, if  $S$ is smooth then we get  much better bounds:
$$
h^0\bigl(S, \OO_S(K_S+H)\bigr)<\tbinom{m}{2} \quad\Rightarrow \quad
\lct(S;\Delta)\geq \tfrac2{m},
$$
so the $1/(2n+1)$ in Proposition \ref{lcth.from.h0.prop}
is replaced roughly by  $\sqrt{2/n}$.

2. The assumption $(H^2)=1$ made the above arguments much simpler.
Without it, for  $n:=h^0(S, K_S+H)\geq 14$ we get
$$
\lct(S;\Delta)\geq \tfrac{1}{(H^2)}\cdot  \tfrac{1}{2n+1},
$$
but it is likely that the $(H^2)$ factor is not needed.
\end{remark}

\section*{\bf Acknowledgment}
We would like to thank Fabrizio Catanese, Christopher D. Hacon, Yong Hu, Yujiro Kawamata, J\'anos Koll\'ar, Yongnam Lee, and De-Qi Zhang for effective discussions and comments during the long period of the preparation of this paper.
Part of this paper was written during C. Jiang's visit,  in August and December 2017, to Fudan University for which he is grateful for the support and hospitality.  The second author is partially supported as a member of LMNS, Fudan University. We are grateful to  J\'anos Koll\'ar who is so kind to send us his note (Appendix A), which greatly improves and simplifies the proof of ``$\glct\geq 1/13$'' in the earlier draft of this paper. We are grateful to the referees for many useful comments and suggestions.

\end{document}